\documentclass[a4paper,16pt]{article}
\usepackage{amsmath}
\usepackage{amssymb}
\usepackage{amsthm}
\usepackage{geometry}
\usepackage{mathrsfs}
\usepackage{cite}
\usepackage[pdftex]{hyperref}
\usepackage{enumerate}
\def\N{\mathbb{N}}
\def\R{\mathbb{R}}
\def\Z{\mathbb{Z}}
\def\bP{\mathbb{P}}
\def\E{\mathbb{E}}

\def\T{\mathbb{T}}
\def\F{\mathscr{F}}
\def\vep{\varepsilon}
\usepackage[pdftex]{hyperref}
\hypersetup{CJKbookmarks=true}

\theoremstyle{plain}
\newtheorem{theorem}{Theorem}[section]

 \newtheorem{corollary}[theorem]{Corollary}

 \newtheorem{proposition}[theorem]{Proposition}

 \newtheorem{lemma}[theorem]{Lemma}

\newtheorem{remark}{Remark}[section]
 \newtheorem{conj}[theorem]{Conjecture}

  \makeatletter
  \@addtoreset{equation}{section}
  \makeatother

\title{The extremal process of two-speed branching random walk}
\author{Lianghui Luo \footnote{Institut de Math\'ematiques de Toulouse, UMR5219. Universit\'e de Toulouse; CNRS. UPS, F-31062 Toulouse Cedex 9, France. E-mail: lianghui.luo@math.univ-toulouse.fr}}

\begin{document}
\maketitle
\begin{abstract}
 We consider a two-speed branching random walk, which consists of two macroscopic stages with different reproduction laws. We prove that the centered maximum converges in law to a Gumbel variable with a random shift and the extremal process converges in law to a randomly shifted decorated Poisson point process, which can be viewed as a discrete analog for the corresponding results for the two-speed branching Brownian motion, previously established by Bovier and Hartung \cite{bovier}.
\end{abstract}
\bigskip
\noindent\textbf{Keywords:} Branching random walk; extremal process; time-inhomogeneous environment.\\
\noindent\textbf{MSC2020 subject classification:} 60F05,60G70,60J80.
\section{Introduction}
A branching random walk on real line $\R$ can be described as follows. At time 0, one particle is located at 0. At time 1, the particle dies and produces its children according to an independent copy of some point process $\mathcal{L}$. Each of children repeats the behavior of its parent independently. For each particle $u$, we write $|u|$ and $V(u)$ for its generation and position respectively. We call $(V(u):|u|<\infty)$ a branching random walk with reproduction law $\mathcal{L}$.

Under some mild integrability conditions, the asymptotic behavior of the maximal displacement $M_n:=\max_{|u|=n}V(u)$ has been well studied. Let $\kappa(\theta):=\log \E(\sum_{\ell\in \mathcal{L}}e^{\theta \ell})$ be the log-Laplace transform of point process $\mathcal{L}$. As $\kappa(\theta)<\infty$ for some $\theta>0$ and $\kappa(0)>0$, Hammersley\cite{hammersley}, Kingman\cite{kingman} and Biggins\cite{biggins} proved that on the non-extinction set, almost surely,
\[\frac{M_n}{n}\to \inf_{\theta>0}\frac{\kappa(\theta)}{\theta}=:x^*, \quad n\to\infty.\]
Moreover, if there exists some $\theta^*>0$ such that $\kappa'(\theta^*)=x^*$, i.e. $\theta^*\kappa'(\theta^*)=\kappa(\theta^*)$, the logarithmic correction was studied by Hu and Shi\cite{hu}, Addario-Berry and Reed\cite{berry}, and Bramson and Zeitouni\cite{bramson}. They showed that
\[M_n-\kappa'(\theta^*)n+\frac{3}{2\theta^*}\log n\]
is tight conditionally on the non-extinction set.
Finally, A\"id\'ekon\cite{aidekon2} obtained that $M_n-\kappa'(\theta^*)n+\frac{3}{2\theta^*}\log n$ converges in law to a random shift of a Gumbel variable.

The extremal process is a point process formed by $(V(u)-\kappa'(\theta^*)n+\frac{3}{2\theta^*}\log n:|u|=n)$. In the context of branching Brownian motion, Arguin et al.\cite{arguin} and A\"id\'ekon et al.\cite{aidekon1} proved independently the extremal process converges in law to a randomly shifted decorated Poisson point process, and \cite{aidekon1} gives a probabilistic representation of the decoration. For a branching random walk, Madaule\cite{madaule} showed the weak convergence of the extremal process. In Section \ref{sD}, we will introduce these results in detail.

  In recent years,  time-inhomogeneous branching random walk has received a lot of attention, where time-inhomogeneous means that the reproduction law of individuals depend on time. More precisely, fix $M\in \N$, and consider the parameters
\[0=t_0<t_1<\ldots<t_M=1\]
and a collection of point processes $(\mathcal{L}_i:1\leq i\leq M)$. Fix $n\in\N$, at time $0$, here is one individual located at $0$. For $0\leq i\leq M-1$, between time $\lfloor t_in\rfloor$+1 and $\lfloor t_{i+1} n\rfloor$, individuals reproduce their children, which are scattered according to independent copies of the reproduction law $\mathcal{L}_i$, with respect to their parent. We write $V^{(n)}(u)$ for the position of particle $u$ such that $|u|\leq n$ and $M_n^{(n)}:=\max_{|u|=n}V^{(n)}(u)$ for the maximal position of particles at time $n$. When the law of displacement is Gaussian, the first two orders were obtained by Fang and Zeitouni \cite{fang} in the case $M=2$. Under some restriction, Ouimet\cite{Ouimet2} showed the logarithm corrections for $M\geq 2$ by generalizing the approach in \cite{fang}. Under close to optimal conditions, Mallein\cite{bastien} showed the tightness of the centered maximum using spinal decomposition of a time-inhomogeneous branching random walk. In particular, \cite{fang} and \cite{bastien} showed that the log-correction has a phase transition when the law of the second environment is modified.

In this paper, we focus on the case $M=2$ and $t=t_1$, and call $(V^{(n)}(u):|u|\leq n)_{n\geq 0}$ a two-speed branching random walk, namely a time-homogeneous branching random walk. Let $\kappa_i(\theta):=\log \E(\sum_{\ell\in \mathcal{L}_i}e^{\theta \ell})$ be the log-Laplace transform of point process $\mathcal{L}_i$, $i=1,2,$ respectively. Note that $\kappa_i$ is convex and smooth on the interior of $\{\theta\in\R:|\kappa_i(\theta)|<\infty\}.$ Under some integrability conditions, when there exist $\theta_1^*,\theta_2^*>0$ such that
\[\theta_i^*\kappa_i'(\theta_i^*)-\kappa_i(\theta_i^*)=0,\quad i=1,2,\]
and $\theta>0$ such that
\[t[\theta\kappa_1'(\theta)-\kappa_1(\theta)]+(1-t)[\theta\kappa_2'(\theta)-\kappa_2(\theta)]=0,\]
Mallein\cite{bastien} defined three distinct regimes for this model, and proved the tightness of the maximum in each of regimes. The regimes can informally be identified by the sign of $\theta_1^*-\theta_2^*$, being the slow regime if this quantity is negative, mean regime if null or fast regime if positive. However, their characterization does not necessarily relies on the existence of these parameters. More precisely, he proved that $M_n^{(n)}-m_n$ is tight, where
\begin{equation}\label{three}
m_n:=\left\{
         \begin{array}{ll}
           \kappa_1'(\theta_1^*)\lfloor tn\rfloor+\kappa_2'(\theta_2^*)(n-\lfloor tn\rfloor)-(\frac{3}{2\theta_1^*}+\frac{3}{2\theta_2^*})\log n, & \hbox{\text{in the slow regime};} \\
           \kappa_1'(\theta)\lfloor tn\rfloor+\kappa_2'(\theta)(n-\lfloor tn\rfloor)-\frac{3}{2\theta}\log n, & \hbox{\text{in the mean regime};} \\
            \kappa_1'(\theta)\lfloor tn\rfloor+\kappa_2'(\theta)(n-\lfloor tn\rfloor)-\frac{1}{2\theta}\log n, & \hbox{\text{in the fast regime}.}
         \end{array}
       \right.
\end{equation}

In this paper, we consider a two-speed branching random walk $(V^{(n)}(u):|u|\leq n)_{n\geq 0}$ in slow regime or fast regime, and aim to prove the weak convergence of the centered maximum $M^{(n)}_n-m_n$ and extremal process $\mathcal{E}_n:=\sum_{|u|=n}\delta_{V^{(n)}(u)-m_n}$. In the context of two-speed branching Brownian motions, which is a family of branching Brownian motions with inhomogeneous variance, Bovier and Hartung\cite{bovier} proved the weak convergence of the maximum position and the extremal process. Moreover, for a variable speed branching Brownian motion, Bovier and Hartung \cite{bovier1} showed that the limiting extremal process were those that emerged in the two-speed case. For more discussions, see \cite{fang1}, \cite{bovier1}, \cite{pascal}, \cite{bovier2} and \cite{Alban}.
%At time 0, one particle is located at 0. From time 0 to $nt$, each of particle gives birth according to independent copies of $\mathcal{L}_1$. From time $nt+1$ to $n$, each of particle spawns according to independent copies of $\mathcal{L}_2$, where for the simplicity of notation, we always assume $nt$ is an integer.

We assume throughout the rest of the paper the two-speed branching random walk survives almost surely for all times and is supercritical, i.e.
\[\bP(\mathcal{L}_i(\R)=0)=0\quad\text{and}\quad \bP(\mathcal{L}_i(\R)=1)<1,\quad i=1,2.\]

\subsection{Fast regime}
We first introduce some basic assumptions in the fast regime. For $\theta>0$ such that $\kappa(\theta)<\infty$, we denote by
\[\kappa'(\theta):=\E\left(\sum_{\ell\in\mathcal{L}}\ell e^{\theta\ell-\kappa(\theta)}\right)\]
if the expectation in the right hand side is well-defined. Note that if $\kappa$ is derivative at point $\theta$, then its derivative is actually $\kappa'(\theta)$, justifying the notation.
 We give four assumptions on $\mathcal{L}$ and $\theta>0$ such that $\kappa(\theta)<\infty$. We assume that
\begin{equation}\label{as1}
\kappa''(\theta):=\E\left(\sum_{\ell\in\mathcal{L}}(\ell-\kappa'(\theta))^2e^{\theta\ell-\kappa(\theta)}\right)\in(0,\infty),
\end{equation}
where if the second derivative of $\kappa$ at point $\theta$ exists and is finite, then its second derivative is $\kappa''(\theta)$, justifying the notation.
 And we also assume that
\begin{equation}\label{as3}
\E\left(\sum_{\ell\in\mathcal{L}}e^{\theta\ell}\log_+\left(\sum_{\ell\in\mathcal{L}}e^{\theta \ell}\right)\right)<\infty,
\end{equation}
where $\log_+(x):=\log (\max\{x,1\})$, for $x\geq 0$.
Finally, we assume that for any $a,b\in\R$,
\begin{equation}\label{as4}
\bP(\ell\in a+b \Z,\forall \ell \in\mathcal{L})<1,
\end{equation}
i.e. $\mathcal{L}$ is non-lattice, where $a+b\Z:=\{a+bz:z\in\Z\}.$

We first present our main result in the fast regime.

\begin{theorem}[Fast regime]\label{th1}
Fix $t\in(0,1)$. Assume that there exists $\theta>0$ such that point processes $\mathcal{L}_1$ and $\mathcal{L}_2$ satisfy assumptions (\ref{as1}), (\ref{as3}), (\ref{as4}) and
\begin{equation}\label{as2}
\kappa_1(\theta)>\theta\kappa'_1(\theta)\quad\text{and}\quad t[\theta\kappa_1'(\theta)-\kappa_1(\theta)]+(1-t)[\theta\kappa_2'(\theta)-\kappa_2(\theta)]=0.
\end{equation} Set
\[m_n:=\frac{\kappa_1(\theta)}{\theta}\lfloor tn\rfloor+\frac{\kappa_2(\theta)}{\theta}(n-\lfloor tn\rfloor)-\frac{1}{2\theta}\log n,\quad n\in\N.\]
Then
\begin{enumerate}[(i)]
  \item there exists some constant $C_{\mathrm{f}}>0$ defined in $(\ref{fast})$ such that for any $y\in\R$,
\begin{equation}\label{re1}
\lim_{n\to\infty}\bP(M_n^{(n)}-m_n\leq y)=\E\left[e^{-C_{\mathrm{f}}W^{(1)}(\theta)e^{-\theta y}}\right],
\end{equation}
where $W^{(1)}(\theta)$ is a positive random variable whose law only depends on $\theta$ and $\mathcal{L}_1$.
  \item The point process
  \begin{equation}
  \mathcal{E}_n:=\sum_{|u|=n}\delta_{V^{(n)}(u)-m_n}
  \end{equation}
  converges in law to a well defined point process $\mathcal{E}$ in the sense of the topology of vague convergence, where
  \begin{equation}\label{limp}
  \mathcal{E}:=\sum_{k,i\geq 1}\delta_{p_k+d_i^{(k)}},
  \end{equation}
  with $p_k$ being the $k$-th atom of a Poisson point process with intensity measure
\[C_{\mathrm{f}}W^{(1)}(\theta)\theta e^{-\theta y}dy,\]
 and $(d_i^{(k)}:i\geq 1)$ being the atoms of independent and identically distributed point processes $\mathcal{D}^{(2,k)}$ (called decorations), whose law only depends on $\mathcal{L}_2$ and $\theta$.
\end{enumerate}
\end{theorem}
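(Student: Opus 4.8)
The plan is to establish the point-process convergence (ii) through Laplace functionals and then to deduce the maximum (i) as a special case. Fix a nonnegative continuous $\phi$ whose support is bounded from the left. I would compute $\E[\exp(-\langle\mathcal{E}_n,\phi\rangle)]$ and show it converges to the Laplace functional of the decorated Poisson point process in $(\ref{limp})$, namely
\[
\E\Big[\exp\Big(-C_{\mathrm f}W^{(1)}(\theta)\int_{\R}\big(1-\E_{\mathcal{D}^{(2)}}[e^{-\sum_i\phi(y+d_i)}]\big)\,\theta e^{-\theta y}\,dy\Big)\Big].
\]
First I would split the genealogy at generation $\lfloor tn\rfloor$. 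Writing $\mathcal{F}$ for the $\sigma$-field generated by the first phase, conditionally on $\mathcal{F}$ the descendants of each $v$ with $|v|=\lfloor tn\rfloor$ form independent copies of a single-speed branching random walk with reproduction law $\mathcal{L}_2$ run for $s:=n-\lfloor tn\rfloor$ steps and translated by $V^{(n)}(v)$. Hence $\E[\exp(-\langle\mathcal{E}_n,\phi\rangle)\mid\mathcal{F}]=\prod_{|v|=\lfloor tn\rfloor}(1-f_n(V^{(n)}(v)))$, where $f_n(x):=\E[1-\exp(-\sum_j\phi(X_j+x-m_n))]$ and $(X_j)$ are the positions of the phase-two walk at time $s$. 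Since I will check that $\max_v f_n(V^{(n)}(v))\to0$, it suffices to analyse $\sum_{|v|=\lfloor tn\rfloor}f_n(V^{(n)}(v))$ and to exponentiate.

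The heart of the argument is a sharp estimate for $f_n(x)$. Assumption (\ref{as2}) forces $\theta\kappa_2'(\theta)-\kappa_2(\theta)>0$ (because the first bracket is negative), i.e. $\theta>\theta_2^*$, so reaching the level $m_n-x$ is a large-deviation event for the second phase: after the exponential tilt at $\theta$ in the many-to-one lemma the spine runs at the atypical velocity $\kappa_2'(\theta)$, and a local central limit theorem (using (\ref{as1}) for finite variance and (\ref{as4}) to rule out lattice obstructions) produces the prefactor $s^{-1/2}$ responsible for the $\tfrac1{2\theta}\log n$ correction. I would prove that, uniformly for $x=\kappa_1'(\theta)\lfloor tn\rfloor+O(\sqrt n)$ with $\Delta:=x-\kappa_1'(\theta)\lfloor tn\rfloor$,
\[
f_n(x)=(1+o(1))\,e^{\theta x-\kappa_1(\theta)\lfloor tn\rfloor}\,\frac{\sqrt n}{\sqrt{2\pi\kappa_2''(\theta)s}}\,e^{-\Delta^2/(2\kappa_2''(\theta)s)}\,\psi(\phi),
\]
where $\psi(\phi):=\int_{\R}(1-\E_{\mathcal{D}^{(2)}}[e^{-\sum_i\phi(w+d_i)}])\,\theta e^{-\theta w}\,dw$ and the decoration $\mathcal{D}^{(2)}$ is the weak limit as $s\to\infty$ of the second-phase process seen from its tip, conditionally on the big-jump event. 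The prefactors combine using $e^{-\theta m_n}=\sqrt n\,e^{-\kappa_1(\theta)\lfloor tn\rfloor-\kappa_2(\theta)s}$ together with the exact cancellation of linear terms guaranteed by (\ref{as2}), which is precisely why reaching $m_n$ in the window above is balanced.

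With this in hand I would assemble the first phase. Since $\kappa_1(\theta)>\theta\kappa_1'(\theta)$ gives $\theta<\theta_1^*$, assumption (\ref{as3}) is the $X\log X$ condition guaranteeing that the additive martingale $W^{(1)}_k(\theta):=\sum_{|v|=k}e^{\theta V^{(n)}(v)-k\kappa_1(\theta)}$ is uniformly integrable and converges almost surely to the positive limit $W^{(1)}(\theta)$. Then $\sum_v f_n(V^{(n)}(v))=\psi(\phi)\tfrac{\sqrt n}{\sqrt{2\pi\kappa_2''(\theta)s}}\sum_v e^{\theta V^{(n)}(v)-\kappa_1(\theta)\lfloor tn\rfloor}e^{-\Delta_v^2/(2\kappa_2''(\theta)s)}$, and a Biggins-type central limit theorem for the tilted empirical measure—asserting that $\sum_v e^{\theta V^{(n)}(v)-\kappa_1(\theta)\lfloor tn\rfloor}g(\Delta_v/\sqrt n)$ converges, jointly with $W^{(1)}(\theta)$, to $W^{(1)}(\theta)\,\E[g(\mathcal{N}(0,t\kappa_1''(\theta)))]$—turns this into $C_{\mathrm f}W^{(1)}(\theta)\psi(\phi)$, with $C_{\mathrm f}$ the explicit Gaussian-convolution constant of the two centred Gaussians of variances $t\kappa_1''(\theta)$ and $(1-t)\kappa_2''(\theta)$ (this is the constant announced in $(\ref{fast})$). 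Exponentiating, discarding the linearization error via $\max_v f_n(V^{(n)}(v))\to0$, and applying dominated convergence in the first-phase randomness yields the stated Laplace functional, proving (ii). For (i) I would take $\phi=\beta\mathbf 1_{(y,\infty)}$ and let $\beta\to\infty$, so that $\langle\mathcal{E}_n,\phi\rangle\to\infty$ exactly when $M_n^{(n)}-m_n>y$; the identity $\theta\int_{\R}\bP(\max_i d_i>y-w)e^{-\theta w}\,dw=e^{-\theta y}c_{\mathcal D}$ then collapses the limit to $\E[\exp(-C_{\mathrm f}W^{(1)}(\theta)e^{-\theta y})]$ after absorbing $c_{\mathcal D}$ into the constant, giving (\ref{re1}).

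The main obstacle will be the second-phase estimate: establishing the sharp $s^{-1/2}$ asymptotics uniformly in $x$ over the relevant window, together with the existence and characterization of the decoration $\mathcal{D}^{(2)}$ as the limiting top-cluster of a branching random walk at the \emph{supercritical} parameter $\theta>\theta_2^*$ (a one-big-jump regime, in contrast to the critical clusters of Madaule and A\"id\'ekon). This requires a precise large-deviation analysis of the phase-two tip, the associated cluster convergence, and enough tightness to discard atypical first-phase founders and to pass from the conditional product to its exponential; the Biggins central limit theorem with joint convergence to $W^{(1)}(\theta)$ is the other nontrivial ingredient.
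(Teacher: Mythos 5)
Your proposal is correct and follows essentially the same route as the paper: Laplace functionals, a split at generation $\lfloor tn\rfloor$ with the first-phase ancestors confined to an $O(\sqrt{n})$ window around $\kappa_1'(\theta)\lfloor tn\rfloor$, sharp one-big-jump asymptotics (with decoration) for the second phase since $\theta\kappa_2'(\theta)>\kappa_2(\theta)$, and a Biggins-type central limit theorem for the $\theta$-tilted first-phase measure producing the Gaussian-convolution constant $C_{\mathrm{f}}$ and the factor $W^{(1)}(\theta)$. The two ingredients you flag as the main obstacles are exactly what the paper supplies: the second-phase tail and conditional-decoration estimates are imported from \cite{luo}, and the tilted CLT is proved in Section \ref{adm} (Theorem \ref{prop1}) by a truncation and spinal-decomposition argument, precisely because the versions in the literature require the extra moment condition $\E(W_1(\theta)^{1+\gamma})<\infty$, which is not among the hypotheses (\ref{as1}), (\ref{as3}) of the theorem.
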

\begin{remark}\label{another}
 With the assumption $t[\theta\kappa_1'(\theta)-\kappa_1(\theta)]+(1-t)[\theta\kappa_2'(\theta)-\kappa_2(\theta)]=0$, it is not hard to see that
\begin{equation}\label{mn}
m_n-\left[\kappa_1'(\theta)\lfloor tn\rfloor+\kappa_2'(\theta)(n-\lfloor tn\rfloor)-\frac{1}{2\theta}\log n\right]=\frac{tn-\lfloor tn\rfloor}{1-t}\left(\kappa'_1(\theta)-\frac{\kappa_1(\theta)}{\theta}\right),
\end{equation}
where the right hand side is a bounded sequence.
\end{remark}
Theorem \ref{th1} is an analog of Theorem 1.2 in \cite{bovier} for branching Brownian motion. To be more precise, in the fast regime, (\ref{re1}) tells us that the centered maximum converges in law a Gumbel variable with a random shift, where $W^{(1)}(\theta)$ is the almost surely limit of the additive martingale (defined in forthcoming  (\ref{am})) of a branching random walk $(V_1(u):|u|<\infty)$ with reproduction law $\mathcal{L}_1$. Because $\mathcal{L}_1$ verifies $\theta\kappa_1'(\theta)<\kappa_1(\theta)$ and assumption $(\ref{as3})$, according to \cite{bigginsa}, we have $\bP(W^{(1)}(\theta)=0)=0$, which implies that the limit distribution of the centered maximum is non-degenerate. Moreover, we show that the extremal process converges in law to a randomly shifted decorated Poisson point process, where the decoration is the limit in law of
\begin{equation}\label{eqex}
\sum_{|u|=n}\delta_{V_2(u)-\max_{|u|=n}V_2(u)}
\end{equation}
conditionally on $\{\max_{|u|=n}V_2(u)\geq \kappa_2'(\theta)n\}$, where $(V_2(u):|u|<\infty)$ is a branching random walk with reproduction law $\mathcal{L}_2$. The existence and probabilistic representation of the decoration have been shown in \cite{luo}. We will introduce in more details these quantities in Section \ref{pf1}.

The main idea of the proof is to compute the asymptotic behavior of $\E(e^{-\mathcal{E}_n(f)})$
for all continuous bounded function $f:\R\to [0,\infty)$ whose support has a finite left bound,
where $\mu(f):=\int_{\R}f(x)\mu(\mathrm{d}x)$.
By Markov property at time $\lfloor tn\rfloor$, we can write $\E(e^{-\mathcal{E}_n(f)})=\E(e^{-\mathcal{E}^{(1)}_{\lfloor tn\rfloor}(\phi_{n-\lfloor tn\rfloor}[f])}),$
where $\mathcal{E}_{\lfloor tn\rfloor}^{(1)}:=\sum_{|u|=\lfloor tn\rfloor}\delta_{V_1(u)-\kappa'_1(\theta)\lfloor tn\rfloor}$ and $\phi_{n-\lfloor tn\rfloor}[f](x)=-\log \E[e^{-\mathcal{E}^{(2)}_{n-\lfloor tn\rfloor}(f(\cdot+x))}]$ , where
\[\mathcal{E}^{(2)}_{ n-\lfloor tn\rfloor} :=\sum_{|u|= n-\lfloor tn\rfloor}\delta_{V_2(u)-\kappa'_2(\theta)( n-\lfloor tn\rfloor)+\frac{1}{2\theta}\log n-\frac{tn-\lfloor tn\rfloor}{1-t}\left(\kappa'_1(\theta)-\frac{\kappa_1(\theta)}{\theta}\right)}.\]
 The estimations of $\phi_{n-\lfloor tn\rfloor}[f]$ can be obtained by the results in \cite{luo}, then we are able to compute the asymptotic behavior of $\E(e^{-\mathcal{E}^{(1)}_{\lfloor tn\rfloor}(\phi_{n-\lfloor tn\rfloor}[f])})$.
\subsection{Slow regime}
We first introduce some basic assumptions for the weak convergence of centered maximum of a branching random walk with reproduction law $\mathcal{L}$.
 We assume that there exists $\theta^*>0$ such that
\begin{equation}\label{as6}
\kappa(\theta^*)<\infty,\quad\theta^*\kappa'(\theta^*)=\kappa(\theta^*),
\end{equation}
and
\begin{equation}\label{as7}
\E\left(\sum_{\ell\in\mathcal{L}}(\ell-\kappa'(\theta^*))^2e^{\theta^*\ell-\kappa(\theta^*)}\right)\in(0,\infty).
\end{equation}
And we also assume that
\begin{equation}\label{as8}
\E(X(\log_+X)^2)<\infty\quad\text{and}\quad\E(\tilde{X}\log_+\tilde{X})<\infty,
\end{equation}
where
\[X:=\sum_{\ell\in\mathcal{L}}e^{\theta^*\ell}\quad\text{and}\quad \tilde{X}:=\sum_{\ell\in\mathcal{L}}\max\{-\ell,0\}e^{\theta^* \ell}.\]
\begin{theorem}[Slow regime]\label{th2}
Fix $t\in(0,1)$. Assume that $\mathcal{L}_1,\mathcal{L}_2$ are non-lattice and that there exist $\theta_1^*,\theta_2^*>0$ satisfying $\theta_1^*<\theta_2^*$ and such that $\mathcal{L}_1$ and $\mathcal{L}_2$ satisfy assumptions (\ref{as6}), (\ref{as7}) and (\ref{as8}). Let $m_n:=\kappa_1'(\theta_1^*) \lfloor tn\rfloor+\kappa_2'(\theta_2^*)(n-\lfloor tn\rfloor)-\frac{3}{2\theta_1^*}\log(\lfloor tn\rfloor)-\frac{3}{2\theta_2^*}\log(n-\lfloor tn\rfloor)$ for $n\in\N$. Then
\begin{enumerate}[(i)]
\item there exists some constant $C_{\mathrm{s}}>0$ defined in (\ref{eq6.1}) such that for any $y\in\R$,
\begin{equation}\label{eqth2}
\lim_{n\to\infty}\bP(M_n^{(n)}-m_n\leq y)=\E(e^{-C_{\mathrm{s}}Z^{(1)}e^{-\theta_1^* y}})
\end{equation}
where $Z^{(1)}$ is a positive random variable whose law only depends on $\mathcal{L}_1$.
\item The extremal process
\[\mathcal{E}_n:=\sum_{|u|=n}\delta_{V^{(n)}(u)-m_n}\]
converges in law to a well-defined point process $\mathcal{E}$.
\end{enumerate}
\end{theorem}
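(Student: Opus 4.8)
The plan is to identify the limit through its Laplace functional $\E(e^{-\mathcal{E}_n(f)})$, for nonnegative continuous $f$ whose support is bounded on the left, following the scheme described above for the fast regime but now with \emph{both} stages sitting at their own critical parameter. Write $m^{(i)}_k:=\kappa_i'(\theta_i^*)k-\tfrac{3}{2\theta_i^*}\log k$, so that $m_n=m^{(1)}_{\lfloor tn\rfloor}+m^{(2)}_{n-\lfloor tn\rfloor}$, and set
\[
\mathcal{E}^{(1)}_{\lfloor tn\rfloor}:=\sum_{|u|=\lfloor tn\rfloor}\delta_{V_1(u)-m^{(1)}_{\lfloor tn\rfloor}},\qquad \mathcal{E}^{(2)}_{m}:=\sum_{|u|=m}\delta_{V_2(u)-m^{(2)}_{m}}.
\]
Applying the branching property at time $\lfloor tn\rfloor$ gives $\E(e^{-\mathcal{E}_n(f)})=\E(e^{-\mathcal{E}^{(1)}_{\lfloor tn\rfloor}(\psi_n[f])})$, where $\psi_n[f](x):=-\log\E(e^{-\mathcal{E}^{(2)}_{n-\lfloor tn\rfloor}(f(\cdot+x))})$.

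The two ingredients are the single-speed results under (\ref{as6})--(\ref{as8}). For each $i$, Madaule \cite{madaule} together with the probabilistic representation of \cite{luo} gives that $\mathcal{E}^{(i)}_m$ converges in law to a decorated Poisson point process with intensity $C_i Z^{(i)}\theta_i^*e^{-\theta_i^* x}\,dx$ and decoration $\mathcal{D}^{(i)}$, while the centered maximum converges to the associated randomly shifted Gumbel variable (\cite{aidekon2}). I would first use the stage-two statement to obtain the pointwise limit $\psi_n[f]\to\psi[f]$ and, more importantly, two-sided control of $\psi_n[f]$ uniform in $n$: as $x\to+\infty$ the $-\log$ tames the growing mass and $\psi_n[f](x)$ grows at most linearly, whereas as $x\to-\infty$ the upper tail $\bP(\max\mathcal{E}^{(2)}_m>z)\lesssim e^{-\theta_2^* z}$ of the stage-two maximum yields $\psi_n[f](x)\lesssim e^{\theta_2^* x}$. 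The decisive point is that the stage-one intensity carries the weight $e^{-\theta_1^* x}$ and $\theta_1^*<\theta_2^*$, so that $\int_{\R}\psi_n[f](x)\,e^{-\theta_1^* x}\,dx$ is finite uniformly in $n$; this integrability at $-\infty$ is precisely the analytic signature of the slow regime.

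With these bounds I would pass to the limit in $\E(e^{-\mathcal{E}^{(1)}_{\lfloor tn\rfloor}(\psi_n[f])})$ using the decorated-Poisson convergence of $\mathcal{E}^{(1)}$, obtaining
\[
\E(e^{-\mathcal{E}(f)})=\E\Big(\exp\Big(-C_1 Z^{(1)}\!\int_{\R}\theta_1^*e^{-\theta_1^* x}\big(1-\E\,e^{-\mathcal{D}^{(1)}(\psi[f](\cdot+x))}\big)\,dx\Big)\Big),
\]
which exhibits $\mathcal{E}$ as a randomly shifted decorated Poisson point process of rate $\theta_1^*$ whose decoration is obtained by grafting, onto each atom of the stage-one decoration $\mathcal{D}^{(1)}$, an independent copy of the limiting stage-two extremal process. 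For part (i) I would instead take $g_n(x):=-\log\bP(\max\mathcal{E}^{(2)}_{n-\lfloor tn\rfloor}\le y-x)$, so that $\bP(M_n^{(n)}-m_n\le y)=\E(e^{-\mathcal{E}^{(1)}_{\lfloor tn\rfloor}(g_n)})$; the same analysis applies, and after the substitution $w=y-x$ the factor $e^{-\theta_1^* y}$ comes out and the stage-two randomness collapses into a single finite constant $C_{\mathrm{s}}$, giving the limit $\E(e^{-C_{\mathrm{s}}Z^{(1)}e^{-\theta_1^* y}})$, manifestly depending on $\mathcal{L}_1$ only.

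The main obstacle lies entirely in the control of $\psi_n[f]$ (and of $g_n$). Upgrading the weak stage-two convergence to the uniform-in-$n$ tail bound $\psi_n[f](x)\lesssim e^{\theta_2^* x}$ as $x\to-\infty$ requires quantitative, generation-uniform estimates on the upper tail of the critical stage-two maximum, together with a truncation argument discarding the atypically high excursions produced by the abundant stage-one particles lying far below the front; this is exactly where assumption (\ref{as8}) and the strict inequality $\theta_1^*<\theta_2^*$ enter. Working directly with $g_n$ for part (i) has the advantage of computing the maximum without appealing to vague convergence, thereby bypassing the separate no-mass-escape estimate that would otherwise be needed to transfer the convergence of $\mathcal{E}_n$ to the top statistic.
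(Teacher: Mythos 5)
Your plan follows the same route as the paper: cut at time $\lfloor tn\rfloor$, feed the stage-two Laplace functional $\psi_n[f]$ into the stage-one extremal process, and combine the single-speed results of \cite{aidekon2} and \cite{madaule} with the inequality $\theta_1^*<\theta_2^*$ (your direct treatment of (i) via $g_n$ is essentially the paper's identity $\bP(\max\mathcal{E}\le y)=\E(e^{-\mathcal{E}^{(1)}(w_y)})$, run at finite $n$). However, there is a genuine gap at what you call the decisive point. The weight $e^{-\theta_1^*x}\,dx$ is the intensity of the \emph{limiting} process $\mathcal{E}^{(1)}$ (given $Z^{(1)}$), not of the finite-$n$ process $\mathcal{E}^{(1)}_{\lfloor tn\rfloor}$. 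By the many-to-one formula at the critical parameter, the expected number of stage-one particles at depth $a$ below $m^{(1)}_{\lfloor tn\rfloor}$ is of order $\lfloor tn\rfloor\, e^{\theta_1^* a}$: the exponential tilt contributes $n^{3/2}$ while the local CLT only gives $n^{-1/2}$, and the expectation is carried by rare events --- the same reason the critical additive martingale has mean one yet converges to zero almost surely. Consequently, a union bound or first-moment truncation of the contribution of stage-one particles below level $-B$ produces a bound of order $\lfloor tn\rfloor\, e^{-(\theta_2^*-\theta_1^*)B}$, which diverges with $n$ for every fixed $B$. So the uniform integrability of $\psi_n[f]$ against $e^{-\theta_1^*x}\,dx$ does not control the pre-limit processes, and your proposed truncation ``discarding the atypically high excursions produced by the abundant stage-one particles lying far below the front'' cannot be executed on first moments alone.

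The missing ingredient is a path-level barrier (ballot-type) estimate on the stage-one trajectories, which is exactly the content of the paper's Lemma \ref{lemslow}: with probability at least $1-C(1+B)e^{-\theta_1^* B}$ every stage-one path stays below a logarithmically bent barrier $f^{(n)}_k+B$, and \emph{under this path constraint} the expected number of particles ending at depth $a$ below $m^{(1)}_{\lfloor tn\rfloor}$ is of order $(1+B)(1+B+a)e^{\theta_1^* a}$, with the factor $n$ removed (Lemmas 4.3--4.5 of \cite{bastien}). Only after this reduction does the sum over depths of $(\cdots)e^{\theta_1^* a}\cdot e^{-\theta_2^* a}$ converge uniformly in $n$; this is where $\theta_1^*<\theta_2^*$ genuinely enters (assumption (\ref{as8}), by contrast, is needed for the single-speed limit theorems themselves, not for this truncation). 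Two smaller omissions: since $\psi_n[f]$ depends on $n$ and has unbounded support, vague convergence of $\mathcal{E}^{(1)}_{\lfloor tn\rfloor}$ cannot be invoked directly; the paper first localizes to the compact window $\{|x|\le B\}$, upgrades the stage-two convergence to a uniform (Polya-type) convergence of Laplace functionals (Lemma \ref{lemslow2}), and then sandwiches the resulting discontinuous test function between compactly supported continuous ones before applying Kallenberg's theorem. Finally, finiteness of $C_{\mathrm{s}}$ is not automatic from your substitution $w=y-x$; it again rests on the tail comparison $-\log w^{(2)}(x)\lesssim e^{-\bar\theta x}$ for some $\bar\theta\in(\theta_1^*,\theta_2^*)$ together with $\mathcal{E}^{(1)}(e^{\bar\theta\cdot})<\infty$ a.s. (the paper's Lemma \ref{lemslow1}).
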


Theorem \ref{th2} extends the results in Theorem 1.3 in \cite{bovier} to two-speed branching random walk. It is a direct application of \cite{aidekon2} and \cite{madaule}. In fact, $\bar{Z}$ is the limit of the derivative martingale $Z_n^{(1)}$ (defined in Section \ref{sD}) of a branching random walk $(V_1(u):|u|<\infty)$ with reproduction law $\mathcal{L}_1$. In Section \ref{proofofth2}, we will show that the limit point process $\mathcal{E}$ can be given by
\begin{equation}\label{eqth21}
\mathcal{E}:=\sum_{i,j\geq 1}\delta_{e_i+e_j^{(i)}}
\end{equation}
where $(e_i:i\geq 1)$ and $(e_j^{(i)}:j\geq 1)$ are the atoms of the point process $\mathcal{E}^{(1)}$ and $\mathcal{E}^{(2,i)}$, where $\mathcal{E}^{(1)}$ is the limit in law of the extremal process $\mathcal{E}_n^{(1)}:=\sum_{|u|=n}\delta_{V_1(u)-\kappa_1'(\theta_1^*) n+\frac{3}{2\theta_1^*}\log n}$, and $(\mathcal{E}^{(2,i)})_{i\geq 1}$ are independent of $\mathcal{E}^{(1)}$ and are independent copies of the limit in law of the extremal process $\mathcal{E}^{(2)}_n:=\sum_{|u|=n}\delta_{V_2(u)-\kappa_2'(\theta_2^*)n+\frac{3}{2\theta_2^*}\log n}$. Moreover, by \cite{madaule}, we know that $\mathcal{E}^{(1)}$ is a randomly shifted decorated Poisson point process, which, combined with (\ref{eqth21}), yields that $\mathcal{E}$ is actually a randomly shifted decorated Poisson point process.

\subsection{Mean regime}
In this section, we give a conjecture about the limit law of centered maximum. This case has not been studied in branching Brownian motion setting.
\begin{conj}[Mean regime]
Suppose that $\mathcal{L}_1$ and $\mathcal{L}_2$ are non-lattice. Assume that there exists $\theta_1^*,\theta_2^*>0$ such that $\mathcal{L}_1$ and $\mathcal{L}_2$ satisfy assumptions (\ref{as6}), (\ref{as7}), (\ref{as8}) and $\theta_1^*=\theta_2^*$. Let $\theta:=\theta_1^*$ and $m_n:=\kappa_1'(\theta) \lfloor tn\rfloor+\kappa_2'(\theta) (n-\lfloor tn\rfloor)-\frac{3}{2\theta}\log n$. Then
\begin{enumerate}[(i)]
\item there exists some constant $C_{\mathrm{m}}>0$ such that for any $y\in\R$, we have
\[\lim_{n\to\infty}\bP(M_n^{(n)}-m_n\leq y)=\E(e^{-C_{\mathrm{m}}Z^{(1)}e^{-\theta y}}),\]
where $Z^{(1)}$ is the same random variable as in Theorem \ref{th2}.
\item the extremal process
\[\mathcal{E}_n:=\sum_{|u|=n}\delta_{V^{(n)}(u)-m_n}\]
converges in law to a randomly shifted decorated Poisson point process $\mathcal{E}$ with the random intensity measure $C_{\mathrm{m}}Z^{(1)}\theta e^{-\theta y}dy$ and the decoration $\mathcal{D}^{(2)}$, where $\mathcal{D}^{(2)}$ is the limit in law of $\sum_{|u|=n}\delta_{V_2(u)-\max_{|u|=n}V_2(u)}$ conditionally on $\{\max_{|u|=n}V_2(u)\geq \kappa_2'(\theta) n\}$.
\end{enumerate}
\end{conj}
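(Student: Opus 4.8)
The plan is to follow the Laplace-functional strategy already used for Theorem~\ref{th1}, now feeding in the \emph{critical} (derivative-martingale) behaviour of the first environment and the \emph{moderate-deviation} behaviour of the second. It suffices to establish part~(ii), from which part~(i) follows by choosing $f$ to approximate $+\infty\cdot\mathbf{1}_{(y,\infty)}$ and using the $e^{-\theta y}$ tail produced by the decoration. Concretely, for every continuous bounded $f\colon\R\to[0,\infty)$ whose support is bounded from below, the goal is to show
\[
\lim_{n\to\infty}\E\big(e^{-\mathcal{E}_n(f)}\big)=\E\Big[\exp\Big(-C_{\mathrm m}Z^{(1)}\theta\int_\R\big(1-\E\big[e^{-\mathcal{D}^{(2)}(f(\cdot+z))}\big]\big)e^{-\theta z}\,\mathrm{d}z\Big)\Big],
\]
which is exactly the Laplace functional of the announced randomly shifted decorated Poisson point process.

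Writing $L:=\lfloor tn\rfloor$ and $k:=n-\lfloor tn\rfloor$, I would apply the Markov property at generation $L$ to factorise $\E(e^{-\mathcal{E}_n(f)})=\E\big(e^{-\mathcal{E}^{(1)}_L(\phi_k[f])}\big)$, where $\mathcal{E}^{(1)}_L:=\sum_{|v|=L}\delta_{\xi_v}$ with $\xi_v:=V_1(v)-\kappa_1'(\theta)L+\frac{3}{2\theta}\log L$ is the first environment centred at its own critical level, and $\phi_k[f](x):=-\log\E\big[\exp\big(-\sum_{|u|=k}f\big(x+V_2(u)-\kappa_2'(\theta)k+\frac{3}{2\theta}\log(n/L)\big)\big)\big]$ is the second-phase Laplace transform. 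The decisive structural point is that, because the total logarithmic correction is only $\frac{3}{2\theta}\log n\sim\frac{3}{2\theta}\log L$, the shift $\frac{3}{2\theta}\log(n/L)$ stays bounded: the second environment is centred at $\kappa_2'(\theta)k+O(1)$, a full $\frac{3}{2\theta}\log k$ \emph{above} its typical maximum $\kappa_2'(\theta)k-\frac{3}{2\theta}\log k$, so every contributing tree must perform a moderate deviation.

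The heart of the proof is the asymptotics of $\phi_k[f]$ in this window, which I would extract from \cite{luo}: the probability that the second environment reaches a level $\ell$ above its typical maximum decays like $\ell\,e^{-\theta\ell}$, and conditionally on this event its top cloud converges to the decoration $\mathcal{D}^{(2)}$. This should yield, uniformly over the relevant range of $x$,
\[
\phi_k[f](x)\sim c_\ast\,k^{-3/2}\Big(\tfrac{3}{2\theta}\log k-x\Big)e^{\theta x}\;\theta\int_\R\big(1-\E\big[e^{-\mathcal{D}^{(2)}(f(\cdot+z))}\big]\big)e^{-\theta z}\,\mathrm{d}z,
\]
where the $z$-integral encodes how $f$ sees the conditioned second-phase cloud and $c_\ast$ is universal. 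Summing this against the first environment is then a linear statistic, and the factor $\big(\frac{3}{2\theta}\log k-x\big)e^{\theta x}$ couples not to the additive martingale but to the \emph{derivative} one: by the convergence of the first-phase derivative martingale $Z_L^{(1)}\to Z^{(1)}$ (together with the Seneta--Heyde normalisation, as in \cite{aidekon2,madaule}) the $\log k$-term is negligible and $\sum_{|v|=L}\big(\frac{3}{2\theta}\log k-\xi_v\big)e^{\theta\xi_v}\sim c'\,L^{3/2}Z^{(1)}$. Since $k^{-3/2}L^{3/2}\to(t/(1-t))^{3/2}$, the powers of $L$ and $k$ cancel and $\sum_{|v|=L}\phi_k[f](\xi_v)$ converges to $C_{\mathrm m}Z^{(1)}\theta\int(1-\E[e^{-\mathcal{D}^{(2)}(f(\cdot+z))}])e^{-\theta z}\mathrm{d}z$, which gives the displayed limit. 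Note that because $\phi_k[f]$ is a deterministic smooth weight, this linear statistic is insensitive to the clustering of the first environment: its decoration is absorbed into the single scalar $Z^{(1)}$ and does not appear in the limit, leaving only $\mathcal{D}^{(2)}$, exactly as conjectured.

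The main obstacle lies in making the third step rigorous. First, one needs the moderate-deviation asymptotics of $\phi_k[f]$ sharply and \emph{uniformly} over the whole $x$-window, with error terms small enough that, once summed against the $O(e^{\theta a})$ particles of the first environment near relative level $-a$, they vanish; this demands precise first- and second-moment (many-to-one and many-to-two) estimates and a barrier localisation to discard trees that climb too high at intermediate times. Second, the two scales $L,k\to\infty$ are coupled through $n$, so the derivative-martingale limit for the first phase and the decoration limit for the second must be taken simultaneously, which requires a uniform (in $k$) version of the $\phi_k[f]$ estimate and a truncation of the first-phase sum at a level growing slowly with $n$. Assembling these, together with the identification of $C_{\mathrm m}$ as the product of $(t/(1-t))^{3/2}$ and the normalising constants of the two one-environment extremal theories, and of $Z^{(1)}$ with the derivative-martingale limit of Theorem~\ref{th2}, would complete the proof.
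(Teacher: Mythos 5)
First, a point of order: the paper does not prove this statement --- it is stated as an open conjecture (``This case has not been studied\dots''), so there is no proof of the paper's to compare yours against; what follows assesses your proposal on its own merits. Your architecture (Markov property at time $t_n$, upper-deviation asymptotics for the second phase, coupling of the resulting linear statistic to a first-phase martingale limit, decoration coming only from $\mathcal{L}_2$) is the natural adaptation of the paper's fast-regime proof, and your structural conclusion matches the conjecture.

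However, there is a genuine error, not just a gap to be made rigorous: your claimed uniform asymptotics $\phi_k[f](x)\sim c_\ast k^{-3/2}\bigl(\tfrac{3}{2\theta}\log k-x\bigr)e^{\theta x}\theta\int(\cdots)$ fails exactly in the window that carries the limit. The derivative-martingale-type statistic $\sum_v\bigl(\tfrac{3}{2\theta}\log k-\xi_v\bigr)e^{\theta\xi_v}$ is dominated by first-phase particles at depth $|\xi_v|\asymp\sqrt{t_n}$ below $m^{(1)}_{t_n}$ (this is where the mass of $Z^{(1)}_{t_n}$ sits), and for such a particle the second phase must exceed its critical slope line $\kappa_2'(\theta)k$ by an amount of order $\sqrt{k}$. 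In that regime the tail of $M_{2,k}$ carries an additional Gaussian factor $e^{-\xi_v^2(1+o(1))/(2\kappa_2''(\theta)k)}$ coming from the curvature of the rate function; it is a constant bounded away from $1$ on the dominant scale and cannot be dropped. This is visible in the paper's own fast-regime proof, where precisely this factor appears as the Gaussian density $f$ in (\ref{lt}) and produces the mixed variance $t\kappa_1''(\theta)+(1-t)\kappa_2''(\theta)$ in the constant $C_{\mathrm f}$ of (\ref{fast}). A clean sanity check exposes the problem: take $\mathcal{L}_1=\mathcal{L}_2$, which is a special case of the mean regime; then the two-speed walk is a homogeneous branching random walk and Theorem \ref{th3} (Madaule) gives a limit law that cannot depend on $t$. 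Your computation, summed as you propose, yields a constant proportional to a nontrivial power of $t/(1-t)$ --- a contradiction. Consequently your identification of $C_{\mathrm m}$ as ``$(t/(1-t))^{3/2}$ times the one-environment normalising constants'' is wrong; the correct constant must involve $t\kappa_1''(\theta)+(1-t)\kappa_2''(\theta)$, in analogy with $C_{\mathrm f}$.

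Two further consequences follow. Once the Gaussian weight is restored, the limit of $\sum_v\phi_k[f](\xi_v)$ is no longer a consequence of $Z^{(1)}_{t_n}\to Z^{(1)}$ alone: one needs Seneta--Heyde/profile-type convergence of statistics $\sum_v h(-\xi_v/\sqrt{t_n}\,)e^{\theta\xi_v}$ to $\mathrm{const}\cdot Z^{(1)}$ (in the spirit of \cite{pain} and of A\"id\'ekon--Shi), which fortunately still produces a limit proportional to $Z^{(1)}$, so the conjectured form survives --- but the proof mechanism you describe must change. Second, the input you attribute to \cite{luo} is not available there: \cite{luo} treats deviations at slopes strictly above the speed ($\theta\kappa_2'(\theta)>\kappa_2(\theta)$), whereas here the conditioning is at the critical slope $\kappa_2'(\theta_2^*)$ with overshoots up to order $\sqrt{k}$. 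Uniform tail asymptotics together with convergence of the decoration in that moderate-deviation regime (Gaussian correction included) is precisely the missing ingredient; it must be proved, not cited.
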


The rest of the paper is organized as follows. In Section \ref{s2}, we give a construction of two-speed branching random walk. In the rest of Section \ref{s2}, we focus on a branching random walk, and introduce the additive martingale and the spinal decomposition theorem for a time-homogeneous branching random walk. Moreover, we use spinal decomposition theorem to prove a $L^1$ convergence theorem associated with additive martingales. In the final of Section \ref{s2}, some well-known results about derivative martingales and extremal processes are introduced, and we give some properties about extremal processes. In Section \ref{pf1} and Section \ref{proofofth2}, we give proofs of Theorem \ref{th1} and Theorem \ref{th2} respectively.

In the proof of this paper, we denote by $C$ positive constant that may change from line to line.
\section{Preliminaries}\label{s2}
For convenience, we write $t_n$ for $\lfloor tn\rfloor$. In this section, we explain how to construct a two-speed branching random walk. Then we go back to the setting of a time-homogeneous branching random walk and introduce the additive martingale and spinal decomposition theorem, which help us to prove a limit theorem associated with additive martingales. Next, we introduce the derivative martingale and some classical results about the limit of centered maximum and extremal process, and prove some properties about the limit of the extremal process.
\subsection{Construction of the two-speed branching random walk}\label{uhn}
 Define the set of finite sequence of positive integers
\[\mathbb{T}:=\bigcup_{n\geq 0}\N^{n},\]
with the usual convention $\N^0:=\{\varnothing\}$.

Let $(\mathcal{L}_i^{(u)}:u\in\T)$ be independent copies of point process $\mathcal{L}_i$, $i=1,2$ respectively. We assume that $(\mathcal{L}_1^{(u)}:u\in\T)$ and $(\mathcal{L}_2^{(u)}:u\in\T)$ are independent. As there exists some $\theta_i>0$ such that $\kappa_i(\theta_i)<\infty$, then for any $y\in\R$, $\mathcal{L}_i([y,\infty))<\infty$ almost surely, therefore we can rank the atoms of $L_i$ in a non-increasing fashion. For $i\in\{1,2\}$, $u\in\T$, we write $\mathcal{L}_i^{(u)}$ as $(\ell_{i,j}^{(u)}:j\geq 1)$, where $(\ell_{i,j}^{(u)}:j\geq 1)$ is a non-increasing sequence generated by all atoms in $\mathcal{L}_i^{(u)}$ with the conventions that $\ell_{i,j}^{(u)}:=-\infty$ if $j>\mathcal{L}_{i}^{(u)}(\R)$ and that $\lim_{j\to\infty} \ell_{i,j}^{(u)}=-\infty$.
For $u\in\T$, we write $|u|$ for its generation, i.e. $u\in\N^{|u|}$. For $0\leq j\leq |u|$, we denote by $u_j$ the $j$-th ancestor of particle $u$, i.e. $u_j$ is the projection of $u$ on $\N^{j}$ . With the help of notation above, we can construct a two-speed branching random walk $(V^{(n)}(u):|u|\leq n)_{n\geq 1}$ as follows. Fix $t\in[0,1]$, for any $n\in\N$, let $V^{(n)}(\varnothing):=0$ and for $|u|\leq n$ and $u\neq \varnothing$,
\[V^{(n)}(u):=\left\{
               \begin{array}{ll}
                 \sum_{j=1}^{|u|}\ell_{1,j}^{(u_{j-1})}, & \hbox{$1\leq |u|\leq t_n$;} \\
                 \sum_{j=1}^{t_n}\ell_{1,j}^{(u_{j-1})}+\sum_{j=t_n+1}^{|u|}\ell_{2,j}^{(u_{j-1})}, & \hbox{$t_n+1\leq |u|\leq n$,}
               \end{array}
             \right.
\]
 where we say that particle $u$ is in death if $V^{(n)}(u)=-\infty$. We denote by $\{|u|=n\}$ the set of all particles alive at time $n$. Remark that if $t\in\{0,1\}$, a two-speed branching random walk can be seen as a time-homogeneous branching random walk. We thus assume that $t\in(0,1)$.
\subsection{Spinal decomposition: a change of measure}\label{sd}
In this section, we only consider a time-homogeneous branching random walk $(V(u):u\in\T)$ with reproduction law $\mathcal{L}$. Lyons\cite{lyons} introduced the spinal decomposition theorem for the branching random walk as an alternative description of the law of the branching random biased by its additive martingale. This method is an extension of the spinal decomposition obtained by Lyons et al.\cite{lyons1} for Galton-Watson processes and Chauvin and Rouault\cite{chauvin} for branching Brownian motion.

Let $(\mathscr{F}_n)_{n\geq 0}$ be the natural filtration of the branching random walk, i.e.
 \[\mathscr{F}_n:=\sigma (V(u):|u|\leq n),\]
 with $\mathscr{F}_\infty:=\sigma(V(u):u\in\mathbb{T})$.
Fix $\theta>0$ such that $\kappa(\theta):=\E(\sum_{|u|=1}e^{\theta V(u)})<\infty$, we denote by
\begin{equation}\label{am}
W_n(\theta):=\sum_{|u|=n}e^{\theta V(u)-n\kappa(\theta)}
\end{equation}
the additive martingale with parameter $\theta$.
 By the branching property, $(W_n(\theta))_{n\geq 1}$ is a non-negative martingale with respect to $(\F_n)_{n\geq 1}$, which implies that there exists a non-negative random variable $W(\theta):=\lim_{n\to\infty} W_n(\theta)$ a.s.

For $x\in\R$, we denote by $\bP_x$ the law of branching random walk starting from 0 and $\E_x$ the corresponding expectation. We write $\bP$ and $\E$ instead of $\bP_0$ and $\E_0$ for brevity. On the one hand, using Kolmogorov's extension theorem, there exists a probability $\tilde{\bP}_x$ on $\mathscr{F}_\infty$ satisfying for any $n\geq 0$, $A\in \mathscr{F}_n$,
\[\tilde{\bP}_x(A)=e^{-\theta x}\E_x(W_n(\theta)1_{A}).\]
We write $\tilde{\E}_x$ for the expectation with respect to $\tilde{\bP}_x$.

On the other hand, we define a point process $\hat{\mathcal{L}}$ is such that the law of $\hat{\mathcal{L}}$ is equal to the law of $\mathcal{L}$ biased by $\sum_{\ell\in \mathcal{L}}e^{\theta \ell-\kappa(\theta)}$. A branching random walk $(V(u):u\in\mathbb{T})$ with a spine $(\xi_n:n\geq 0)$ can be described as follows. At time 0, one particle $\xi_0:=\varnothing$ locates at position $V(\xi_0)=x$. At each time $n\geq1$, all particles die, while giving birth independently to sets of new particles. The displacements (with respect to their parent) of children of normal particle $u$ are distributed as $(V(i):i\geq 1)$. The displacements (with respect to their parent) of children of spine particle $\xi_{n-1}$ are distributed as $\hat{\mathcal{L}}$; the particle $\xi_n$ is chosen among the children $v$ of $\xi_{n-1}$ with probability proportional to $e^{\theta V(v)}$. Let us denote by $\bar{\bP}_x$ the law of the branching random walk with a spine and $\bar{\E}_x$ the corresponding expectation. The spinal decomposition theorem corresponds in the identification of the law of the size-biased branching random walk and the law of the branching random walk with a spine.
 \begin{theorem}[Lyons\cite{lyons}]
 For any $x\in\R$ and $n\geq 0$, the law of $(V(u):|u|\leq n)$ is identical under $\tilde{\bP}_x$ and $\bar{\bP}_x$. Moreover, for any $|u|=n$,
\begin{equation}\label{spine}
\bar{\bP}_x(u=\xi_n|\mathscr{F}_n)=\frac{e^{\theta V(u)-n\psi(\theta)}}{W_n(\theta)}\quad \text{a.s.}
\end{equation}
 \end{theorem}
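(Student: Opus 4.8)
The plan is to reduce everything to a single \emph{spine identity}: for every bounded $\mathscr{F}_n$-measurable $g$ and every $u$ with $|u|=n$,
\begin{equation}\label{star}
\bar{\E}_x\!\left[g\,\mathbf{1}_{\{\xi_n=u\}}\right]=e^{-\theta x}\,\E_x\!\left[g\,e^{\theta V(u)-n\kappa(\theta)}\right].
\end{equation}
Granting \eqref{star}, both assertions follow at once. Summing over $|u|=n$ and using $\sum_{|u|=n}\mathbf{1}_{\{\xi_n=u\}}=1$ gives
\begin{equation}
\bar{\E}_x[g]=e^{-\theta x}\,\E_x\!\left[g\sum_{|u|=n}e^{\theta V(u)-n\kappa(\theta)}\right]=e^{-\theta x}\,\E_x[gW_n(\theta)]=\tilde{\E}_x[g],
\end{equation}
which is precisely the defining relation of $\tilde{\bP}_x$ and yields the identity of the two laws on $\mathscr{F}_n$. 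Rewriting the right-hand side of \eqref{star} as $\tilde{\E}_x[g\,e^{\theta V(u)-n\kappa(\theta)}/W_n(\theta)]$ and using this identity then identifies the conditional spine law as $\bar{\bP}_x(\xi_n=u\mid\mathscr{F}_n)=e^{\theta V(u)-n\kappa(\theta)}/W_n(\theta)$, which is the second assertion (here $\psi(\theta)=\kappa(\theta)$).

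First I would prove \eqref{star} for $n=1$. Since $\hat{\mathcal{L}}$ is the law of $\mathcal{L}$ biased by $\sum_{\ell\in\mathcal{L}}e^{\theta\ell-\kappa(\theta)}$, generating the first generation under $\bar{\bP}_x$ amounts to sampling the displacements under $\mathcal{L}$ and reweighting by $\sum_{|v|=1}e^{\theta(V(v)-x)-\kappa(\theta)}$; the subsequent choice $\xi_1=u$ contributes the factor $e^{\theta V(u)}/\sum_{|v|=1}e^{\theta V(v)}$. The crucial point is that the normalizing sum in the selection factor cancels the sum produced by the size-biasing, so the two factors collapse to $e^{-\theta x}e^{\theta V(u)-\kappa(\theta)}$, giving \eqref{star} with $n=1$.

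Next I would run an induction on $n$ using the branching property at the first generation. Conditionally on the first generation and on $\xi_1$, the subtree rooted at $\xi_1$ is again a branching random walk with spine started from the position $V(\xi_1)$, while the subtrees rooted at the remaining first-generation particles are independent copies of the plain branching random walk. Applying the inductive hypothesis to the spine subtree (over the remaining $n-1$ generations) produces the factor $e^{-\theta V(\xi_1)}e^{\theta V(u)-(n-1)\kappa(\theta)}$, and combining it with the first-generation factor $e^{-\theta x}e^{\theta V(\xi_1)-\kappa(\theta)}$ from the base case makes the intermediate position $V(\xi_1)$ telescope away, leaving $e^{-\theta x}e^{\theta V(u)-n\kappa(\theta)}$ as required.

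I expect the only real obstacle to be the bookkeeping in the inductive step: one must carefully decompose the spine tree into the first-generation reproduction (with the spine particle using $\hat{\mathcal{L}}$), the distinguished spine subtree (again a spine tree), and the independent non-spine subtrees, and check that these pieces factor correctly so that the conditional expectation matches the plain branching random walk weighted by $e^{\theta V(u)-n\kappa(\theta)}$. The algebraic heart---the per-generation cancellation of the selection denominator against the size-biasing sum---is elementary; all the subtlety is in making the generation-by-generation factorization rigorous.
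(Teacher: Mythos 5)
The paper offers no proof of this theorem---it is imported verbatim from Lyons \cite{lyons}---so the only meaningful comparison is with the classical argument, and your proposal is exactly that argument: the fibered identity $\bar{\E}_x[g\,\mathbf{1}_{\{\xi_n=u\}}]=e^{-\theta x}\,\E_x[g\,e^{\theta V(u)-n\kappa(\theta)}]$ for bounded $\mathscr{F}_n$-measurable $g$, established by induction with the per-generation cancellation of the spine-selection denominator $\sum_{|v|=1}e^{\theta V(v)}$ against the size-biasing weight, from which both assertions follow by summing over $|u|=n$ and by dividing by $W_n(\theta)$ (which is positive $\bar{\bP}_x$-a.s.), respectively. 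Your proof is correct as sketched---the remaining bookkeeping in the inductive step (reducing to product-form $g$ across first-generation subtrees plus a monotone class argument) is routine and you have correctly identified it as the only point needing care---and your observation that $\psi(\theta)$ in the displayed formula should read $\kappa(\theta)$ is also right.
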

Finally, we introduce the many-to-one formula, which can be seen as an direct corollary of the spinal decomposition theorem.
\begin{corollary}[Theorem 1.1 in \cite{shi}]\label{many}
Assume that there exists $\theta>0$ such that $\kappa(\theta)<\infty$.
Then for any $x\in\R$ and $n\in\N$ and any measurable function $g$: $\R^n\to[0,\infty)$, we have
\[\E_x(\sum_{|u|=n}g(V(u_1),\ldots,V(u_n)))=e^{\theta x}\E_x(e^{-\theta S_n+n\kappa(\theta)}g(S_1,\ldots,S_n)),\]
where under $\bP_x$, $(S_k)_{k\geq 0}$ is a random walk starting from $x$ and is such that for any measurable function $f:\R\to [0,\infty)$,
\[e^{-\theta x}\E_x(\sum_{|u|=1}f(V(u))e^{\theta V(u)-\kappa(\theta)})=\E_x(f(S_1)).\]
\end{corollary}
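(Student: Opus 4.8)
The cleanest route is to deduce the many-to-one formula directly from the spinal decomposition theorem just stated, as the corollary's attribution to a spine argument suggests. The plan is to express the expected sum over the $n$-th generation as the expectation of a single functional evaluated along the spine, and then unwind the two successive changes of measure. Concretely, for any nonnegative measurable $h:\R^n\to[0,\infty)$, I start from the identity obtained by inserting the indicator $1_{\{u=\xi_n\}}$, conditioning on $\F_n$, and applying the spine-selection formula $(\ref{spine})$ to replace that indicator by the weight $e^{\theta V(u)-n\kappa(\theta)}/W_n(\theta)$, namely
\[\bar{\E}_x\big(h(V(\xi_1),\dots,V(\xi_n))\big)=\bar{\E}_x\Big(\sum_{|u|=n}\frac{e^{\theta V(u)-n\kappa(\theta)}}{W_n(\theta)}\,h(V(u_1),\dots,V(u_n))\Big),\]
where I have used that the $j$-th ancestor of $\xi_n$ is exactly $\xi_j$. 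Since the right-hand side is $\F_n$-measurable and the law of $(V(u):|u|\le n)$ is the same under $\bar{\bP}_x$ and $\tilde{\bP}_x$, I replace $\bar{\E}_x$ by $\tilde{\E}_x$ and then unfold $\tilde{\E}_x(\cdot)=e^{-\theta x}\E_x(W_n(\theta)\,\cdot\,)$; the factor $W_n(\theta)$ cancels, leaving
\[\bar{\E}_x\big(h(V(\xi_1),\dots,V(\xi_n))\big)=e^{-\theta x}\,\E_x\Big(\sum_{|u|=n}e^{\theta V(u)-n\kappa(\theta)}\,h(V(u_1),\dots,V(u_n))\Big).\]
Applying this with $h(y_1,\dots,y_n)=g(y_1,\dots,y_n)e^{-\theta y_n+n\kappa(\theta)}$ and using $V(u_n)=V(u)$ for $|u|=n$ makes the exponential weight on the right collapse to $1$, so the right-hand side becomes $e^{-\theta x}\E_x(\sum_{|u|=n}g(V(u_1),\dots,V(u_n)))$, which rearranges into the claim once the left-hand side is identified as $\E_x(e^{-\theta S_n+n\kappa(\theta)}g(S_1,\dots,S_n))$.

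For that last identification I must check that under $\bar{\bP}_x$ the displacement sequence $(V(\xi_k)-V(\xi_{k-1}))_{k\ge1}$ is i.i.d.\ with exactly the tilted step law defining $S_1$. This is the one genuine computation. The children of $\xi_{k-1}$ follow $\hat{\mathcal{L}}$, whose law is that of $\mathcal{L}$ biased by $\sum_{\ell\in\mathcal{L}}e^{\theta\ell-\kappa(\theta)}$, and $\xi_k$ is then chosen proportionally to $e^{\theta(\text{displacement})}$. For a test function $f$, the selection weight $e^{\theta\ell_j}/\sum_i e^{\theta\ell_i}$ multiplied against the size-biasing factor $e^{-\kappa(\theta)}\sum_i e^{\theta\ell_i}$ cancels the normalizer, leaving $\bar{\E}_x[f(V(\xi_k)-V(\xi_{k-1}))\mid \F_{k-1}]=\E(\sum_{\ell\in\mathcal{L}}e^{\theta\ell-\kappa(\theta)}f(\ell))$, which is translation-invariant and matches the defining relation of $S_1$. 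Hence $(V(\xi_k))_{k\ge0}$ is the random walk $(S_k)_{k\ge0}$ started at $x$, and $\bar{\E}_x(h(V(\xi_1),\dots,V(\xi_n)))=\E_x(h(S_1,\dots,S_n))$, which closes the argument.

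The main obstacle is not any single hard estimate but keeping the bookkeeping exactly right: I must track the starting position $x$ through the $e^{\theta x}$ and $e^{-\theta x}$ prefactors, remember that $V(u_n)=V(u)$ for $|u|=n$ so that the auxiliary weight in $h$ telescopes, and confirm that $(\ref{spine})$ and the change of measure are being used on a genuinely $\F_n$-measurable functional. A fully elementary alternative, avoiding the spine altogether, is induction on $n$ via the branching property: the base case $n=1$ is just the defining relation of $S_1$ applied to $f(y)=g(y)e^{-\theta y+\kappa(\theta)}$, and the inductive step conditions on the first generation and applies the hypothesis to each independent subtree, with the exponential tilt and the random-walk increment reassembling across generations. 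I would present the spinal proof as primary, since the required machinery is already in place.
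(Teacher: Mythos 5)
Your proposal is correct and follows exactly the route the paper intends: the paper states Corollary \ref{many} without proof, citing \cite{shi} and noting it is a direct corollary of the spinal decomposition theorem, and your argument (inserting $1_{\{u=\xi_n\}}$, applying (\ref{spine}), unwinding the change of measure so that $W_n(\theta)$ cancels, and identifying the spine increments as the tilted step law of $S_1$) is precisely that derivation, carried out with the bookkeeping done correctly.
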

Moreover, as $\theta>0$ such that $\kappa'(\theta),\kappa''(\theta)<\infty$, we know that $\E(S_1)=\kappa'(\theta)$ and $\E[(S_1-\E(S_1))^2]=\kappa''(\theta)$.
\subsection{Central limit theorem associated with additive martingales}\label{adm}
We denote by $C_b(\R)$ the set of all continuous and bounded function $f:\R\to\R.$ Given a time-homogeneous branching random walk $(V(u):u\in\T)$ with reproduction law $\mathcal{L}$, we aim to prove the following theorem.
\begin{theorem}\label{prop1}
Let $\theta>0$ such that assumptions (\ref{as1}), (\ref{as3}) hold and such that $\theta\kappa'(\theta)<\kappa(\theta)$. Then for any $f\in C_b(\R)$, we have
\begin{equation}\label{L1cov}
\lim_{n\to\infty}\sum_{|u|=n}e^{\theta V(u)-n\kappa(\theta)}f\left(\frac{V(u)-n\kappa'(\theta)}{\sqrt{n}}\right)= W(\theta)\E(f(N))\quad \text{in } L^1,
\end{equation}
where $N\sim \mathcal{N}(0,\kappa''(\theta))$.
\end{theorem}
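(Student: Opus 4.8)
Write $Y_n(f):=\sum_{|u|=n}e^{\theta V(u)-n\kappa(\theta)}f\big(\tfrac{V(u)-n\kappa'(\theta)}{\sqrt n}\big)$; the goal is $Y_n(f)\to W(\theta)\E(f(N))$ in $L^1$. The plan is first to reduce the claim to convergence in probability. Since the weights $e^{\theta V(u)-n\kappa(\theta)}$ are nonnegative with sum $W_n(\theta)$, one has $|Y_n(f)|\le\|f\|_\infty W_n(\theta)$. Under the standing hypotheses $\theta\kappa'(\theta)<\kappa(\theta)$ and (\ref{as3}), the additive martingale $(W_n(\theta))_n$ converges in $L^1$ (Biggins, see \cite{bigginsa}), so it is uniformly integrable and $\E W(\theta)=1$. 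Consequently $\{Y_n(f)\}_n$ is dominated by a uniformly integrable family, hence is itself uniformly integrable, and convergence in probability will upgrade automatically to $L^1$.

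The moments are handled by the many-to-one formula. Applying Corollary \ref{many} with a test function of the endpoint only gives $\E[Y_n(f)]=\E\big[f\big(\tfrac{S_n-n\kappa'(\theta)}{\sqrt n}\big)\big]$, where $(S_k)$ is the walk of Corollary \ref{many}, whose increments have mean $\kappa'(\theta)$ and, by (\ref{as1}), variance $\kappa''(\theta)\in(0,\infty)$. The classical central limit theorem yields $\E[Y_n(f)]\to\E[f(N)]=\E[W(\theta)]\E[f(N)]$, consistent with the target mean. I would run the same computation conditionally on $\F_m$ by splitting $Y_n(f)$ according to the ancestor $v$ at a fixed generation $m$: each subtree rooted at $v$ is an independent branching random walk, and applying many-to-one inside it, together with $\tfrac{V(v)-m\kappa'(\theta)}{\sqrt n}\to0$ and $\sqrt{(n-m)/n}\to1$, gives for fixed $m$
\[
\E[Y_n(f)\mid\F_m]\ \xrightarrow[n\to\infty]{}\ W_m(\theta)\,\E[f(N)]\qquad\text{a.s. and in }L^1 ,
\]
by dominated convergence over $|v|=m$ (dominant $\|f\|_\infty e^{\theta V(v)-m\kappa(\theta)}$, summing to $\|f\|_\infty W_m(\theta)$). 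Since $W_m(\theta)\to W(\theta)$ in $L^1$, the conditional mean already carries the correct random limit, and the problem reduces to showing that the conditionally centred fluctuation $Y_n(f)-\E[Y_n(f)\mid\F_m]$ is negligible.

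This fluctuation is, conditionally on $\F_m$, a sum over $|v|=m$ of independent, centred contributions, so I would bound it by a conditional second moment. The subtlety is that nothing in the hypotheses guarantees $\kappa(2\theta)<\infty$, and indeed the raw second moment can be infinite: via many-to-one the diagonal term equals $\E[e^{\theta S_n-n\kappa(\theta)}f^2(\cdots)]$, in which the factor $e^{\theta S_n-n\kappa(\theta)}$ is controlled only after atypically large displacements are excluded. The remedy is a Biggins-type truncation: fix a level $A$ and replace $Y_n(f)$ by the sub-sum $Y_n^{(A)}(f)$ restricted to particles whose ancestral trajectory stays below the line of slope $\kappa'(\theta)$, say $V(u_j)\le j\kappa'(\theta)+A$ for all $j\le n$. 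On this event $e^{\theta S_n-n\kappa(\theta)}\le e^{\theta A}e^{-n(\kappa(\theta)-\theta\kappa'(\theta))}$, which is exponentially small precisely because $\kappa(\theta)-\theta\kappa'(\theta)>0$; hence the truncated contributions have finite second moment and the conditional second-moment/Chebyshev estimate, combined with the generation-$m$ split, forces $Y_n^{(A)}(f)-\E[Y_n^{(A)}(f)\mid\F_m]\to0$ in probability once $m$ is large. The discarded mass is dominated by $\|f\|_\infty\big(W_n(\theta)-W_n^{(A)}(\theta)\big)$, and the truncation estimate bounding $\E[W(\theta)-W^{(A)}(\theta)]$ by a quantity tending to $0$ as $A\to\infty$, uniformly in $n$, is exactly where assumption (\ref{as3}) (the $X\log X$ condition) enters. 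Letting $n\to\infty$, then $A\to\infty$, then $m\to\infty$, and using $W_m(\theta)\to W(\theta)$, identifies the limit as $W(\theta)\E[f(N)]$.

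I expect the truncation to be the main obstacle: one must choose the barrier so that the truncated second moment is simultaneously finite and asymptotically negligible after the $\F_m$-conditioning, while controlling the truncation error through (\ref{as3}) uniformly in $n$, and then manage the nested order of limits carefully. By contrast, the comparisons of scale ($\sqrt n$ versus $\sqrt{n-m}$) and the passage through the ordinary central limit theorem for $(S_k)$ are routine, since the relevant mass is carried by the uniformly integrable martingale $W_n(\theta)$ and the rescaled positions are tight.
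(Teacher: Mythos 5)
Your overall strategy coincides with the paper's own proof of Theorem \ref{prop1} (conditional first moment via many-to-one plus the CLT, a truncation to obtain finite second moments, then nested limits), and two of your steps are sound: the reduction to convergence in probability by uniform integrability, and the convergence $\E[Y_n(f)\mid\F_m]\to W_m(\theta)\E f(N)$, which is the paper's Lemma \ref{lem1}. But the truncation, which you correctly identify as the crux, fails as you have set it up. Your barrier has the wrong slope: truncating at $V(u_j)\le j\kappa'(\theta)+A$ discards, in expectation, $\bP(\exists k\le n:\ S_k-k\kappa'(\theta)>A)$ of the mass (by many-to-one), and since $S_k-k\kappa'(\theta)$ is a \emph{centered} walk with variance $\kappa''(\theta)>0$, this probability tends to $1$ for every fixed $A$; equivalently, taking $f\equiv 1$, $\E[Y_n^{(A)}(f)]=\bP(\max_{k\le n}(S_k-k\kappa'(\theta))\le A)\to 0$. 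So your truncated sum loses \emph{all} its mass as $n\to\infty$ with $A$ fixed, and sending $A\to\infty$ afterwards (your order of limits) cannot repair this. The paper's event $E_2(n,A)$ instead uses the steeper line $(\kappa'(\theta)+L)k+A$ with $L>0$ chosen so small that $a+\theta L+\theta\kappa'(\theta)-\kappa(\theta)<0$: then the discarded mass is bounded by $\sum_k\bP(S_k>(L+\kappa'(\theta))k+A)$, which is finite by the Hsu--Robbins theorem (this needs only the finite variance from (\ref{as1}), not (\ref{as3})) and vanishes as $A\to\infty$ uniformly in $n$, while the exponential decay $e^{(a+\theta L+\theta\kappa'(\theta)-\kappa(\theta))i}$ in the second-moment bound (Lemma \ref{second}) survives.

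The second gap is that a trajectory truncation alone does not make the second moment finite, so the Chebyshev step cannot even start. Expanding $\E[(Y_n^{(A)}(f))^2]$ over pairs of particles and splitting at their most recent common ancestor produces the one-step factor $\sum_{j_1\neq j_2}e^{\theta(\ell_{j_1}+\ell_{j_2})}\le X^2$ with $X=\sum_{\ell\in\mathcal{L}}e^{\theta\ell}$, and $\E[X^2]$ may be infinite under (\ref{as1}) and (\ref{as3}): take a reproduction law with $N$ children placed independently at $\pm1$, where $\E[N\log N]<\infty$ but $\E[N^2]=\infty$. Capping positions does not help, because the divergence comes from the \emph{number} of siblings, not their displacements; in this example your truncated sum has infinite second moment for every $n$. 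This is precisely why the paper introduces the second truncation event $E_1(n,A)$, which bounds $\sum_i e^{\theta(V(u_ki)-V(u_k))}$ by $Ae^{a(k+1)}$ along the ancestral line, and it is in discarding the complement of $E_1(n,A)$ --- via the spinal decomposition and $\bar{\E}\bigl[\log_+\sum_{|u|=1}e^{\theta V(u)}\bigr]<\infty$ --- that assumption (\ref{as3}) actually enters (Lemma \ref{lem2}); you instead attribute (\ref{as3}) to the trajectory-truncation error, where it plays no role. With these two corrections (steeper barrier plus the reproduction cap), your argument becomes essentially the paper's Lemmas \ref{lem1}--\ref{lem3}.
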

\begin{remark}
For our purpose, we only need to prove that the convergence in Theorem \ref{prop1}
holds in probability. Under the additional assumption that $\E(W_1(\theta)^{1+\gamma})<\infty$ for some $\gamma>0$, this result has been proved in \cite{bigginsu}, \cite{pain} and \cite{Ik}. We remove this integrability condition by a truncation argument.
\end{remark}
We first prove a series of lemmas, which are helpful for us to prove Theorem \ref{prop1} in the end of this section. Fix $f\in C_b(\R)$,
we define
\[\bar{W}_n(\theta):=\sum_{|u|=n}e^{\theta V(u)-n\kappa(\theta)}f\left(\frac{V(u)-n\kappa'(\theta)}{\sqrt{n}}\right).\]
Recall that $\F_k=\sigma(V(u):|u|\leq k)$.
\begin{lemma}\label{lem1}
Under the assumptions of Theorem \ref{prop1}, for any $f\in C_b(\R)$, we have
\[
\lim_{k\to\infty}\limsup_{n\to\infty}\E\left(\left|\E(\bar{W}_n(\theta)|\mathscr{F}_k)-W(\theta)\E f(N)\right|\right)=0.
\]
\end{lemma}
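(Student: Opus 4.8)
The plan is to compute the conditional expectation $\E(\bar{W}_n(\theta)\mid\mathscr{F}_k)$ explicitly using the branching property, and to show that as $n\to\infty$ (with $k$ fixed) it converges to something close to $W_k(\theta)\E f(N)$, which in turn converges to $W(\theta)\E f(N)$ as $k\to\infty$. First I would use the Markov property at time $k$: conditionally on $\mathscr{F}_k$, the subtree rooted at each particle $u$ with $|u|=k$ is an independent branching random walk started from $V(u)$, so
\[
\E(\bar{W}_n(\theta)\mid\mathscr{F}_k)=\sum_{|u|=k}e^{\theta V(u)-k\kappa(\theta)}\,g_{n,k}\!\left(V(u)\right),
\]
where, using the many-to-one formula (Corollary \ref{many}) applied to the subtree of length $n-k$, the function $g_{n,k}$ can be written as an expectation over the spinal random walk. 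Concretely, writing $(S_j)_{j\ge0}$ for the random walk from Corollary \ref{many} started at $0$, one gets
\[
g_{n,k}(x)=\E\!\left[f\!\left(\frac{x+S_{n-k}-n\kappa'(\theta)}{\sqrt{n}}\right)\right].
\]

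The heart of the argument is then a central limit theorem for the spinal walk. Since $\E(S_1)=\kappa'(\theta)$ and $\operatorname{Var}(S_1)=\kappa''(\theta)\in(0,\infty)$ by assumption (\ref{as1}), the ordinary CLT gives $(S_{n-k}-(n-k)\kappa'(\theta))/\sqrt{n-k}\Rightarrow N$ with $N\sim\mathcal{N}(0,\kappa''(\theta))$. The argument of $f$ inside $g_{n,k}(x)$ is $(x+S_{n-k}-n\kappa'(\theta))/\sqrt{n}$; for fixed $k$ and fixed $x$, the deterministic shift $(x-k\kappa'(\theta))/\sqrt{n}\to0$ and $\sqrt{(n-k)/n}\to1$, so this quantity converges in law to $N$ as well. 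Since $f$ is continuous and bounded, the bounded convergence theorem yields $g_{n,k}(x)\to\E f(N)$ for each fixed $x$ and $k$. Substituting back,
\[
\E(\bar{W}_n(\theta)\mid\mathscr{F}_k)-W_k(\theta)\E f(N)=\sum_{|u|=k}e^{\theta V(u)-k\kappa(\theta)}\bigl(g_{n,k}(V(u))-\E f(N)\bigr),
\]
and since there are only finitely many $u$ with $|u|=k$ (almost surely), the right-hand side tends to $0$ almost surely as $n\to\infty$. Because $f$ is bounded, $|g_{n,k}-\E f(N)|\le2\|f\|_\infty$, so the summands are dominated by $2\|f\|_\infty W_k(\theta)$, whose expectation is $2\|f\|_\infty$; dominated convergence then upgrades this to $L^1$ convergence, giving $\limsup_{n\to\infty}\E|\E(\bar{W}_n(\theta)\mid\mathscr{F}_k)-W_k(\theta)\E f(N)|=0$ for each fixed $k$.

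Finally I would combine this with the convergence $W_k(\theta)\to W(\theta)$. By the triangle inequality,
\[
\E\bigl|\E(\bar{W}_n(\theta)\mid\mathscr{F}_k)-W(\theta)\E f(N)\bigr|\le\E\bigl|\E(\bar{W}_n(\theta)\mid\mathscr{F}_k)-W_k(\theta)\E f(N)\bigr|+|\E f(N)|\,\E|W_k(\theta)-W(\theta)|.
\]
Taking $\limsup_{n\to\infty}$ kills the first term by the previous step, and then letting $k\to\infty$ the second term vanishes provided $W_k(\theta)\to W(\theta)$ in $L^1$. The main obstacle is precisely this last point: under the hypotheses $\theta\kappa'(\theta)<\kappa(\theta)$ together with (\ref{as3}), the additive martingale $(W_k(\theta))$ is uniformly integrable and converges to $W(\theta)$ in $L^1$ (this is the classical Biggins martingale convergence criterion, cf. \cite{bigginsa}), so $\E|W_k(\theta)-W(\theta)|\to0$; I would invoke this result, which is exactly the uniform-integrability input that the remark flags the truncation argument is designed to circumvent elsewhere, but which here enters only through the $L^1$ limit of the martingale itself rather than through $\E(W_1(\theta)^{1+\gamma})<\infty$.
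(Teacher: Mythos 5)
Your proposal follows essentially the same route as the paper's proof: compute $\E(\bar{W}_n(\theta)\mid\mathscr{F}_k)$ via the branching property and the many-to-one formula, use the central limit theorem for the spinal walk (with the Slutsky-type argument for the shift $(x-k\kappa'(\theta))/\sqrt{n}\to0$ and the factor $\sqrt{(n-k)/n}\to1$) to get $g_{n,k}(x)\to\E f(N)$, upgrade to $L^1$ by domination with $2\|f\|_\infty W_k(\theta)$, and finish with Biggins' $L^1$ martingale convergence theorem \cite{bigginsa}. All of these steps are exactly the paper's.

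One justification in your argument is, however, incorrect as stated: you claim that the sum $\sum_{|u|=k}e^{\theta V(u)-k\kappa(\theta)}\bigl(g_{n,k}(V(u))-\E f(N)\bigr)$ tends to $0$ almost surely "since there are only finitely many $u$ with $|u|=k$." Under the standing assumptions this need not hold: the reproduction point process $\mathcal{L}$ is only required to satisfy $\kappa(\theta)<\infty$, which forces $\mathcal{L}([y,\infty))<\infty$ a.s.\ but allows $\mathcal{L}(\R)=\infty$ (the paper's construction in Section \ref{uhn} explicitly accommodates infinitely many atoms via the convention $\lim_{j\to\infty}\ell_{i,j}^{(u)}=-\infty$), so generation $k$ may contain infinitely many particles. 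The correct justification — and the one the paper uses — is dominated convergence applied $\omega$-wise to the sum over $u$: each term tends to $0$, the terms are bounded by $2\|f\|_\infty e^{\theta V(u)-k\kappa(\theta)}$, and these weights are summable since $W_k(\theta)<\infty$ a.s.\ (which follows from $\E(W_k(\theta))=1$). You already have this domination in hand for the $L^1$ upgrade, so the gap is cosmetic and the repair is immediate; but as written, that step's justification would fail for reproduction laws with infinitely many offspring.
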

\begin{proof}
By triangular inequality, we have
\begin{equation}\label{ineq1}
\begin{split}
&\E\left(\left|\E(\bar{W}_n(\theta)|\mathscr{F}_k)-W(\theta)\E f(N)\right|\right)\\
\leq &\E\left(\left|W_k(\theta)\E f(N)-\E(\bar{W}_n(\theta)|\mathscr{F}_k)\right|\right)+\E(|W_k(\theta)-W(\theta)|)\E f(N).
\end{split}
\end{equation}
Using the many-to-one formula,
\[\E(\bar{W}_n(\theta)|\mathscr{F}_k)=\sum_{|u|=k}e^{\theta V(u)-k\kappa(\theta)}\E_{V(u)}\left[ f\left(\frac{S_{n-k}-n\kappa'(\theta)}{\sqrt{n}}\right)\right],\]
which implies that for fixed $k\in\N$,
\begin{equation}\label{eqa}
\begin{split}
|W_k(\theta)\E f(N)-\E&(\bar{W}_n(\theta)|\mathscr{F}_k)|\\
&\leq \sum_{|u|=k}e^{\theta V(u)-k\kappa(\theta)}\left|\E f(N)-\E_{V(u)}\left[ f\left(\frac{S_{n-k}-n\kappa'(\theta)}{\sqrt{n}}\right)\right]\right|
\end{split}
\end{equation}
Because $W_n(\theta)$ is a martingale, we know that $\E(W_n(\theta))=\E(W_0(\theta))=1$, which yields that $W_k(\theta)<\infty$ almost surely. For $|u|=k$, by central limit theorem, we know that almost surely
\[\lim_{n\to\infty}\left|\E f(N)-\E_{V(u)}\left[ f\left(\frac{S_{n-k}-n\kappa'(\theta)}{\sqrt{n}}\right)\right]\right|=0.\]
Thus, by dominated convergence, we conclude that the right hand side of (\ref{eqa}) decays to 0 almost surely as $n\to\infty$, then
\begin{equation}\label{eq1}
\lim_{n\to\infty}\E|W_k(\theta)\E f(N)-\E(\bar{W}_n(\theta)|\F_k)|=0.
\end{equation}
Combined with (\ref{ineq1}), we observe that for any $k\in\N$,
\[\limsup_{n\to\infty} \E\left(\left|\E(\bar{W}_n(\theta)|\F_k)-W(\theta)\E f(N)\right|\right)\leq \E(|W_k(\theta)-W(\theta)|)\E f(N).\]

By Theorem A in \cite{bigginsa}, $W_k(\theta)$ converges to $W(\theta)$ in $L^1$. Combined with the fact that $f$ is bounded, we have
\begin{equation}\label{eq2}
\lim_{k\to\infty}\E(|W_k(\theta)-W(\theta)|)\E f(N)=0,
\end{equation}
which completes the proof.
\end{proof}
As $\theta\kappa'(\theta)<\kappa(\theta)$, we can fix $a,L>0$ such that $a+\theta L+\theta\kappa'(\theta)-\kappa(\theta)<0$. For $u=(u^{(1)},\ldots,u^{(|u|)})$, $v=(v^{(1)},\ldots,v^{(|v|)})\in\T$, let $uv:=(u^{(1)},\ldots,u^{(|u|)},v^{(1)},\ldots,v^{|v|})$ be the concatenation operate with the convention $u\varnothing=\varnothing u=u$.  We introduce some truncated set as follows. For any $A>0$ and $n\in\N$, we define
\[E_1(n,A):=\left\{u\in\T: \sum_{i=1}^{\infty}e^{\theta (V(u_ki)-V(u_k))}<Ae^{a(k+1)},\forall 0\leq k\leq n-1\right\},\]
where $E_1(n,A)$ controls the number and positions of siblings of particles with respect to their parents.
And we denote by
 \[E_2(n,A):=\{u\in\T: V(u_k)-k\kappa'(\theta)\leq Lk+A,\forall 1\leq k\leq n\}\]
 a collection of particle $u$ whose trajectory $\{V(u_k):1\leq k\leq n\}$ stays below the line $\{(L+\kappa'(\theta))k+A:1\leq k\leq n\}$.
We construct an auxiliary process
\[\bar{W}_{n,A}(\theta):=\sum_{|u|=n}e^{\theta V(u)-n\kappa(\theta)}f\left(\frac{V(u)-n\kappa'(\theta)}{\sqrt{n}}\right)1_{\{u\in E_1(n,A)\cap E_2(n,A)\}}.\]
The following lemma tells us that $\bar{W}_{n,A}(\theta)$ is a good approximation of $\bar{W}_n(\theta)$.
\begin{lemma}\label{lem2}
Fix $\theta>0$ such that assumption (\ref{as1}) holds and $\theta\kappa'(\theta)<\kappa(\theta)$. Then
\begin{equation}\label{eq3}
\lim_{A\to\infty}\limsup_{n\to\infty}\sup_{k\in\N}\E(|\E(\bar{W}_n(\theta)|\mathscr{F}_k)-\E(\bar{W}_{n,A}(\theta)|\mathscr{F}_k)|)=0
\end{equation}
and
\begin{equation}\label{eq4}
\lim_{A\to\infty}\limsup_{n\to\infty}\E(|\bar{W}_n(\theta)-\bar{W}_{n,A}(\theta)|)=0.
\end{equation}
\end{lemma}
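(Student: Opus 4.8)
The plan is to bound the $L^1$ distance between $\bar W_n(\theta)$ and $\bar W_{n,A}(\theta)$ by the additive-martingale mass carried by particles that violate the truncation, and to control this mass through the spinal decomposition. Since $f$ is bounded and each weight $e^{\theta V(u)-n\kappa(\theta)}$ is nonnegative,
\[
|\bar W_n(\theta)-\bar W_{n,A}(\theta)|\le \|f\|_\infty\sum_{|u|=n}e^{\theta V(u)-n\kappa(\theta)}\bigl(1_{\{u\notin E_1(n,A)\}}+1_{\{u\notin E_2(n,A)\}}\bigr),
\]
so it suffices to prove that the expectation of each of the two sums vanishes after taking $\limsup_{n\to\infty}$ and then $A\to\infty$. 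Throughout I use the identity
\[
\E\Bigl(\sum_{|u|=n}e^{\theta V(u)-n\kappa(\theta)}g(u)\Bigr)=\bar{\E}\bigl(g(\xi_n)\bigr),
\]
valid for any nonnegative $\F_n$-measurable family $(g(u):|u|=n)$; it follows by conditioning on $\F_n$ in (\ref{spine}), together with $\bar{\bP}=\tilde{\bP}$ on $\F_n$ and the fact that $\tilde{\bP}$ has density $W_n(\theta)$ with respect to $\bP$ there. Both indicator families are $\F_n$-measurable, since $1_{\{u\notin E_1(n,A)\}}$ involves only displacements up to generation $n$, so the two sums are turned into spine probabilities.

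For the $E_2$ term, under $\bar{\bP}$ the spine trajectory $(V(\xi_k))_{k\ge0}$ has the law of the many-to-one walk $(S_k)$ of Corollary \ref{many}, whose increments have mean $\kappa'(\theta)$ and, by (\ref{as1}), finite variance. Writing $\bar S_k:=S_k-k\kappa'(\theta)$, the event $\{\xi_n\notin E_2(n,A)\}$ reads $\{\exists\,1\le k\le n:\bar S_k>Lk+A\}$, hence
\[
\E\Bigl(\sum_{|u|=n}e^{\theta V(u)-n\kappa(\theta)}1_{\{u\notin E_2(n,A)\}}\Bigr)\le \bP\Bigl(\sup_{k\ge1}(\bar S_k-Lk)>A\Bigr).
\]
Since $L>0$, the walk $(\bar S_k-Lk)$ has negative drift, so by the strong law of large numbers $\bar S_k-Lk\to-\infty$ almost surely and $\sup_{k\ge1}(\bar S_k-Lk)<\infty$ almost surely; the right-hand side therefore tends to $0$ as $A\to\infty$, uniformly in $n$, and this term contributes $0$ to (\ref{eq4}).

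For the $E_1$ term I use that, under $\bar{\bP}$, the reproduction point process at the spine vertex $\xi_k$ is the size-biased law $\hat{\mathcal L}$, so that with $X:=\sum_{\ell\in\mathcal L}e^{\theta\ell}$ the sibling sum $Q(\xi_k):=\sum_{i\ge1}e^{\theta(V(\xi_ki)-V(\xi_k))}$ satisfies $\bar{\bP}(Q(\xi_k)\ge c)=e^{-\kappa(\theta)}\E\bigl(X1_{\{X\ge c\}}\bigr)$. A union bound over $0\le k\le n-1$ gives
\[
\E\Bigl(\sum_{|u|=n}e^{\theta V(u)-n\kappa(\theta)}1_{\{u\notin E_1(n,A)\}}\Bigr)\le e^{-\kappa(\theta)}\sum_{k=0}^{\infty}\E\bigl(X1_{\{X\ge Ae^{a(k+1)}\}}\bigr).
\]
By Fubini the right-hand side equals $e^{-\kappa(\theta)}\E\bigl(X\,\#\{k\ge0:Ae^{a(k+1)}\le X\}\bigr)$, and since the cardinality is at most $\tfrac1a\log_+(X/A)$, for $A\ge1$ this is bounded by $\tfrac{e^{-\kappa(\theta)}}{a}\E\bigl(X\log_+X\,1_{\{X>A\}}\bigr)$, which converges to $0$ as $A\to\infty$ by assumption (\ref{as3}) and dominated convergence. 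Combining the two estimates yields (\ref{eq4}). Finally (\ref{eq3}) is immediate: conditional Jensen gives $\E\bigl(|\E(\bar W_n(\theta)-\bar W_{n,A}(\theta)\mid\F_k)|\bigr)\le\E\bigl(|\bar W_n(\theta)-\bar W_{n,A}(\theta)|\bigr)$ for every $k$, so the uniform bound just established controls the supremum over $k$ as well.

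I expect the $E_1$ estimate to be the main obstacle. It is the step where the spinal decomposition must be invoked to replace the branching sum by an i.i.d.\ sequence of size-biased sibling sums, and where the $X\log_+X$-integrability (\ref{as3}) is genuinely required for the Fubini/counting bound to be summable and to vanish; the geometric thresholds $Ae^{a(k+1)}$ with $a>0$ are precisely what make the counting produce a single $\log_+$ factor rather than a divergent series.
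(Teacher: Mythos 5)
Your proof is correct, and its skeleton coincides with the paper's: reduce (\ref{eq3}) to (\ref{eq4}) by conditional Jensen, bound $|\bar W_n(\theta)-\bar W_{n,A}(\theta)|$ by $\|f\|_\infty$ times the additive-martingale mass on $\{u\notin E_1(n,A)\}\cup\{u\notin E_2(n,A)\}$, and convert each of the two sums into a statement about the spine. Your treatment of the $E_1$ term is essentially the paper's argument in different clothing: the paper writes the same quantity as $\bar{\bP}(\exists k\le n-1:\ \sum_i e^{\theta(V(\xi_k i)-V(\xi_k))}\ge Ae^{a(k+1)})$, bounds it by the tail sum $\sum_k\bar{\bP}(\log\sum_{|u|=1}e^{\theta V(u)}>ak+\log A)$, and invokes (\ref{as3}) to make this summable and vanish; your Fubini/counting bound $\tfrac1a\E(X\log_+X\,1_{\{X>A\}})$ is the same use of (\ref{as3}) under the original measure rather than the size-biased one. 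Where you genuinely diverge is the $E_2$ term: the paper bounds $\bP(\exists k\le n:\ S_k-k\kappa'(\theta)>Lk+A)$ by the series $\sum_k\bP(S_k>(L+\kappa'(\theta))k+A)$ and appeals to the Hsu--Robbins theorem (which needs the finite second moment from (\ref{as1})) to get a convergent majorant, then lets $A\to\infty$ by monotone convergence; you instead observe that $S_k-k\kappa'(\theta)-Lk$ is a random walk with negative drift, so by the strong law its supremum over $k\ge1$ is almost surely finite and $\bP(\sup_k(\bar S_k-Lk)>A)\to0$ uniformly in $n$. Your route is more elementary and in fact only needs a finite first moment for that step, at the price of losing the quantitative summable tail bound that Hsu--Robbins provides (which is irrelevant here, since only the qualitative limit is needed). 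One cosmetic remark that applies equally to the paper: the lemma as stated lists only (\ref{as1}) and $\theta\kappa'(\theta)<\kappa(\theta)$ as hypotheses, yet both your proof and the paper's use (\ref{as3}) for the $E_1$ term; this is harmless because the lemma is only invoked under the assumptions of Theorem \ref{prop1}, which include (\ref{as3}), but you should state it among your hypotheses for completeness.
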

\begin{proof}
By Jensen inequality, for any $k\in\N$,
\[\E(|\E(\bar{W}_n(\theta)|\mathscr{F}_k)-\E(\bar{W}_{n,A}(\theta)|\mathscr{F}_k)|)\leq \E(|\bar{W}_n(\theta)-\bar{W}_{n,A}(\theta)|).\]
Thus, (\ref{eq4}) implies (\ref{eq3}). We denote by $\|f\|_\infty$ the essential supremum of $|f|$.
Note that
\begin{equation}\label{eq5}
\begin{split}
&\E(|\bar{W}_n(\theta)-\bar{W}_{n,A}(\theta)|)\\
\leq &\|f\|_\infty\E\left(\sum_{|u|=n}e^{\theta V(u)-n\kappa(\theta)}1_{\{u\notin E_1(n,A)\}\cup\{u\notin E_2(n,A)\}}\right)\\
\leq &\|f\|_\infty\times\left[\E\left(\sum_{|u|=n}e^{\theta V(u)-n\kappa(\theta)}1_{\{u\notin E_1(n,A)\}}\right)+\E\left(\sum_{|u|=n}e^{\theta V(u)-n\kappa(\theta)}1_{\{u\notin E_2(n,A)\}}\right)\right].
\end{split}
\end{equation}

On the one hand, by many-to-one formula, we have
\begin{equation}\label{eq3.7}
\begin{split}
\E\left(\sum_{|u|=n}e^{\theta V(u)-n\kappa(\theta)}1_{\{u\notin E_2(n,A)\}}\right)
=&\bP(\exists 1\leq k\leq n, S_k-k\kappa'(\theta)>Lk+A)\\
\leq &\sum_{k=1}^\infty\bP(S_k>(L+\kappa'(\theta))k+A).
\end{split}
\end{equation}
Recall that $\E(S_1)=\kappa'(\theta)$, because $\E(S_1^2)<\infty$ and $L>0$, by Hsu-Robbins theorem (see \cite{erdos}),  we obtain
 \[\sum_{k=1}^\infty\bP(S_k>(L+\kappa'(\theta))k)<\infty.\]
Combined with (\ref{eq3.7}), by monotone convergence theorem, we conclude that
\begin{equation}\label{eq3.8}
\lim_{A\to\infty}\sup_{n\in\N}\E\left(\sum_{|u|=n}e^{\theta V(u)-n\kappa(\theta)}1_{\{u\notin E_2(n,A)\}}\right)=0.
\end{equation}

On the other hand, according to spinal decomposition theorem,
\begin{equation}\label{eq3.5}
\begin{split}
&\E\left(\sum_{|u|=n}e^{\theta V(u)-n\kappa(\theta)}1_{\{u\notin E_1(n,A)\}}\right)\\
=&\bar{\E}(1_{\{\xi_n=u\}}1_{\{u\notin E_1(n,A)\}})\\
=&\bar{\bP}\left(\exists 0\leq k\leq n-1, \sum_{i=1}^\infty e^{\theta (V(\xi_ki)-V(\xi_k))}\geq Ae^{a(k+1)}\right).
\end{split}
\end{equation}
Recall that $(V(\xi_ki)-V(\xi_k):i\geq 1)_{k\geq 0}$ are i.i.d under $\bar{\bP}$, we have
\begin{equation}\label{eq3.6}
\begin{split}
&\bar{\bP}\left(\exists 0\leq k\leq n-1, \sum_{i=1}^\infty e^{\theta (V(\xi_ki)-V(\xi_k))}\geq Ae^{a(k+1)}\right)\\
\leq &\sum_{k=1}^\infty\bar{\bP}\left(\log\sum_{|u|=1}e^{\theta V(u)}>ak+\log A\right).
\end{split}
\end{equation}
From assumption $\E(\sum_{|u|=1}e^{\theta V(u)}\log\sum_{|u|=1}e^{\theta V(u)} )<\infty$ and spinal decomposition theorem, we know that $\bar{\E}(\log\sum_{|u|=1}e^{\theta V(u)})<\infty$, combined with $a>0$, which implies that
\[\sum_{k=1}^\infty \bar{\bP}\left(\log\sum_{|u|=1}e^{\theta V(u)}>ak\right)<\infty.\]
Combined with (\ref{eq3.5}) and (\ref{eq3.6}), by monotone convergence theorem, we have
\[\lim_{A\to\infty}\sup_{n\in\N}\E\left(\sum_{|u|=n}e^{\theta V(u)-n\kappa(\theta)}1_{\{u\notin E_1(n,A)\}}\right)=0,\]
which, combined with (\ref{eq5}) and (\ref{eq3.8}), implies (\ref{eq4}).
\end{proof}
\begin{lemma}\label{second}
Fix $\theta>0$ such that $\kappa(\theta)<\infty$ and $\kappa'(\theta)<\infty$. For any $a,A,L>0$, $n\in\N$, $0\leq k\leq n-1$, we have
\[
\begin{split}
&\E_{x}\left[\left(\sum_{|v|=n-k}e^{\theta V(v)-(n-k)\kappa(\theta)}1_{\{v\in E_1(n-k,Ae^{ak})\cap E_2(n-k,A+Lk+\kappa'(\theta)k)\}}\right)^2\right]\\
\leq &e^{\theta x}(A+1)e^{\theta A+a}e^{(a+\theta L+\theta \kappa'(\theta))k}\sum_{i=0}^{n-k}e^{( a+\theta L+\theta\kappa'(\theta)-\kappa(\theta))i}.
\end{split}
\]

\end{lemma}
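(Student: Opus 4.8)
The plan is to expand the square and organize the resulting double sum according to the generation of the most recent common ancestor of the two particles. Write $m:=n-k$ and, for $|v|=m$, set $\phi(v):=e^{\theta V(v)-m\kappa(\theta)}1_{\{v\in E_1(m,Ae^{ak})\cap E_2(m,A+Lk+\kappa'(\theta)k)\}}$, so that the quantity to be bounded is $\E_x[(\sum_{|v|=m}\phi(v))^2]$. Expanding gives a diagonal term $\E_x[\sum_{|v|=m}\phi(v)^2]$ and an off-diagonal term $\sum_{v\neq w}\E_x[\phi(v)\phi(w)]$, and in the latter I would group the pairs $(v,w)$ by the generation $i\in\{0,\ldots,m-1\}$ at which their lineages split, so that the common ancestor $u=v_i=w_i$ lies at generation $i$ and $v,w$ descend from two distinct children of $u$. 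The diagonal is the easy part: dropping the $E_1$-constraint and applying the many-to-one formula (Corollary \ref{many}) with parameter $\theta$ turns it into $e^{\theta x}e^{-m\kappa(\theta)}\E_x[e^{\theta S_m}1_{\{S_j\le(L+\kappa'(\theta))j+A',\,j\le m\}}]$, where $A':=A+(L+\kappa'(\theta))k$; bounding $e^{\theta S_m}$ on this event by $e^{\theta A'}e^{\theta(L+\kappa'(\theta))m}$ produces a contribution that matches the $i=m$ term of the claimed sum, with room to spare.

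For the off-diagonal contribution at splitting generation $i$, I would first condition on $\F_{i+1}$. Given $\F_{i+1}$ the subtrees hanging below the two distinct children $u',u''$ of $u$ evolve independently, so $\E_x[\phi(v)\phi(w)\mid\F_{i+1}]$ factorizes; bounding the conditional expectation of each subtree sum by the first-moment estimate $\E[\sum_{v\succeq u'}\phi(v)\mid\F_{i+1}]\le e^{\theta V(u')-(i+1)\kappa(\theta)}$ (again via the many-to-one formula applied to the subtree, after discarding the subtree truncations, since the additive martingale $W_\cdot(\theta)$ has mean one) reduces the level-$i$ term to $e^{-2(i+1)\kappa(\theta)}\E_x[\sum_{|u|=i}1_{\{\ldots\}}(\sum_{j}e^{\theta V(uj)})^2]$, where the retained indicator keeps the $E_2$-trajectory constraint up to generation $i$ and the $E_1$-sibling constraint at generation $i$ and discards the rest. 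The role of $E_1$ is precisely to tame this squared sibling sum: on $E_1(m,Ae^{ak})$ one has $\sum_j e^{\theta(V(uj)-V(u))}<Ae^{ak}e^{a(i+1)}$, so I would bound $(\sum_j e^{\theta V(uj)})^2$ by $e^{\theta V(u)}Ae^{ak}e^{a(i+1)}\sum_j e^{\theta V(uj)}$, using the truncation threshold exactly once, and then take the conditional expectation of the remaining factor, which contributes the offspring mean $\E[\sum_j e^{\theta(V(uj)-V(u))}]=e^{\kappa(\theta)}$.

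What remains is $\E_x[\sum_{|u|=i}e^{2\theta V(u)}1_{\{V(u_j)\le(L+\kappa'(\theta))j+A',\,j\le i\}}]$, which the many-to-one formula with parameter $\theta$ turns into $e^{\theta x}e^{i\kappa(\theta)}\E_x[e^{\theta S_i}1_{\{\ldots\}}]$, and bounding $e^{\theta S_i}$ on the trajectory event by $e^{\theta A'}e^{\theta(L+\kappa'(\theta))i}$ closes the estimate. Collecting the exponential weights then gives, for each $i$, a bound of the form $e^{\theta x}e^{\theta A}e^{(a+\theta L+\theta\kappa'(\theta))k}\times(\text{constant})\times e^{(a+\theta L+\theta\kappa'(\theta)-\kappa(\theta))i}$; here the prefactor $e^{(a+\theta L+\theta\kappa'(\theta))k}$ emerges from combining $e^{ak}$ (carried by the shifted $E_1$-level $Ae^{ak}$) with $e^{\theta(L+\kappa'(\theta))k}$ (carried by the shifted $E_2$-level through $e^{\theta A'}$), while the geometric factor $e^{(a+\theta L+\theta\kappa'(\theta)-\kappa(\theta))i}$ comes from the split level. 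Summing over $i=0,\ldots,m-1$ together with the diagonal $i=m$ term then yields the stated inequality. I expect the main difficulty to be the bookkeeping: one must track how the two shifted truncation thresholds $Ae^{ak}$ and $A+(L+\kappa'(\theta))k$ propagate through the common-ancestor decomposition so that exactly one power of each shift survives in the prefactor, and then check that the residual constants (the offspring mean $e^{\kappa(\theta)}$, the boundary factor $e^{\theta A'}$, and the factor $A$ from the threshold) combine into the claimed $(A+1)e^{\theta A+a}$.
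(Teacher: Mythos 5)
Your proposal is correct in substance, and it reaches the bound by a more elementary route than the paper, though the two computations mirror each other term by term. The paper never expands the square: writing $Y_{n,k}$ for the truncated sum in the statement, it uses the spinal decomposition (the change of measure of Section \ref{sd}) to get
\[
\E_x\bigl[Y_{n,k}^2\bigr]=e^{\theta x}\,\bar{\E}_x\bigl(1_{\{\xi_{n-k}\in E(n,k,A)\}}Y_{n,k}\bigr)\le e^{\theta x}\,\bar{\E}_x\Bigl(1_{\{\xi_{n-k}\in E(n,k,A)\}}\sum_{|v|=n-k}e^{\theta V(v)-(n-k)\kappa(\theta)}\Bigr),
\]
and then splits the inner sum according to the generation $i$ at which $v$ branches off the spine. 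Your most-recent-common-ancestor expansion is the elementary dual of this: your diagonal term corresponds to the paper's spine term ($v=\xi_{n-k}$), your splitting generation corresponds to the branching-off generation, and in both arguments the $E_1$ threshold is spent exactly once (linearly) to tame the sibling sum at the split, the $E_2$ barrier converts $e^{\theta V}$ at the split into a deterministic factor, and each subtree below the split contributes its mean $1$ by the martingale property. What the spine buys is compactness: no pairing of particles, no separate diagonal term, and the prefactor $e^{\theta x}$ appears immediately from the change of measure. What your version buys is self-containedness: it needs only Corollary \ref{many}, conditioning on $\F_{i+1}$, and nonnegativity, and it makes completely explicit where each truncation is used.

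Two bookkeeping points that you flag at the end deserve an answer, and both are features your argument shares with the paper's own proof rather than new gaps. First, at split generation $i=0$ the trajectory event is vacuous, so bounding $e^{\theta S_0}=e^{\theta x}$ by $e^{\theta A'}$ tacitly requires $x\le A+(L+\kappa'(\theta))k$; the paper does the same thing implicitly, since its inserted indicator $1_{\{\xi_0\in B(0,k)\}}$ constrains the root position, which is not implied by $\xi_{n-k}\in E(n,k,A)$ (the event $E_2$ only constrains generations $\ge 1$). This is harmless in the one place the lemma is used (Lemma \ref{lem3}), where $x=V(u)$ with $u\in E_2(k,A)$, so that $x\le A+(L+\kappa'(\theta))k$ holds automatically. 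Second, your residual constants combine to $Ae^{\theta A+a}e^{-\kappa(\theta)}$ per off-diagonal term, i.e.\ there is an extra factor $e^{-\kappa(\theta)}$ compared with the target $(A+1)e^{\theta A+a}$: the two subtree first moments contribute $e^{-2(i+1)\kappa(\theta)}$, the offspring mean contributes $e^{\kappa(\theta)}$, and many-to-one returns $e^{i\kappa(\theta)}$, leaving $e^{-(i+1)\kappa(\theta)}$ in place of $e^{-i\kappa(\theta)}$. So the stated constant is recovered exactly when $\kappa(\theta)\ge 0$, and otherwise only up to the factor $e^{-\kappa(\theta)}$. A careful rendering of the paper's spine computation yields the very same factor (the subtree hanging at a child of $\xi_i$ has depth $n-k-i-1$, so the corresponding identity in the paper should carry $e^{\theta V(\xi_i)-(i+1)\kappa(\theta)}$ rather than $e^{\theta V(\xi_i)-i\kappa(\theta)}$); in either proof the discrepancy is immaterial, since all that is used of Lemma \ref{second} downstream is a bound decaying geometrically in $k$, uniformly in $n$.
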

\begin{proof}
Let $E(n,k,A):=E_1(n-k,Ae^{ak})\cap E_2(n-k,A+Lk+\kappa'(\theta)k)$. For fixed $x\in\R$ and $0\leq k\leq n-1$, by spinal decomposition theorem,
\begin{align}
 &\E_{x}\left[\left(\sum_{|v|=n-k}e^{\theta V(v)-(n-k)\kappa(\theta)}1_{\{v\in E(n,k,A)\}}\right)^2\right]\nonumber\\
=&e^{\theta x}\bar{\E}_x\left(1_{\{\xi_{n-k}\in E(n,k,A)\}}\sum_{|v|=n-k}e^{\theta V(v)-(n-k)\kappa(\theta)}1_{\{v\in E(n,k,A)\}}\right)\nonumber\\
\leq &e^{\theta x}\bar{\E}_x\left(1_{\{\xi_{n-k}\in E(n,k,A)\}}\sum_{|v|=n-k}e^{\theta V(v)-(n-k)\kappa(\theta)}\right).\label{eq9}
\end{align}
For $u,v\in \T$, we write $u\prec v$ or $v\succ u$, if $u$ is the ancestor of $v$. By decomposing $(V(u):|u|\leq n-k)$ along the spine, it is not hard to see
\[
\begin{split}
&\sum_{|v|=n-k}e^{\theta V(v)-(n-k)\kappa(\theta)}\\
=&e^{\theta V(\xi_{n-k})-(n-k)\kappa(\theta)}\\
&\quad+\sum_{i=0}^{n-k-1}\sum_{j\geq 1\atop j\neq \xi^{(i+1)}_{n-k}}\sum_{|v|=n-k\atop v\succ\xi_ij}e^{\theta (V(\xi_i)+V(\xi_ij)-V(\xi_i)+V(v)-V(\xi_ij))-(n-k)\kappa(\theta)},
\end{split}
\]
which implies that
\begin{equation}\label{eq8}
\begin{split}
&\bar{\E}_x\left(1_{\{\xi_{n-k}\in E(n,k,A)\}}\sum_{|v|=n-k}e^{\theta V(v)-(n-k)\kappa(\theta)}\right)\\
\leq &\bar{\E}_x(e^{\theta V(\xi_{n-k})-(n-k)\kappa(\theta)}1_{\{\xi_{n-k}\in B(n-k,k)\}})\\
&+\bar{\E}_x\left(\sum_{i=0}^{n-k-1}\sum_{j\geq 1\atop j\neq\xi^{(i+1)}_{n-k}}\sum_{|v|=n-k\atop v\succ\xi_ij}e^{\theta (V(\xi_i)+V(\xi_ij)-V(\xi_i)+V(v)-V(\xi_ij))-(n-k)\kappa(\theta)}1_{\{\xi_i\in B(i,k)\}}\right),
\end{split}
\end{equation}
where $B(i,k):=\{|u|=i:V(u)-(k+i)\kappa'(\theta)\leq L(k+i)+A, \sum_{j\geq 1}e^{\theta (V(uj)-V(u))}\leq Ae^{a(k+i+1)}\}.$
By independence, for $0\leq i\leq n-k-1$,
\[
\begin{split}
&\bar{\E}_x\left(\sum_{j\geq 1\atop j\neq \xi^{(i+1)}_{n-k}}\sum_{|v|=n-k\atop v\succ\xi_ij}e^{\theta (V(\xi_i)+V(\xi_ij)-V(\xi_i)+V(v)-V(\xi_ij))-(n-k)\kappa(\theta)}1_{\{\xi_i\in B(i,k)\}}\right)\\
=&\bar{\E}_x\left(e^{\theta V(\xi_i)-i\kappa(\theta)}\sum_{j\geq 1\atop j\neq\xi^{(i+1)}_{n-k}}e^{\theta(V(\xi_ij)-V(\xi_i))}1_{\{\xi_i\in B(i,k)\}}\right),
\end{split}
\]
which, combined with (\ref{eq9}) and (\ref{eq8}), implies that
\[
\begin{split}
&\E_{x}\left(\sum_{|v|=n-k}e^{\theta V(v)-(n-k)\kappa(\theta)}1_{\{v\in E(n,k,A)\}}\right)^2\\
\leq&  e^{\theta x}\bar{\E}_x(e^{\theta V(\xi_{n-k})-(n-k)\kappa(\theta)}1_{\{\xi_{n-k}\in B(n-k,k)\}})\\
&\quad+e^{\theta x}\sum_{i=0}^{n-k-1}\bar{\E}_x\left(e^{\theta V(\xi_i)-i\kappa(\theta)}\sum_{j\geq 1}e^{\theta(V(\xi_ij)-V(\xi_i))}1_{\{\xi_i\in B(i,k)\}}\right)\\
\leq &e^{\theta x}(A+1)e^{\theta A+a}e^{(a+\theta L+\theta \kappa'(\theta))k}\sum_{i=0}^{n-k}e^{( a+\theta L+\theta\kappa'(\theta)-\kappa(\theta))i},
\end{split}
\]
which completes the proof.
\end{proof}

\begin{lemma}\label{lem3}
Fix $\theta>0$ such that $\kappa(\theta)<\infty$ and $\theta\kappa'(\theta)<\kappa(\theta)$. Then for any $A>0$,
\[\lim_{k\to\infty}\limsup_{n\to\infty}\E((\bar{W}_{n,A}(\theta)-\E(\bar{W}_{n,A}(\theta)|\mathscr{F}_k))^2)=0.\]
\end{lemma}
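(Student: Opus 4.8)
The plan is to identify the left-hand side with an expected conditional variance and then exploit the branching property to reduce everything to a single generation-$k$ subtree, where Lemma \ref{second} applies directly. Since $\E(\bar{W}_{n,A}(\theta)\mid\mathscr{F}_k)$ is the $L^2$-projection of $\bar{W}_{n,A}(\theta)$ onto $\mathscr{F}_k$, I first rewrite
\[\E\left((\bar{W}_{n,A}(\theta)-\E(\bar{W}_{n,A}(\theta)\mid\mathscr{F}_k))^2\right)=\E\left(\mathrm{Var}(\bar{W}_{n,A}(\theta)\mid\mathscr{F}_k)\right).\]
Grouping the particles at time $n$ according to their ancestor $w$ at time $k$, I write $\bar{W}_{n,A}(\theta)=\sum_{|w|=k}X_w$, where $X_w$ collects the contributions of the descendants of $w$. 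Given $\mathscr{F}_k$, the subtrees rooted at distinct $w$ are independent by the branching property, so all cross terms vanish and
\[\mathrm{Var}(\bar{W}_{n,A}(\theta)\mid\mathscr{F}_k)=\sum_{|w|=k}\mathrm{Var}(X_w\mid\mathscr{F}_k)\leq\sum_{|w|=k}\E(X_w^2\mid\mathscr{F}_k).\]

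The heart of the argument is to bound $\E(X_w^2\mid\mathscr{F}_k)$ by Lemma \ref{second}. For this I would split the indicator $1_{\{u\in E_1(n,A)\cap E_2(n,A)\}}$ into an $\mathscr{F}_k$-measurable prefix factor, depending only on $w$ and its ancestors, and a suffix factor depending only on the subtree of $w$. The bookkeeping, which I expect to be the main (if routine) obstacle, is to check that once the subtree of $w$ is viewed as an independent branching random walk started at $V(w)$, the suffix constraints coming from $E_1(n,A)$ and $E_2(n,A)$ are exactly $E_1(n-k,Ae^{ak})$ and $E_2(n-k,A+Lk+\kappa'(\theta)k)$: the shift by $k$ generations turns the threshold $Ae^{a(j+1)}$ into $Ae^{ak}e^{a(j'+1)}$ and the line $Lj+A$ into $Lj'+(A+Lk+\kappa'(\theta)k)$, matching the parameters in Lemma \ref{second}. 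Bounding $f$ by $\|f\|_\infty$ and pulling out $e^{-k\kappa(\theta)}$ from $e^{\theta V(u)-n\kappa(\theta)}=e^{-k\kappa(\theta)}e^{\theta V(u)-(n-k)\kappa(\theta)}$, Lemma \ref{second} with $x=V(w)$ then yields
\[\E(X_w^2\mid\mathscr{F}_k)\leq C\|f\|_\infty^2(A+1)e^{\theta A+a}\,e^{-2k\kappa(\theta)}e^{(a+\theta L+\theta\kappa'(\theta))k}e^{\theta V(w)},\]
where the geometric sum $\sum_{i=0}^{n-k}e^{(a+\theta L+\theta\kappa'(\theta)-\kappa(\theta))i}$ has been absorbed into the constant $C$; this is legitimate precisely because $a,L$ were chosen so that $a+\theta L+\theta\kappa'(\theta)-\kappa(\theta)<0$, which makes the sum bounded uniformly in $n$ and $k$.

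Finally I would sum over $|w|=k$ and take expectations, using $\E(\sum_{|w|=k}e^{\theta V(w)})=e^{k\kappa(\theta)}$ (the mean-one property of the additive martingale $W_k(\theta)$). The exponent in $k$ then collapses to $-2\kappa(\theta)+(a+\theta L+\theta\kappa'(\theta))+\kappa(\theta)=a+\theta L+\theta\kappa'(\theta)-\kappa(\theta)$, giving
\[\E\left((\bar{W}_{n,A}(\theta)-\E(\bar{W}_{n,A}(\theta)\mid\mathscr{F}_k))^2\right)\leq C\|f\|_\infty^2(A+1)e^{\theta A+a}\,e^{(a+\theta L+\theta\kappa'(\theta)-\kappa(\theta))k}.\]
This bound is uniform in $n$, so taking $\limsup_{n\to\infty}$ leaves it unchanged, and since the exponent is strictly negative it tends to $0$ as $k\to\infty$, which is the claim. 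The only genuinely delicate point is the matching of truncation parameters across the generation-$k$ split; the probabilistic content (conditional independence of subtrees and the single-subtree second moment) is supplied entirely by the branching property and Lemma \ref{second}.
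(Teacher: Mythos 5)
Your proposal is correct and takes essentially the same route as the paper: both reduce the conditional variance to a sum over generation-$k$ ancestors of single-subtree conditional second moments (the cross terms cancelling by conditional independence of subtrees, which the paper verifies by explicit expansion of $\E(\bar{W}_{n,A}^2(\theta)\mid\mathscr{F}_k)$ and $\E(\bar{W}_{n,A}(\theta)\mid\mathscr{F}_k)^2$), then bound these via Lemma \ref{second} with exactly the shifted truncation parameters you describe, and conclude using the mean-one property of the additive martingale together with the strictly negative exponent $a+\theta L+\theta\kappa'(\theta)-\kappa(\theta)<0$. The only difference is presentational: you phrase the decomposition through the conditional-variance identity, while the paper writes out the diagonal and off-diagonal terms explicitly before cancelling them.
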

\begin{proof}
By Lemma $\ref{second}$ and $f\in C_b$, we know that
\[\E(\bar{W}_{n,A}^2(\theta))<\infty,\]
which yields that
\begin{equation}\label{eq10}
\E((\bar{W}_{n,A}(\theta)-\E(\bar{W}_{n,A}(\theta)|\mathscr{F}_k))^2)=\E(\E(\bar{W}_{n,A}^2(\theta)|\mathscr{F}_k)-(\E(\bar{W}_{n,A}(\theta)|\mathscr{F}_k))^2).
\end{equation}
Therefore, we shall compute $\E(\bar{W}_{n,A}^2(\theta)|\mathscr{F}_k)$ and $\E(\bar{W}_{n,A}(\theta)|\mathscr{F}_k)^2$ respectively. Recall that $E(n,k,A)=E_1(n-k,Ae^{ak})\cap E_2(n-k,A+Lk+\kappa'(\theta)k)$.
 By Markov property, we have
\begin{align}
\E&(\bar{W}_{n,A}^2(\theta)|\mathscr{F}_k)\nonumber\\
&=\sum_{|u|=|v|=k,u\neq v}e^{\theta V(u)+\theta V(v)-2k\kappa(\theta)}1_{\{u,v\in E_1(k,A)\cap E_2(k,A)\}}H_{n,k,A}(V(u))H_{n,k,A}(V(v))\nonumber\\
&+\sum_{|u|=k}e^{2\theta V(u)-2k\kappa(\theta)}1_{\{u\in E_1(k,A)\cap E_2(k,A)\}}G_{n,k,A}(V(u))\label{eq6}
\end{align}
where for $x\in\R$
\[H_{n,k,A}(x):=e^{-\theta x}\E_{x}\left(\sum_{|v|=n-k}e^{\theta V(v)-(n-k)\kappa(\theta)}f\left(\frac{V(v)-n\kappa'(\theta)}{\sqrt{n}}\right)1_{\{v\in E(n,k,A)\}}\right)\]
and
\[G_{n,k,A}(x):=e^{-2\theta x}\E_{x}\left[\left(\sum_{|v|=n-k}e^{\theta V(v)-(n-k)\kappa(\theta)}f\left(\frac{V(v)-n\kappa'(\theta)}{\sqrt{n}}\right)1_{\{v\in E(n,k,A)\}}\right)^2\right].\]
On the other hand, by Markov property again, we obtain
\begin{align}
&\E(\bar{W}_{n,A}(\theta)|\mathscr{F}_k)^2\nonumber\\
=&\left(\sum_{|u|=k}e^{\theta V(u)-k\kappa(\theta)}1_{\{u\in E_1(k,A)\cap E_2(k,A)\}}H_{n,k,A}(V(u))\right)^2\nonumber\\
=&\sum_{|u|=|v|=k,u\neq v}e^{\theta V(u)+\theta V(v)-2k\kappa(\theta)}1_{\{u,v\in E_1(k,A)\cap E_2(k,A)\}}H_{n,k,A}(V(u))H_{n,k,A}(V(v))\nonumber\\
&+\sum_{|u|=k}e^{2\theta V(u)-2k\kappa(\theta)}1_{\{u\in E_1(k,A)\cap E_2(k,A)\}}(H_{n,k,A}(V(u)))^2 .\label{eq7}
\end{align}
Combining (\ref{eq6}) with (\ref{eq7}), we have
\begin{equation}\label{eq11}
\begin{split}
&\E(\bar{W}_{n,A}(\theta)^2|\mathscr{F}_k)-\E(\bar{W}_{n,A}(\theta)|\mathscr{F}_k)^2\\
\leq&\sum_{|u|=k}e^{2\theta V(u)-2k\kappa(\theta)}1_{\{u\in E_1(k,A)\cap E_2(k,A)\}}G_{n,k,A}(V(u)).
\end{split}
\end{equation}
Recall that $A>0$, $a>0$, $L>0$ such that $a+\theta L+\theta\kappa'(\theta)-\kappa(\theta)<0$, by Lemma \ref{second}, we conclude that
\[
\begin{split}
G_{n,k,A}(x)\leq &\|f\|_\infty^2 (A+1)e^{\theta A-\theta x+a}e^{(a+\theta L+\theta \kappa'(\theta))k}\sum_{i=0}^{n-k}e^{( a+\theta L+\theta\kappa'(\theta)-\kappa(\theta))i}\\
\leq &\frac{\|f\|_\infty^2(A+1)e^{\theta A-\theta x+a}}{1-e^{a+\theta L+\theta\kappa'(\theta)-\kappa(\theta)}}e^{(a+\theta L+\theta \kappa'(\theta))k}
= C(A,f)e^{-\theta x} e^{(a+\theta L+\theta \kappa'(\theta))k}
\end{split}
\]
where $C(A,f):=\frac{\|f\|_\infty^2Ae^{\theta A+a}}{1-e^{a+\theta L+\theta\kappa'(\theta)-\kappa(\theta)}}$.
Thus, we have
\begin{equation}\label{eq12}
\begin{split}
&\limsup_{n\to\infty}\E\left(\sum_{|u|=k}e^{2\theta V(u)-2k\kappa(\theta)}1_{\{u\in E_1(k,A)\cap E_2(k,A)\}}G_{n,k,A}(V(u))\right)\\
\leq &C(A,f)e^{(a+\theta L+\theta \kappa'(\theta))k}\E\left(\sum_{|u|=k}e^{2\theta V(u)-2k\kappa(\theta)}1_{\{u\in E_1(k,A)\cap E_2(k,A)\}}e^{-\theta V(u)}\right)\\
\leq &C(A,f)e^{(a+\theta L+\theta \kappa'(\theta))k}\E\left(\sum_{|u|=k}e^{\theta V(u)-2k\kappa(\theta)}\right)=C(A,f)e^{(a+\theta L+\theta\kappa'(\theta)-\kappa(\theta))k}
\end{split}
\end{equation}
where the last equality follows from the fact that $W_n(\theta)$ is a martingale with $\E(W_0(\theta))=1$.
Combining (\ref{eq10}) with (\ref{eq11}) and (\ref{eq12}), we conclude that
\[\lim_{k\to\infty}\limsup_{n\to\infty}\E[(\bar{W}_{n,A}-\E(\bar{W}_{n,A}|\mathscr{F}_k))]^2=0.\qedhere\]
\end{proof}
We are now able to complete the proof of Theorem \ref{prop1}.
\begin{proof}[Proof of Theorem \ref{prop1}]
By H\"older inequality, Lemma \ref{lem3} implies that
\[\lim_{k\to\infty}\limsup_{n\to\infty}\E(|\bar{W}_{n,A}(\theta)-\E(\bar{W}_{n,A}(\theta)|\mathscr{F}_k)|)=0.\]
By triangular inequality, we know that
\[
\begin{split}
&\E(|\bar{W}_n(\theta)-W(\theta)\E(f(N))|)\\
\leq&\E(|\bar{W}_n(\theta)-\bar{W}_{n,A}(\theta)|)+\E(|\bar{W}_{n,A}(\theta)-\E(\bar{W}_{n,A}(\theta)|\mathscr{F}_k)|)\\
&+\E\left(\left|\E(\bar{W}_{n,A}(\theta)|\mathscr{F}_k)-\E(\bar{W}_n(\theta)|\mathscr{F}_k)\right|\right)+\E(|\E(\bar{W}_n(\theta)|\mathscr{F}_k)-W(\theta)\E(f(N))|),
\end{split}
\]
which tells us that
\[
\begin{split}
&\limsup_{n\to\infty}\E(|\bar{W}_n(\theta)-W(\theta)\E(f(N))|)\\
\leq&2\limsup_{n\to\infty}\E(|\bar{W}_n(\theta)-\bar{W}_{n,A}(
\theta)|)+\limsup_{n\to\infty}\E(|\bar{W}_{n,A}(\theta)-\E(\bar{W}_{n,A}(\theta)|\mathscr{F}_k)|)\\
&+\limsup_{n\to\infty}\E\left(\left|\E(\bar{W}_n(\theta)|\mathscr{F}_k)-W(\theta)\E(f(N))\right|\right).
\end{split}
\]
Letting $k\to\infty$, by Lemma \ref{lem1} and Lemma \ref{lem3},
\[\limsup_{n\to\infty}\E(|\bar{W}_n(\theta)-W(\theta)\E(f(N))|)\leq 2\limsup_{n\to\infty}\E(|\bar{W}_n(\theta)-\bar{W}_{n,A}(\theta)|).\]
Finally, letting $A\to\infty$, by Lemma \ref{lem2}, we complete the proof.
\end{proof}
\subsection{Extreme values of a branching random walk}\label{sD}
We recall that $(V(u):u\in\T)$ is a branching random walk with reproduction law $\mathcal{L}$. For convenience and to avoid degenerate cases, we always assume that $\bP(\mathcal{L}(\R)>0)=1$ and $\bP(\mathcal{L}(\R)=1)<1$. We assume that there exists $\theta^*>0$ such that assumptions (\ref{as6}) and (\ref{as7}) hold, we define
\[Z_n:=\sum_{|u|=n}(\kappa'(\theta^*)n-V(u))e^{\theta^* V(u)-n\kappa(\theta^*)}.\]
 Recall that $\F_n=\sigma(V(u):|u|\leq n)$. It is not hard to check that $(Z_n)_{n\geq 0}$ is a martingale with respect to $\F_n$, usually called derivative martingale. Biggins and Kyprianou\cite{bigginsk} proved that $Z_n$ converges almost surely to a finite nonnegative limit, denoted by $Z$. And A\"id\'ekon\cite{aidekon2} and Chen\cite{chen} proved that $\bP(Z>0)=1$ if and only if assumption (\ref{as8}) holds. Moreover, under additional assumption (\ref{as4}), A\"id\'ekon\cite{aidekon2} showed that there exists $C^*>0$ such that
\begin{equation}\label{ai2}
\lim_{n\to\infty}\bP(M_n-m_n\leq y)=\E(e^{-C^*Ze^{-\theta^* y}})=:w(y),\quad\text{for any }y\in\R,
\end{equation}
where $m_n:=\kappa'(\theta^*)n-\frac{3}{2\theta^*}\log n$.

We define the extremal process $\mathcal{E}_n:=\sum_{|u|=n}\delta_{V(u)-m_n}$. Madaule\cite{madaule} proved that $\mathcal{E}_n$ converges in law to some randomly shifted decorated Poisson point process.
\begin{theorem}[Theorem 1.1 in \cite{madaule}]\label{th3}
If there exists $\theta^*>0$ such that assumptions (\ref{as4}), (\ref{as6}), (\ref{as7}) and (\ref{as8}) hold, then $\mathcal{E}_n$ converges in law to a randomly shifted decorated Poisson point process $\mathcal{E}$. More precisely, there exists a well-defined point process $\mathcal{D}$ on $(-\infty,0]$ such that
\[\mathcal{E}:=\sum_{i,j\geq 1}\delta_{p_i+d_j^{(i)}},\]
where $p_i$ are the atoms of a Poisson point process with random intensity $C^*\theta^*Ze^{-\theta^* x}\mathrm{d}x$, and $d_j^{(i)}$ are atoms of $\mathcal{D}^{(i)}$ that are independent copies of $\mathcal{D}$ and independent of $p_i$, where $C^*$ is defined in (\ref{ai2}).
\end{theorem}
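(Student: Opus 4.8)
The plan is to identify the limit through Laplace functionals. For point processes on $\R$ having only finitely many atoms above any level it suffices to work with nonnegative $f\in C_b(\R)$ whose support is contained in some $[-K,\infty)$, and to show that $\E[e^{-\mathcal{E}_n(f)}]$ converges to the Laplace functional of the announced randomly shifted decorated Poisson point process, which (conditionally on the derivative--martingale limit $Z$) equals $\exp(-\beta_f Z)$ with
$$\beta_f:=C^*\theta^*\int_{\R}\Big(1-\E\big[e^{-\mathcal{D}(f(x+\cdot))}\big]\Big)e^{-\theta^* x}\,\mathrm{d}x.$$
First I would prove tightness of $(\mathcal{E}_n)_{n\ge1}$ in the vague topology: by the many-to-one formula (Corollary \ref{many}) the expected number of atoms of $\mathcal{E}_n$ in $[-K,\infty)$ is bounded uniformly in $n$, so together with the convergence of the maximum (\ref{ai2}) every subsequence admits a further subsequence along which $\mathcal{E}_n$ converges in law to some $\mathcal{E}$; the task is then to show that every such subsequential limit has Laplace functional $\E[e^{-\beta_f Z}]$.

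The mechanism is a branching decomposition at a fixed generation $k$. Writing $\mathcal{E}_n=\sum_{|u|=k}\mathcal{E}_n^{(u)}$, where conditionally on $\F_k$ the $\mathcal{E}_n^{(u)}$ are independent and $\mathcal{E}_n^{(u)}$ is the extremal process of an independent length-$(n-k)$ walk shifted by $V(u)+m_{n-k}-m_n$, and using $m_n-m_{n-k}\to\kappa'(\theta^*)k$ for fixed $k$, letting $n\to\infty$ along the subsequence gives the self-consistency relation
$$\E\big[e^{-\mathcal{E}(f)}\big]=\E\bigg[\exp\Big(-\sum_{|u|=k}g\big(V(u)-\kappa'(\theta^*)k\big)\Big)\bigg],\qquad g(x):=-\log\E\big[e^{-\mathcal{E}(f(x+\cdot))}\big].$$
Here $g\ge0$, with $g(x)\to0$ as $x\to-\infty$ and $g(x)\to\infty$ as $x\to+\infty$. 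The crucial input is the precise left tail of $g$: because of the heavy right tail of the limit (equivalently $\bP(Z>z)\sim c/z$), one expects the extra linear factor
$$g(x)\sim\beta_f\,|x|\,e^{\theta^* x},\qquad x\to-\infty.$$

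This linear factor is exactly what converts the critical additive weight into the derivative martingale. Indeed, since the relevant particles satisfy $V(u)<\kappa'(\theta^*)k$ (the maximum at time $k$ sitting below $\kappa'(\theta^*)k$), and using $\theta^*\kappa'(\theta^*)=\kappa(\theta^*)$,
$$\sum_{|u|=k}g\big(V(u)-\kappa'(\theta^*)k\big)\approx\beta_f\sum_{|u|=k}\big(\kappa'(\theta^*)k-V(u)\big)e^{\theta^* V(u)-k\kappa(\theta^*)}=\beta_f\,Z_k,$$
which converges to $\beta_f Z$ by the Biggins--Kyprianou convergence $Z_k\to Z$ \cite{bigginsk}. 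Sending $k\to\infty$ in the self-consistency relation then yields $\E[e^{-\mathcal{E}(f)}]=\E[e^{-\beta_f Z}]$, the Laplace functional of the claimed limit; the random shift is carried by $Z$ and matches (\ref{ai2}) at the level of the maximum, while $\beta_f$ is realized through the decoration law $\mathcal{D}$.

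The hardest part is establishing the two pillars above: the precise left-tail asymptotics $g(x)\sim\beta_f|x|e^{\theta^* x}$, and the uniform replacement of $g$ by this asymptotic over the whole range where $Z_k$ concentrates (particles at distance of order $\sqrt{k}$ below $\kappa'(\theta^*)k$), while discarding the exponentially rare particles above $\kappa'(\theta^*)k$. Both rest on delicate barrier (ballot) estimates for the ancestral trajectories at the critical parameter $\theta^*$ and on a Seneta--Heyde type control of $W_k(\theta^*)$, in the same truncation spirit as Lemma \ref{lem2} but now at criticality. Finally one must construct the decoration $\mathcal{D}$ on $(-\infty,0]$ and show that the constant produced by the tail of $g$ coincides with the decoration integral defining $\beta_f$; concretely $\mathcal{D}$ arises as the limiting law of the cluster $\sum_{|u|=n}\delta_{V(u)-M_n}$ seen from the tip, conditioned on $M_n$ being atypically large, and identifying this limit together with its independence across Poisson atoms is the remaining technical core.
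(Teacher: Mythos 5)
You should first note that the paper contains no proof of Theorem \ref{th3}: it is imported verbatim as Theorem 1.1 of Madaule \cite{madaule} and used as a black box in Section \ref{proofofth2}, so the only meaningful comparison is with Madaule's own argument. Your strategy (subsequential limits, a self-consistency relation obtained by branching at a fixed generation $k$, and identification of the limit through the left tail of $g$, which regenerates the derivative martingale $Z_k$) is the Lalley--Sellke / Arguin--Bovier--Kistler / Biskup--Louidor invariance route, which is genuinely different from Madaule's: he works directly with the Laplace functionals of $\mathcal{E}_n$, building on A\"id\'ekon's barrier and renewal estimates, and constructs the decoration from the process seen from the maximum. That said, your sketch has a step that fails outright: the tightness argument. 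At the critical parameter the first moment does \emph{not} stay bounded. Indeed, by Corollary \ref{many} with $\theta=\theta^*$ and $\theta^*\kappa'(\theta^*)=\kappa(\theta^*)$, writing $T_n:=S_n-n\kappa'(\theta^*)$,
\begin{equation*}
\E\big[\mathcal{E}_n([-K,\infty))\big]=\E\Big[e^{-\theta^* T_n}1_{\{T_n\ge -\frac{3}{2\theta^*}\log n-K\}}\Big]\asymp n^{-1/2}\int_0^{\frac{3}{2\theta^*}\log n+K}e^{\theta^* y}\,\mathrm{d}y\asymp e^{\theta^* K}\,n,
\end{equation*}
since the exponential weight is largest at the bottom of the window, where the local CLT density is still of order $n^{-1/2}$. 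Tightness of $\mathcal{E}_n([-K,\infty))$ is true, but only after killing trajectories that rise above the line $s\mapsto\kappa'(\theta^*)s+C$ (ballot estimates) --- exactly the barrier machinery you postpone to the final paragraph, so it cannot be invoked at the start.

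The second gap is structural: your constant $\beta_f$ is defined through $C^*$ and the decoration $\mathcal{D}$, i.e.\ through attributes of the very limit whose existence is the content of the theorem. For an arbitrary subsequential limit $\mathcal{E}$ you know neither that $g(x)=-\log\E[e^{-\mathcal{E}(f(x+\cdot))}]$ has a tail of the form $\beta|x|e^{\theta^* x}$, nor what $\beta$ is; completing your argument as written would only show that each subsequential limit satisfies the fixed-point identity $\E[e^{-\mathcal{E}(f)}]=\E[e^{-\beta(\mathcal{E},f)Z}]$ with its own, possibly subsequence-dependent, constant and decoration. Uniqueness of the limit does not follow. To close this one must prove the tail asymptotics of the prelimit Laplace functionals with a subsequence-independent constant --- which is precisely the hard analytic core of A\"id\'ekon's and Madaule's work (barrier estimates, renewal function, Seneta--Heyde scaling) --- or supply a separate characterization argument; your sketch asserts this asymptotic rather than proving it, and likewise defers the construction of $\mathcal{D}$ and the independence of decorations across Poisson atoms. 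In short, the proposal correctly identifies the shape of the answer and the mechanism by which $Z$ appears, but the two load-bearing steps are, respectively, false as stated and circular as stated.
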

Randomly shifted decorated Poisson point processes were further studied by Subag and Zeitouni\cite{subag}. Let $\max\mathcal{E}:=\max_{\ell\in\mathcal{E}} \ell.$ By \cite{madaule}, we know that $\max\mathcal{D}=0$ almost surely. Therefore, we know that for any $y\in\R$, $\bP(\mathcal{E}((y,\infty))<\infty)=1$. Combined with Lemma 4.4 in \cite{berestycki} and (\ref{ai2}), we obtain the following lemma.
\begin{lemma}\label{localiness}
Under the assumptions of Theorem \ref{th3}, we have $(\mathcal{E}_n,M_n-m_n)$ converges in law to $(\mathcal{E},\max\mathcal{E})$.
\end{lemma}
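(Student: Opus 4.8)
The plan is to recognize that $M_n-m_n=\max\mathcal{E}_n$, so the statement is precisely the joint convergence in law $(\mathcal{E}_n,\max\mathcal{E}_n)\to(\mathcal{E},\max\mathcal{E})$, and to deduce it from the two marginal facts already in hand: the vague convergence in law $\mathcal{E}_n\to\mathcal{E}$ from Theorem \ref{th3}, and the convergence of $M_n-m_n$ with limiting distribution $w$ from (\ref{ai2}). The one genuine issue is that $\mu\mapsto\max\mu$ is not continuous for the vague topology, so vague convergence alone does not transfer the maximum; the whole point of Lemma 4.4 in \cite{berestycki} is to bridge this gap once the maximum is shown to be localized and non-escaping.

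First I would verify the hypotheses of Lemma 4.4. The localization of the limiting maximum comes from $\max\mathcal{D}=0$ almost surely: every decoration atom lies in $(-\infty,0]$, so an atom $p_i+d_j^{(i)}$ can exceed a level $y$ only if $p_i>y$, of which there are almost surely finitely many; hence $\mathcal{E}((y,\infty))<\infty$ and $\max\mathcal{E}=\max_i p_i<\infty$ almost surely. The needed tightness at $+\infty$ along the sequence I would read off from (\ref{ai2}): since $\bP(\max\mathcal{E}_n\leq y)\to w(y)$ and $w(y)\to1$ as $y\to\infty$, we get $\lim_{y\to\infty}\limsup_{n\to\infty}\bP(\max\mathcal{E}_n>y)=0$, which rules out escape of the maximum to $+\infty$.

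Next I would identify the limiting maximum so that the marginals match. Conditioning on the random intensity in Theorem \ref{th3} and using again that $\max\mathcal{D}=0$ (so decorations never create the running maximum) gives $\bP(\max\mathcal{E}\leq y\mid Z)=\exp(-\int_y^\infty C^*\theta^*Ze^{-\theta^* x}\,\mathrm{d}x)=\exp(-C^*Ze^{-\theta^* y})$, hence $\bP(\max\mathcal{E}\leq y)=\E(e^{-C^*Ze^{-\theta^* y}})=w(y)$, exactly the marginal limit of $M_n-m_n$ in (\ref{ai2}). With the localization and tightness checked and the marginal maximum laws identified, Lemma 4.4 in \cite{berestycki} upgrades the vague convergence $\mathcal{E}_n\to\mathcal{E}$ to the joint convergence $(\mathcal{E}_n,\max\mathcal{E}_n)\to(\mathcal{E},\max\mathcal{E})$, which is the claim.

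The hard part will be exactly the no-escape-of-mass control at $+\infty$: this is the single point where vague convergence fails to pin down the maximum, and it is what (\ref{ai2}) supplies. If I had to argue by hand rather than cite Lemma 4.4, I would approximate $1_{\{\max\mathcal{E}_n\leq y\}}=1_{\{\mathcal{E}_n((y,\infty))=0\}}$ by $e^{-\lambda\mathcal{E}_n(g)}$ for a nonnegative continuous $g$ supported on $(y,\infty)$, apply Theorem \ref{th3} to the admissible test function $f+\lambda g$ at fixed $\lambda$, and then let $\lambda\to\infty$, using the tightness above to justify exchanging the limits in $n$ and $\lambda$; but quoting Lemma 4.4 with the hypotheses verified as above is the clean route.
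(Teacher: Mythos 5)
Your proposal is correct and follows essentially the same route as the paper: the paper also derives the lemma by noting that $\max\mathcal{D}=0$ almost surely (from \cite{madaule}) forces $\bP(\mathcal{E}((y,\infty))<\infty)=1$ for every $y$, and then invokes Lemma 4.4 in \cite{berestycki} together with (\ref{ai2}) to upgrade the vague convergence of Theorem \ref{th3} to the joint convergence $(\mathcal{E}_n,M_n-m_n)\to(\mathcal{E},\max\mathcal{E})$. Your additional checks (identifying $\bP(\max\mathcal{E}\leq y)=\E(e^{-C^*Ze^{-\theta^*y}})=w(y)$ and the no-escape-of-mass control at $+\infty$) are exactly the hypotheses the paper implicitly verifies, just spelled out in more detail.
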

Finally, we introduce a result about uniform convergence of Laplace functional of point processes. Let $C_\uparrow(\R)$ be the collection of continuous and non-decreasing functions $\phi$ whose support has a finite left bound and such that for some $a\in\R$, $\phi(x)$ is positive constant for $x>a$.
\begin{lemma}\label{lemslow2}
Given point processes $(\mathcal{P}_n:n\geq 1)$ and $\mathcal{P}$ on $\R$ such that $(\max\mathcal{P}_n,\mathcal{P}_n)$ converges in law to $(\max\mathcal{P},\mathcal{P})$, $\lim_{n\to\infty}\mathcal{P}_n(\R)=\mathcal{P}(\R)$ a.s. and $\mathcal{P}_n((0,\infty))<\infty$ a.s for $n\in\N\cup\{\infty\}$. Then for any $\phi\in C_\uparrow(\R)$, we have
\begin{equation}\label{eql}
\lim_{n\to\infty}\sup_{y\in\R}|\E(e^{-\int_{\R}\phi(x+y)\mathcal{P}_n(\mathrm{d}x)})-\E(e^{-\int_{\R}\phi(x+y)\mathcal{P}(\mathrm{d}x)})|=0.
\end{equation}
\end{lemma}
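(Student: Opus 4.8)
The plan is to analyze, for a fixed $\phi\in C_\uparrow(\R)$, the deterministic functions
\[
F_n(y):=\E\!\left(e^{-\int_\R \phi(x+y)\mathcal{P}_n(\mathrm{d}x)}\right),\qquad n\in\N\cup\{\infty\},
\]
with the convention $\mathcal{P}_\infty=\mathcal{P}$, and to prove that $F_n\to F_\infty$ uniformly on $\R$. The starting observation is that, since $\phi$ is non-decreasing, $\phi(\cdot+y_1)\le\phi(\cdot+y_2)$ pointwise whenever $y_1\le y_2$, so $\int\phi(x+y)\mathcal{P}_n(\mathrm{d}x)$ is non-decreasing in $y$ and each $F_n$ is non-increasing. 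Thus $1-F_n$ is a (sub-probability) distribution function, and the claim is exactly the conclusion of P\'olya's theorem: non-decreasing functions converging pointwise to a continuous limit, with matching limits at $\pm\infty$, converge uniformly. It therefore suffices to establish three facts: (a) $F_n(y)\to F_\infty(y)$ for every fixed $y\in\R$; (b) $F_\infty$ is continuous; and (c) the endpoint limits $\lim_{y\to\pm\infty}F_n(y)$ converge to those of $F_\infty$.

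For (a), I would apply the Skorokhod representation theorem to the convergence $(\max\mathcal{P}_n,\mathcal{P}_n)\to(\max\mathcal{P},\mathcal{P})$ in law, so that on a common space $\mathcal{P}_n\to\mathcal{P}$ vaguely and $\max\mathcal{P}_n\to\max\mathcal{P}=:m$ almost surely, with $m<\infty$ since $\mathcal{P}((0,\infty))<\infty$. Writing $c:=\lim_{x\to\infty}\phi(x)>0$, and using that $\phi$ is eventually equal to $c$ and vanishes on a left half-line, for fixed $y$ one can pick $g\in C_c(\R)$ with $0\le g\le\phi(\cdot+y)$ agreeing with $\phi(\cdot+y)$ on $(-\infty,m+1]$. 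Since $\mathcal{P}$ charges nothing above $m$ and, for $n$ large, all atoms of $\mathcal{P}_n$ lie in $(-\infty,m+1]$, we get $\int\phi(x+y)\mathcal{P}_n(\mathrm{d}x)=\int g\,\mathrm{d}\mathcal{P}_n\to\int g\,\mathrm{d}\mathcal{P}=\int\phi(x+y)\mathcal{P}(\mathrm{d}x)$ almost surely, the middle step being vague convergence against $g\in C_c(\R)$; bounded convergence then yields $F_n(y)\to F_\infty(y)$. For (b), fixing $y_0$ and using $\phi(x+y)\le c\,\mathbf{1}_{\{x>-b-y\}}$ together with the local finiteness of $\mathcal{P}$ and $m<\infty$ to dominate, dominated convergence in $\mathcal{P}$ followed by bounded convergence in $\E$ gives $F_\infty(y)\to F_\infty(y_0)$.

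For (c), as $y\to-\infty$ the integral is bounded by $c\,\mathcal{P}_n((-b-y,\infty))\to0$, so $\lim_{y\to-\infty}F_n(y)=1$ for every $n\in\N\cup\{\infty\}$ and the left endpoints match trivially. As $y\to+\infty$, monotone convergence gives $\int\phi(x+y)\mathcal{P}_n(\mathrm{d}x)\uparrow c\,\mathcal{P}_n(\R)$, hence $\lim_{y\to+\infty}F_n(y)=\E(e^{-c\mathcal{P}_n(\R)})$; here the hypothesis $\mathcal{P}_n(\R)\to\mathcal{P}(\R)$ a.s. and bounded convergence give $\E(e^{-c\mathcal{P}_n(\R)})\to\E(e^{-c\mathcal{P}(\R)})$, matching the right endpoints. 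With (a)--(c) in hand, P\'olya's theorem applied to the non-decreasing functions $1-F_n$, whose pointwise limit $1-F_\infty$ is continuous with converging endpoints, yields $\sup_{y\in\R}|F_n(y)-F_\infty(y)|\to0$, which is (\ref{eql}).

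I expect the main obstacle to be step (a): because $\phi$ is not compactly supported, the functional $\mu\mapsto\int\phi(\cdot+y)\,\mathrm{d}\mu$ is not vaguely continuous, and it is precisely the joint convergence of the maxima that permits truncating $\phi$ to a $C_c$ function without altering the integrals. The convergence of the total masses plays the complementary role of pinning the right endpoint at $+\infty$, which P\'olya's theorem requires, while monotonicity in $y$ is what upgrades pointwise to uniform convergence.
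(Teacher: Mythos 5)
Your proof is correct and follows essentially the same route as the paper's: both exploit the monotonicity of $y\mapsto F_n(y)$ coming from $\phi$ non-decreasing, pointwise convergence $F_n(y)\to F_\infty(y)$, continuity of the limit, and matching of the endpoint limits $\lim_{y\to-\infty}F_n(y)=1$ and $\lim_{y\to+\infty}F_n(y)=\E(e^{-\phi(\infty)\mathcal{P}_n(\R)})$ (via $\mathcal{P}_n(\R)\to\mathcal{P}(\R)$ a.s.), before concluding by P\'olya's theorem. The only difference is that where the paper obtains the pointwise convergence by citing Lemma 4.4 in \cite{berestycki}, you re-derive that step from scratch via Skorokhod representation and truncation of $\phi$ to a compactly supported function below the limiting maximum, which is a self-contained but equivalent argument.
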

\begin{proof}
Let $F_n(\phi,y):=\E(e^{-\int_{\R}\phi(x+y)\mathcal{P}_n(\mathrm{d}x)})$ and $F(\phi,y):=\E(e^{-\int_{\R}\phi(x+y)\mathcal{P}(\mathrm{d}x)})$. According to Lemma \ref{localiness}, with Lemma 4.4 in \cite{berestycki}, we conclude that for any $y\in\R$,
\[\lim_{n\to\infty} F_n(\phi,y)=F(\phi,y).\]
 Because $\phi$ is non-decreasing,  we know that $F_n(\phi,y)$ and $F(\phi,y)$ are non-increasing functions with respect to $y$. Note that $\mathcal{P}((0,\infty))<\infty$ a.s., we have $F(\phi,y)$ is continuous with respect to $y$ by dominated convergence theorem. Note that $\phi(\infty):=\lim_{x\to\infty}\phi(x)>0$ , by monotone convergence theorem and dominated convergence theorem, for any $n\in\N\cup\{\infty\}$,
\[\lim_{y\to\infty}F_n(\phi,y)=\E(e^{-\phi(\infty)\mathcal{P}_n(\R)}),\quad \lim_{y\to-\infty}F_n(\phi,y)=1.\]
Because $\lim_{n\to\infty}\mathcal{P}_n(\R)=\mathcal{P}(\R)$ a.s., by dominated convergence theorem, we obtain that
\[\lim_{n\to\infty}\lim_{y\to\infty}F_n(\phi,y)=\E(e^{-\phi(\infty)\mathcal{P}(\R)}).\]
Combined with Polya's theorem, we conclude that
\[\lim_{n\to\infty}\sup_{y\in\R}|F_n(\phi,y)-F(\phi,y)|=0.\qedhere\]
\end{proof}

\section{Proof of Theorem \ref{th1}}\label{pf1}
 Fix $t\in(0,1)$, we consider the two-speed branching random walk $(V^{(n)}(u):|u|\leq n)_{n\geq 0}$ with reproduction law $\mathcal{L}_1$ and $\mathcal{L}_2$ in the fast regime. For simplicity, we write $t_n$ for $\lfloor tn\rfloor$. Recall that $M_n^{(n)}=\max_{|u|=n}V^{(n)}(u)$, $m_n=\frac{\kappa_1(\theta)}{\theta}t_n+\frac{\kappa_2(\theta)}{\theta}(n-t_n)-\frac{1}{2\theta}\log n$ and $\mathcal{E}_n=\sum_{|u|=n}\delta_{V^{(n)}(u)-m_n}$. In this section, we aim to prove the following result, from which Theorem \ref{th1} is an immediate consequence.
\begin{proposition}\label{prop2}
Under the assumptions of Theorem \ref{th1}, for any $\phi\in C_\uparrow(\R)$, we have
\[\lim_{n\to\infty}\E(e^{-\mathcal{E}_n(\phi)})=\E(e^{-\mathcal{E}(\phi)}),\]
where $\mathcal{E}$ is the randomly shifted decorated Poisson point process defined in (\ref{limp}) and (\ref{eqex}).
\end{proposition}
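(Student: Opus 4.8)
The plan is to compute the Laplace functional $\E(e^{-\mathcal{E}_n(\phi)})$ by conditioning on the branching random walk up to time $t_n$, using the Markov property at time $t_n$. As indicated in the introduction, writing the first-stage extremal process as $\mathcal{E}^{(1)}_{t_n}:=\sum_{|u|=t_n}\delta_{V_1(u)-\kappa_1'(\theta)t_n}$, the branching property yields
\begin{equation}\label{eqplan1}
\E(e^{-\mathcal{E}_n(\phi)})=\E\left(\exp\left(-\sum_{|u|=t_n}\phi_{n-t_n}[\phi]\left(V_1(u)-\kappa_1'(\theta)t_n\right)\right)\right),
\end{equation}
where $\phi_{n-t_n}[\phi](x)=-\log\E\bigl(e^{-\mathcal{E}^{(2)}_{n-t_n}(\phi(\cdot+x))}\bigr)$ encodes the contribution of the second stage started from a first-stage particle at relative position $x$. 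The first task is therefore to obtain precise asymptotics for the function $\phi_{n-t_n}[\phi]$, exploiting the results of \cite{luo} on the second environment: because $\mathcal{L}_2$ lives in the fast regime at parameter $\theta$, the second-stage maximum is localized and the associated extremal process converges to the decorated object appearing in (\ref{eqex}). I expect to show that $\phi_{n-t_n}[\phi](x)$ behaves, for $x$ in the relevant range, like $C_{\mathrm{f}}\,e^{\theta x}(1+o(1))$ up to the correct normalization, so that the exponent in (\ref{eqplan1}) is asymptotically $C_{\mathrm{f}}\sum_{|u|=t_n}e^{\theta(V_1(u)-\kappa_1'(\theta)t_n)}\psi_n(\cdots)$ for a slowly varying correction.

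The second task is to feed these asymptotics into the first-stage sum. Here the key tool is Theorem \ref{prop1}, the central limit theorem associated with the additive martingale $W_{t_n}(\theta)$ of the branching random walk with law $\mathcal{L}_1$. Since $\theta\kappa_1'(\theta)<\kappa_1(\theta)$ by the fast-regime assumption (\ref{as2}), the martingale $W_n(\theta)$ converges in $L^1$ to a strictly positive limit $W^{(1)}(\theta)$, and Theorem \ref{prop1} controls the Gaussian fluctuations of the first-stage particles around $\kappa_1'(\theta)t_n$ on the $\sqrt{n}$ scale. The constant $C_{\mathrm{f}}$ will emerge by combining the $e^{\theta x}$-weight from $\phi_{n-t_n}[\phi]$ with the Gaussian density $\mathcal{N}(0,\kappa_1''(\theta))$ from (\ref{L1cov}); the $-\frac{1}{2\theta}\log n$ term in $m_n$ is precisely what balances the $1/\sqrt{n}$ factor coming from the Gaussian window, so that the limit is nondegenerate.

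The final step is to identify the limiting Laplace functional with that of the process $\mathcal{E}$ defined in (\ref{limp}). Once I have shown that the exponent in (\ref{eqplan1}) converges to $C_{\mathrm{f}}W^{(1)}(\theta)\int_{\R}\bigl(1-e^{-\mathcal{D}^{(2)}(\phi(\cdot+y))}\bigr)\theta e^{-\theta y}\,\mathrm{d}y$ (in law, through the randomness of $W^{(1)}(\theta)$), this is exactly the Laplace functional of a decorated Poisson point process with random intensity $C_{\mathrm{f}}W^{(1)}(\theta)\theta e^{-\theta y}\,\mathrm{d}y$ and decoration $\mathcal{D}^{(2)}$, by the standard superposition formula for Poisson point processes. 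Taking $\phi$ to approximate the indicator-type functions recovers part (i) of the theorem as well.

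I expect the main obstacle to be the uniform control of $\phi_{n-t_n}[\phi]$: the second-stage quantity must be estimated uniformly over the whole range of starting positions $x=V_1(u)-\kappa_1'(\theta)t_n$ that carry nonnegligible mass, including the atypically high first-stage particles and the precise matching of the $\log n$ corrections between the two stages. The remark (\ref{mn}) shows that the shift $\frac{tn-t_n}{1-t}\bigl(\kappa_1'(\theta)-\frac{\kappa_1(\theta)}{\theta}\bigr)$ remains bounded, which should let me absorb the discrepancy between $m_n$ and the naive centering into the decoration; nonetheless, transferring the pointwise convergence of the second-stage extremal process (from \cite{luo}) into the uniform estimate needed to apply Theorem \ref{prop1}, presumably via a version of the uniform Laplace-functional convergence in Lemma \ref{lemslow2}, is where the delicate work lies.
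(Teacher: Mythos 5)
Your overall route coincides with the paper's: Markov property at time $t_n$, asymptotics of the second-stage functional $\phi_{n-t_n}[\phi]$ from \cite{luo}, Theorem \ref{prop1} to handle the first-stage sum, and identification of the limiting Laplace functional with that of the decorated Poisson point process in (\ref{limp}). However, there is a genuine gap exactly at the point you flag as the main obstacle, and the fix is not the one you propose. You ask for control of $\phi_{n-t_n}[\phi](x)$ uniformly over the whole range of starting positions, ``including the atypically high first-stage particles''. No such uniform estimate can hold: the asymptotic from Theorem 1.3 of \cite{luo} has the form $p_{n-t_n}(x)\approx C\,e^{\theta x-t_n\kappa_1(\theta)}f\bigl((x-\kappa_1'(\theta)t_n)/\sqrt{t_n}\bigr)$ with $f$ a Gaussian density, and this right-hand side vanishes rapidly once $x-\kappa_1'(\theta)t_n\gg\sqrt{t_n}$, whereas the true quantity $p_{n-t_n}(x)$ stays bounded away from zero for $x$ very large (all second-stage descendants then land above the support of $\phi$). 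So the equivalence is valid only in a window $|x-\kappa_1'(\theta)t_n|\leq B\sqrt{t_n}$, and the atypically high particles must be removed \emph{before} any such asymptotic is invoked. This is what the paper's Lemma \ref{lemfast} does: a first-moment computation, using the many-to-one formula twice together with Stone's local limit bound (Corollary 1 in \cite{stone}), shows that with probability at least $1-\vep$ no particle above $m_n+y$ at time $n$ has its time-$t_n$ ancestor outside that window. The Laplace functional is then computed for the truncated process $\mathcal{G}_n$ only, and the error is controlled by Lemma \ref{lemfast}. Without this localization step (or an equivalent a priori truncation), the passage from your first displayed identity to the claimed asymptotic exponent does not go through.

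Two further ingredients you would still have to supply. First, the window truncation introduces the discontinuous factor $1_{\{|x|\leq B\}}$ into the test function, while Theorem \ref{prop1} applies only to $f\in C_b(\R)$; the paper resolves this by sandwiching, $\underline{f}_B\leq f_B\leq f$ with $\underline{f}_B$ continuous, applying Theorem \ref{prop1} to both bounds and letting $B\to\infty$ at the very end. Second, what is needed from the second stage is not merely the convergence in law of the conditioned extremal process to $\mathcal{D}_2$, but a statement uniform in the conditioning level and incorporating the overshoot of the maximum: the decoration enters shifted by an independent exponential variable $\mathbf{e}$ of parameter $\theta$, and the paper devotes Proposition \ref{prop3} and Lemma \ref{lemfast1} (an adaptation of Lemma 4.13 in \cite{arguin}) to exactly this. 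Your suggestion to use ``a version of Lemma \ref{lemslow2}'' has the right instinct (uniformity is the issue) but points at the wrong tool: that lemma concerns a fixed sequence of point processes under deterministic shifts, not a family of conditional laws whose conditioning event varies with $n$ and with the starting position.
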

The following lemma implies that with a high probability, if one particle is above $m_n+y$ at time $n$, then its ancestor at time $t_n$ must be located in a $\sqrt{t_n}$-neighborhood of $\kappa_1'(\theta) t_n$. In the rest of paper, for $i\in\{1,2\}$, we always write $(V_i(u):u\in\T)$ for a branching random walk with reproduction law $\mathcal{L}_i$.
\begin{lemma}\label{lemfast}
Under the assumptions of Theorem \ref{th1}, for any $\vep>0$ and $y\in\R$, there exists $B>0$ such that
\[\limsup_{n\to\infty}\bP(\exists |u|=n: V^{(n)}(u)>m_n+y \text{ and }V^{(n)}(u_{t_n})\notin[\kappa_1'(\theta)t_n-B\sqrt{t_n},\kappa_1'(\theta)t_n+B\sqrt{t_n}])<\vep.\]
\end{lemma}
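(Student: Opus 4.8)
The plan is to prove this by a first–moment (Markov inequality) argument, controlling the expected number of particles that are high at time $n$ while having a badly placed ancestor at time $t_n$. Write $\mathcal{N}_n(B)$ for the number of $u$ with $|u|=n$, $V^{(n)}(u)>m_n+y$ and $V^{(n)}(u_{t_n})\notin[\kappa_1'(\theta)t_n-B\sqrt{t_n},\kappa_1'(\theta)t_n+B\sqrt{t_n}]$. By Markov's inequality it suffices to show that $\limsup_{n\to\infty}\E(\mathcal{N}_n(B))$ can be made smaller than $\vep$ by taking $B$ large. First I would condition on $\mathscr{F}_{t_n}$ and use the branching property: a particle at time $t_n$ located at $x$ generates an independent branching random walk with law $\mathcal{L}_2$, so the expected number of its descendants above $m_n+y$ is, by the many-to-one formula (Corollary \ref{many}) applied with $\kappa_2$,
\[
N(x)=e^{-(n-t_n)(\theta\kappa_2'(\theta)-\kappa_2(\theta))}\,\E\bigl(e^{-\theta\tilde S_{n-t_n}}\mathbf{1}_{\{\tilde S_{n-t_n}>z\}}\bigr),\qquad z:=m_n+y-x-\kappa_2'(\theta)(n-t_n),
\]
where $(\tilde S_k)$ is the centered random walk with increment mean $0$ and variance $\kappa_2''(\theta)$ attached to $\mathcal{L}_2$ at $\theta$. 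Writing $x=\kappa_1'(\theta)t_n+a$ and using Remark \ref{another}, one gets $z=-\frac{1}{2\theta}\log n-a+y+O(1)$, and $\theta\kappa_2'(\theta)-\kappa_2(\theta)>0$ by (\ref{as2}).

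The crux is a sharp estimate for the centered, finite-variance, non-lattice walk: for $m$ large,
\[
\E\bigl(e^{-\theta\tilde S_m}\mathbf{1}_{\{\tilde S_m>z\}}\bigr)\le \frac{C}{\sqrt m}\,e^{-\theta z}\,e^{-z^2/(2\kappa_2''(\theta)m)} ,
\]
valid uniformly over the relevant range of $z$. Substituting $m=n-t_n\sim(1-t)n$ and $e^{-\theta z}=n^{1/2}e^{\theta a}e^{-\theta y}e^{O(1)}$, the factor $n^{1/2}$ produced by the $\frac{1}{2\theta}\log n$ in $m_n$ is exactly cancelled by the $1/\sqrt m$, leaving
\[
N(\kappa_1'(\theta)t_n+a)\le C\,e^{-(n-t_n)(\theta\kappa_2'(\theta)-\kappa_2(\theta))}\,e^{-\theta y}\,e^{\theta a}\,e^{-a^2/(2\kappa_2''(\theta)(n-t_n))}.
\]

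Finally I would apply the many-to-one formula to the first phase. Since $\E(\mathcal{N}_n(B))=e^{t_n\kappa_1(\theta)}\E\bigl(e^{-\theta S^{(1)}_{t_n}}\mathbf{1}_{\{|a|>B\sqrt{t_n}\}}N(\kappa_1'(\theta)t_n+a)\bigr)$ with $a=S^{(1)}_{t_n}-\kappa_1'(\theta)t_n$, two cancellations occur: the factor $e^{-\theta S^{(1)}_{t_n}}=e^{-\theta\kappa_1'(\theta)t_n}e^{-\theta a}$ cancels the $e^{\theta a}$ above, while the deterministic exponent $t_n(\kappa_1(\theta)-\theta\kappa_1'(\theta))-(n-t_n)(\theta\kappa_2'(\theta)-\kappa_2(\theta))$ is $O(1)$ by the fast-regime identity (\ref{as2}) together with the boundedness of $tn-t_n$ (Remark \ref{another}). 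Bounding the residual Gaussian factor by $1$ gives
\[
\E(\mathcal{N}_n(B))\le C\,e^{-\theta y}\,\bP\bigl(|S^{(1)}_{t_n}-\kappa_1'(\theta)t_n|>B\sqrt{t_n}\bigr),
\]
and the central limit theorem, valid since $\kappa_1''(\theta)\in(0,\infty)$ by (\ref{as1}), shows the right-hand side converges to $Ce^{-\theta y}\bP(|\mathcal{N}(0,\kappa_1''(\theta))|>B)$, which tends to $0$ as $B\to\infty$.

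The main obstacle is the sharp second–phase bound: it is a Bahadur–Rao / local-limit type estimate for a centered random walk weighted by $e^{-\theta\tilde S_m}$ and restricted to $\{\tilde S_m>z\}$, and the presence of the $1/\sqrt m$ prefactor is essential, as it is precisely what compensates the $n^{1/2}$ coming from the coefficient $\frac12$ (rather than $\frac32$) in the fast-regime normalization $m_n$. This is where the non-lattice assumption (\ref{as4}) on $\mathcal{L}_2$ enters; the range of $a$ with $B\sqrt{t_n}<|a|=O(n)$ causes no difficulty because the surviving Gaussian factor $e^{-a^2/(2\kappa_2''(\theta)(n-t_n))}$ only improves the bound for large $|a|$.
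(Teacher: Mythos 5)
Your proposal is correct and follows essentially the same route as the paper's proof: a first-moment (Markov inequality) bound, the many-to-one formula applied to each of the two phases, the fast-regime identity (\ref{as2}) cancelling the deterministic exponentials up to the bounded error of Remark \ref{another}, a uniform bound $\sup_z e^{\theta z}\sqrt{m}\,\E(e^{-\theta\tilde S_m}\mathbf{1}_{\{\tilde S_m>z\}})<C$ for the second-phase walk, and the central limit theorem for the first-phase walk. The one caveat is that your ``crux'' estimate is precisely Corollary 1 in \cite{stone}, which the paper invokes, but \emph{without} the Gaussian factor $e^{-z^2/(2\kappa_2''(\theta)m)}$: that stronger version is not justified uniformly in $z$ under the standing assumptions (exponential moments of the tilted walk are only controlled at $\theta$), and it is also unnecessary, since you immediately bound that factor by $1$ and only ever use the weaker form.
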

\begin{proof}
By Markov inequality and Markov property, we have
\begin{equation}\label{eq4.0}
\begin{split}
&\bP(\exists |u|=n: V^{(n)}(u)>m_n+y \text{ and }V^{(n)}(u_{t_n})\notin[\kappa_1'(\theta)t_n-B\sqrt{t_n},\kappa_1'(\theta)t_n+B\sqrt{t_n}])\\
\leq &\E\left(\sum_{|u|=n}1_{\{V^{(n)}(u)>m_n+y,V^{(n)}(u_{t_n})\notin[\kappa_1'(\theta)t_n-B\sqrt{t_n},\kappa_1'(\theta)t_n+B\sqrt{t_n}]\}}\right)\\
=&\E\left(\sum_{|u|=t_n}\E_{V_1(u)}\left(\sum_{|v|=n-t_n}1_{\{V_2(v)>m_n+y\}}\right)1_{\{V_1(u)\notin[\kappa_1'(\theta)t_n-B\sqrt{t_n},\kappa_1'(\theta)t_n+B\sqrt{t_n}]\}}\right).\\
\end{split}
\end{equation}
 Similarly to the definition about random walk in Lemma \ref{many}, we denote by $(S^{(1)}_n)_{n\geq 0}$ a random walk associated with $(V_1(u):u\in\T)$ and $(S_n^{(2)})_{n\geq 0}$ a random walk associated with $(V_2(u):u\in\T)$. Using many-to-one formula twice, we know that the right hand side of (\ref{eq4.0}) is equal to
\begin{equation}\label{eq4.3}
\begin{split}
&\E(1_{\{S^{(1)}_{t_n}\notin[\kappa_1'(\theta)t_n-B\sqrt{t_n},\kappa_1'(\theta)t_n+B\sqrt{t_n}]\}}\E_{S^{(1)}_{t_n}}(e^{-\theta S^{(2)}_{n-t_n}+t_n\kappa_1(\theta)+(n-t_n)\kappa_2(\theta)}1_{\{S^{(2)}_{n-t_n}>m_n+y\}}))\\
= &\sqrt{n}e^{-\theta y}\E(1_{\{S^{(1)}_{t_n}\notin[\kappa_1'(\theta)t_n-B\sqrt{t_n},\kappa_1'(\theta)t_n+B\sqrt{t_n}]\}}\E_{S^{(1)}_{t_n}}(e^{-\theta( S^{(2)}_{n-t_n}-m_n-y)}1_{\{S^{(2)}_{n-t_n}>m_n+y\}}))\
\end{split}
\end{equation}
where the last equality follows from the definition of $m_n$.
By Corollary 1 in \cite{stone}, there exists a constant $C>0$ such that
\[\sup_{n\geq 1,x\in\R}\sqrt{n}\E(e^{-\theta(S^{(2)}_n-x)}1_{\{S^{(2)}_n>x\}})<C,\]
which implies that
\begin{equation}\label{eq4.2}
\begin{split}
&\sqrt{n}e^{-\theta y}\E(1_{\{S^{(1)}_{t_n}\notin[\kappa_1'(\theta)t_n-B\sqrt{t_n},\kappa_1'(\theta)t_n+B\sqrt{t_n}]\}}\E_{S^{(1)}_{t_n}}(e^{-\theta( S^{(2)}_{n-t_n}-m_n-y)}1_{\{S^{(2)}_{n-t_n}>m_n+y\}}))\\
\leq&C\bP(S^{(1)}_{t_n}\notin[\kappa_1'(\theta)t_n-B\sqrt{t_n},\kappa_1'(\theta)t_n+B\sqrt{t_n}]).
\end{split}
\end{equation}
Combining (\ref{eq4.0}) with (\ref{eq4.3}) and (\ref{eq4.2}), we conclude that
\[
\begin{split}
&\bP(\exists |u|=n: V^{(n)}(u)>m_n+y \text{ and }V^{(n)}(u_{t_n})\notin[\kappa_1'(\theta)t_n-B\sqrt{t_n},\kappa_1'(\theta)t_n+B\sqrt{t_n}])\\
\leq &C\bP(S^{(1)}_{t_n}\notin[\kappa_1'(\theta)t_n-B\sqrt{t_n},\kappa_1'(\theta)t_n+B\sqrt{t_n}]),
\end{split}
\]
which, according to central limit theorem, completes the proof by letting $n\to\infty$ and then $B\to\infty$.
\end{proof}

Now we give the proof of Proposition \ref{prop2}.

\begin{proof}[Proof of Proposition \ref{prop2}]
 For $B>0$ and $n\in\N$, we set $G(n,B):=\{|u|=t_n:|V^{(n)}(u)-\kappa_1'(\theta) t_n|\leq B\sqrt{t_n}\}$ and define a point process
\[\mathcal{G}_n:=\sum_{u\in G(n,B)}\sum_{|v|=n,v\succ u}\delta_{V^{(n)}(v)-m_n}.\]
Note that
\[
\begin{split}
0&\leq\E(e^{-\mathcal{G}_n(\phi)})-\E(e^{-\mathcal{E}_n(\phi)})\\
 &=\E[e^{-\mathcal{G}_n(\phi)}(1-e^{-\sum_{|u|=t_n, u\notin G(n,B)}\sum_{|v|=n,v\succ u}\phi(V^{(n)}(v)-m_n)})].
\end{split}
\]
Let $m_\phi$ be the infimum of the support of $\phi$, i.e. $m_\phi:=\inf\{x\in\R:\phi(x)\neq 0\}$. Thus, we know that $\phi(x)=0$ for any $x<m_\phi$. Therefore,
\begin{equation}\label{eq4.17}
\begin{split}
&|\E(e^{-\mathcal{E}_n(\phi)})-\E(e^{-\mathcal{G}_n(\phi)})|\\
\leq&\bP(\exists |u|=n: V^{(n)}(u)\geq m_n+m_{\phi} \text{ and }V^{(n)}(u_{t_n})\notin[\kappa_1'(\theta)t_n-B\sqrt{t_n},\kappa_1'(\theta)t_n+B\sqrt{t_n}]),
\end{split}
\end{equation}
the right hand side of which decays to 0 as $n\to\infty$ first and then $B\to\infty$ by Lemma \ref{lemfast}.
So we only consider how to compute the expectation $\E(e^{-\mathcal{G}_n(\phi)}).$

 By Markov property at time $t_n$, we have
\[
\begin{split}
\E(e^{-\mathcal{G}_n(\phi)})&=\E\left(\prod_{u\in\mathcal{G}(n,B)}\E_{V^{(n)}(u)}(e^{-\sum_{|v|=n-t_n}\phi(V_2(v)-m_n)})\right)\\
&=\E\left(\prod_{|u|=t_n}\E_{V_1(u)}(e^{-\sum_{|v|=n-t_n}\phi(V^{(2)}(v)-m_n)})1_{\{|V_1(u)-\kappa'_1(\theta)t_n|\leq B\sqrt{t_n}\}}\right).
\end{split}
\]
Define $M_{2,n}:=\max_{|u|=n}V_2(u)$ and $U(n,x):=\{M_{2,n}\geq m_n-x+m_\phi\}$. For fixed $x\in\R$,
\[
\begin{split}
&\E_x(e^{-\sum_{|v|=n-t_n}\phi(V_2(v)-m_n)})\\
=&1-\E(1-e^{-\sum_{|v|=n-t_n}\phi(V_2(v)-m_n+x)})\\
=&1-\E((1-e^{-\sum_{|v|=n-t_n}\phi(V_2(v)-m_n+x)})1_{U(n,x)})\\
=&1-\bP(U(n,x))(1-\E(e^{-\sum_{|v|=n-t_n}\phi(V_2(v)-m_n+x)}|U(n,x))),
\end{split}
\]
where the second equality follows from that on the event $U(n,x)$,
\[1-e^{-\sum_{|v|=n-t_n}\phi(V_2(v)-m_n+x)}=0.\]
For convenience, we write $p_{n-t_n}(x)$ for
\[\bP(U(n,x))(1-\E(e^{-\sum_{|v|=n-t_n}\phi(V_2(v)-m_n+x)}|U(n,x))).\]
Then we have
\begin{equation}\label{eq4.14}
\begin{split}
\E(e^{-\mathcal{G}_n(\phi)})=\E\left(e^{-\sum_{|u|=t_n}\frac{\log (1-p_{n-t_n}(V_1(u)))}{-p_{n-t_n}(V_1(u))}p_{n-t_n}(V_1(u))1_{\{|V_1(u)-\kappa_1'(\theta)t_n|\leq B\sqrt{t_n}\}}}\right).
\end{split}
\end{equation}
 According to Remark \ref{another}, we know that as $n\to\infty$,
\[m_n-\kappa_1'(\theta)t_n-\kappa_2'(\theta)(n-t_n)+\frac{1}{2\theta}\log n=O(1).\]
By assumption (\ref{as2}), we know that $\theta\kappa_2'(\theta)>\kappa_2(\theta)$. According to Theorem 1.3 in \cite{luo}, there exists $C_2\in(0,1)$ such that
\begin{align}
\bP(U(n,x))\nonumber=&(1+o(1))\frac{C_2}{\sqrt{n}} f\left(\frac{x-t_n\kappa_1'(\theta)}{\sqrt{t_n}}\right)e^{-\theta(m_\phi+m_n-\kappa_2'(\theta)(n-t_n)+\gamma_n-x)}e^{-(n-t_n)(\theta\kappa_2'(\theta)-\kappa_2(\theta))}\nonumber\\
=&(1+o(1))C_2f\left(\frac{x-t_n\kappa_1'(\theta)}{\sqrt{t_n}}\right)e^{\theta x-t_n\kappa_1(\theta)-\theta m_\phi}\nonumber\\
=&:(1+o(1))g_n(x)\label{lt}
\end{align}
uniformly in $|x-\kappa'_1(\theta)t_n|\leq B\sqrt{t_n}$, where $f(x):=\frac{1}{\theta\sqrt{2\pi\kappa''_2(\theta)(1-t)}}e^{-\frac{t}{1-t}\frac{x^2}{2\kappa''_2(\theta)}},$ $x\in\R$.
Combined with Theorem 1.6 in \cite{luo},we denote by $\mathcal{D}_2$  the limit in law of $\sum_{|u|=n}\delta_{V_2(u)-M_{2,n}}$ conditionally on $\{M_{2,n}\geq \kappa_2'(\theta)n\}$ and $\mathbf{e}$  an exponential variable with index $\theta$, independent of $\mathcal{D}_2$. Because $\mathcal{D}_2(\{0\})\geq 1$ a.s., we have $\E(e^{-\sum_{\ell\in \mathcal{D}_2}\phi(\ell+\mathbf{e}+m_\phi)})<1$. By (\ref{lt}) and forthcoming Lemma \ref{lemfast1}, we conclude that as $n\to\infty$,
\begin{equation}\label{eq4.15}
\sup_{|x-\kappa_1'(\theta)t_n|\leq B\sqrt{t_n}}\left|g_n(x)(1-\E(e^{-\sum_{\ell\in \mathcal{D}_2}\phi(\ell+\mathbf{e}+m_\phi)}))\frac{1}{p_{n-t_n}(x)}-1\right|\to 0.
\end{equation}
Therefore, we conclude that
\begin{equation}\label{eq4.16}
\begin{split}
&g_n(x)(1-\E(e^{-\sum_{\ell\in \mathcal{D}_2}\phi(\ell+\mathbf{e}+m_\phi)}))\\
=&C_2e^{\theta x-t_n\kappa_1(\theta)}f\left(\frac{x-\kappa'_1(\theta) t_n}{\sqrt{t_n}}\right)\int_{0}^\infty \theta e^{-\theta (y+m_\phi)}(1-\E(e^{-\sum_{\ell\in \mathcal{D}_2}\phi(\ell+y+m_\phi)}))dy \\
=&C_2e^{\theta x-t_n\kappa_1(\theta)}f\left(\frac{x-\kappa'_1(\theta) t_n}{\sqrt{t_n}}\right)\int_\R \theta e^{-\theta y}(1-\E(e^{-\sum_{\ell\in \mathcal{D}_2}\phi(\ell+y)}))dy
\end{split}
\end{equation}
where the last equality follows form that $\mathcal{D}_2((0,\infty))=0$ almost surely and $\phi(y)=0$ for any $y<m_\phi$.

Next we are going to give a lower bound and upper bound of
\[\sum_{|u|=t_n}e^{\theta V_1(u)-t_n\kappa_1(\theta)}f_B\left(\frac{V_1(u)-\kappa_1'(\theta)t_n}{\sqrt{t_n}}\right),\]
where for $x\in\R$, $f_B(x):=f(x)1_{\{|x|\leq B\}}$. For $B>0$, we define $\underline{f}_B(x):=f(x)\min\{(B-|x|)_+,1\}$, $x\in\R$.
By Theorem \ref{prop1}, we know that
\[\sum_{|u|=t_n}e^{\theta V_1(u)-t_n\kappa_1(\theta)}f\left(\frac{V_1(u)-\kappa_1'(\theta)t_n}{\sqrt{t_n}}\right)\to W^{(1)}(\theta)\E(f(N))\]
and
\[\sum_{|u|=t_n}e^{\theta V_1(u)-t_n\kappa_1(\theta)}\underline{f}_{B}\left(\frac{V_1(u)-\kappa_1'(\theta)t_n}{\sqrt{t_n}}\right)\to W^{(1)}(\theta)\E(\underline{f}_{B}(N))\]
in probability, where $W^{(1)}(\theta)$ is the limit of the additive martingale of $(V_1(u):u\in\T)$ and $N\sim\mathcal{N}(0,\kappa_1''(\theta))$. Note that for $B>0$ and $x\in\R$, we have $\underline {f}_B(x)\leq f_B(x)\leq f(x)$.
Combined with (\ref{eq4.14}), (\ref{eq4.15}) and (\ref{eq4.16}),  by dominated convergence theorem, we conclude that
\begin{align}
\E\left[e^{-C_2W^{(1)}(\theta)\E(f(N))\int_\R \theta e^{-\theta y}(1-\E(e^{-\sum_{\ell\in \mathcal{D}_2}\phi(\ell+y)}))dy}\right]\leq\liminf_{n\to\infty}\E(e^{-\mathcal{G}_n(\phi)}),\nonumber\\
 \limsup_{n\to\infty}\E(e^{-\mathcal{G}_n(\phi)})\leq \E\left[e^{-C_2W^{(1)}(\theta)\E(\underline{f}_{B}(N))\int_\R \theta e^{-\theta y}(1-\E(e^{-\sum_{\ell\in \mathcal{D}_2}\phi(\ell+y)}))dy}\right].\nonumber
\end{align}
Note that for any $x\in\R$, $\lim_{B\to\infty} \underline{f}_B(x)=f(x)$. Letting $B\to\infty$, by dominated convergence theorem, we have
\begin{equation}\label{lime}
\begin{split}
\lim_{n\to\infty}\E(e^{-\mathcal{E}_n(\phi)})=\E\left(e^{-C_{\mathrm{f}}W^{(1)}(\theta)\int_{\R}(1-\E(e^{-\sum_{\ell\in \mathcal{D}_2}\phi(\ell+y)}))\theta e^{-\theta y}\mathrm{d}y}\right),
\end{split}
\end{equation}
where
\begin{equation}\label{fast}
C_{\mathrm{f}}:=\frac{ C_2}{\theta\sqrt{2\pi}\sqrt{t\kappa_1''(\theta)+(1-t)\kappa_2''(\theta)}}.
\end{equation}

Recall that Laplace functional of a Poisson point process $\mathcal{P}$ with intensity measure $\mu$,
 \begin{equation}\label{lapf}
\E(e^{-\mathcal{P}(\phi)})=e^{-\int_{\R}(1-e^{-\phi(x)})\mu(\mathrm{d}x)}.
\end{equation}
We thus recognise the Laplace functional of the point process defined in (\ref{limp}) and (\ref{eqex}).
\end{proof}

Now we prove Theorem \ref{th1}.
\begin{proof}[Proof of Theorem \ref{th1}]
By the definition of $\mathcal{E}$ and $\max\mathcal{D}_2=0$ almost surely, for any $y\in\R$, $\bP(\mathcal{E}((y,\infty))<\infty)=1$. According to Proposition \ref{prop2}, combined with Lemma 4.4 in \cite{berestycki}, we conclude that $(\mathcal{E}_n,M_n^{(n)}-m_n)$ converges in law to $(\mathcal{E},\max \mathcal{E})$. Because $\max \mathcal{D}_2=0$ almost surely, it is not hard to see that $\max \mathcal{E}$ is distributed as a Gumbel variable with a random shift. More precisely, for any $y\in\R$, we have
\[
\bP(\max \mathcal{E}\leq y)=\E\left(e^{-C_{\mathrm{f}}W^{(1)}(\theta)e^{-\theta y}}\right). \qedhere
\]
\end{proof}

Let us emphasize the following slight generalization of Proposition 3.1 in \cite{luo}.
\begin{proposition}\label{prop3}
Consider a branching random walk $(V_2(u):u\in\T)$ with reproduction law $\mathcal{L}_2$. Fix $\theta>0$ such that $\mathcal{L}_2$ satisfies assumptions (\ref{as1}), (\ref{as3}), (\ref{as4}) and $\theta\kappa_2'(\theta)>\kappa_2(\theta)$. Then for any continuous function $\phi:\R\to [0,\infty)$ whose support has a finite left bound, $B>0$ and $x\geq 0$, we have
\[|\E(e^{-\mathcal{E}_n^{(2)}(\phi)}1_{\{M_{2,n}\geq \kappa_2'(\theta)n+y+x\}}|M_{2,n}\geq \kappa_2'(\theta)n+y)-e^{-\theta x}\E(e^{-\mathcal{D}_2(\phi)})|\to 0,\]
uniformly in $|y|\leq B\sqrt{n}$, where $\mathcal{E}^{(2)}_n:=\sum_{|u|=n}\delta_{V_2(u)-M_{2,n}}$.
\end{proposition}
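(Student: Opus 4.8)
The plan is to reduce the statement to two separate uniform asymptotics: a factorization of the conditioned Laplace functional that isolates the decoration $\mathcal{D}_2$, and a tail ratio that produces the exponential overshoot $e^{-\theta x}$. Throughout set $\nu:=\theta\kappa_2'(\theta)-\kappa_2(\theta)>0$ (positive by the hypothesis $\theta\kappa_2'(\theta)>\kappa_2(\theta)$, so that the conditioning event is a large-deviation event with decay $e^{-n\nu}$), and for $z\in\R$ write $P_n(z):=\bP(M_{2,n}\ge\kappa_2'(\theta)n+z)$ and $N_n(z):=\E(e^{-\mathcal{E}_n^{(2)}(\phi)}1_{\{M_{2,n}\ge\kappa_2'(\theta)n+z\}})$. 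The quantity to be controlled is $N_n(y+x)/P_n(y)$, and I would prove it converges to $e^{-\theta x}\E(e^{-\mathcal{D}_2(\phi)})$ uniformly over $|y|\le B\sqrt n$ (so that $|y+x|\le K\sqrt n$ eventually, for a suitable fixed $K$).

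First I would record the precise tail of the maximum. By Theorem 1.3 in \cite{luo} there is a positive continuous (Gaussian-type) function $\tilde f$ such that, uniformly in $|z|\le K\sqrt n$,
\[
P_n(z)=(1+o(1))\frac{C_2}{\sqrt n}\,\tilde f\!\left(\frac{z}{\sqrt n}\right)e^{-\theta z}e^{-n\nu},
\]
where $\tilde f$ is the moderate-deviation correction coming from the endpoint fluctuation of the spine walk (variance $\kappa_2''(\theta)$ per step) and is bounded away from $0$ and $\infty$ on any compact interval; this is exactly the input used already in \eqref{lt}. Consequently, for fixed $x\ge0$ and $|y|\le B\sqrt n$,
\[
\frac{P_n(y+x)}{P_n(y)}=(1+o(1))\,e^{-\theta x}\,\frac{\tilde f((y+x)/\sqrt n)}{\tilde f(y/\sqrt n)}.
\]
Since $y/\sqrt n\in[-B,B]$ and $(y+x)/\sqrt n$ differs from it by $x/\sqrt n\to0$, the uniform continuity and positivity of $\tilde f$ on $[-B-1,B+1]$ give $\tilde f((y+x)/\sqrt n)/\tilde f(y/\sqrt n)\to1$ uniformly, whence $P_n(y+x)/P_n(y)\to e^{-\theta x}$ uniformly in $|y|\le B\sqrt n$.

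The substantive step is the uniform factorization
\[
N_n(z)=(1+o(1))\,\E(e^{-\mathcal{D}_2(\phi)})\,P_n(z),\qquad\text{uniformly in }|z|\le K\sqrt n,
\]
which is precisely the generalization of Proposition 3.1 in \cite{luo} from a bounded window of levels to the full moderate-deviation window $|z|\le K\sqrt n$. I would obtain it by re-running the argument of \cite{luo}, keeping track of the dependence on $z$: one conditions, via the spine/many-to-one decomposition, on the near-maximal particle, writes $N_n(z)$ as a spine expectation of the cluster functional on the event that the spine reaches $\kappa_2'(\theta)n+z$, and identifies the limiting cluster seen from the spine tip with $\mathcal{D}_2$. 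The level $z$ enters only through the requirement that the spine endpoint land near $\kappa_2'(\theta)n+z$, whose probability is governed by the very local limit estimate (the renewal bound of \cite{stone}, used in Lemma \ref{lemfast}) that produces the prefactor $\frac{C_2}{\sqrt n}\tilde f(z/\sqrt n)e^{-\theta z}e^{-n\nu}$ above. Because $\E(e^{-\mathcal{D}_2(\phi)})$ does not depend on $z$ and this estimate is uniform over $|z|\le K\sqrt n$, the prefactor separates and cancels against $P_n(z)$, giving the displayed equivalence.

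Combining the three displays (with $z=y+x$ in the factorization) gives, uniformly in $|y|\le B\sqrt n$,
\[
\frac{N_n(y+x)}{P_n(y)}=(1+o(1))\,\E(e^{-\mathcal{D}_2(\phi)})\,\frac{P_n(y+x)}{P_n(y)}\longrightarrow e^{-\theta x}\,\E(e^{-\mathcal{D}_2(\phi)}),
\]
which is the assertion. The delicate point, which I expect to be the main obstacle, is the uniformity in the factorization step: in \cite{luo} the conditioning sits at the critical level $\kappa_2'(\theta)n$, whereas here the reference level is shifted by up to $B\sqrt n$, so one must check that every error term in that proof — the second-moment bound excluding two far-apart high particles, the barrier truncation of the spine trajectory, and the renewal/overshoot estimate separating the decoration from the top gap — is $o(1)$ relative to $P_n(z)$ uniformly over the whole window $|z|\le K\sqrt n$, rather than merely for bounded $z$. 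Concretely this means replacing the pointwise local-limit and tail estimates of \cite{luo} by their uniform-in-level counterparts, for which Stone's theorem and the (uniform) central limit theorem for the spine walk are exactly suited.
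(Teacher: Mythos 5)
Your proposal is correct and follows essentially the same route as the paper: the paper's proof consists precisely of the remark that the statement is a direct adaptation of Proposition 3.1 in \cite{luo}, and your decomposition into the uniform tail ratio $P_n(y+x)/P_n(y)\to e^{-\theta x}$ (via Theorem 1.3 of \cite{luo}, the same input as (\ref{lt})) times the uniform cluster factorization $N_n(z)=(1+o(1))\E(e^{-\mathcal{D}_2(\phi)})P_n(z)$ is exactly that adaptation, carried out over the moderate-deviation window $|z|\le K\sqrt{n}$. Your identification of the uniformity of the error terms in \cite{luo}'s argument as the only point requiring verification matches what the paper implicitly asserts.
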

\begin{proof}
The proof of this proposition is a direct adaption of the proof of Proposition 3.1 in \cite{luo}.
\end{proof}

 The strategy of the proof of the following lemma follows from Lemma 4.13 in \cite{arguin}. Recall that $m_\phi=\inf \mathrm{supp}(\phi)$.
\begin{lemma}\label{lemfast1}
Under the assumptions of Proposition \ref{prop3}. For any $\phi\in C_\uparrow(\R)$ and $B>0$, we have
\begin{equation}\label{eq3.4}
\lim_{n\to\infty}\sup_{|y|\leq B\sqrt{n}}|\E^{(y)}_n(e^{-\int_{\R}\phi(x+M^{(y)}_{2,n}+m_\phi)\mathcal{E}^{(2)}_n(\mathrm{d}x)})-\E(e^{-\int_\R \phi(x+\mathbf{e}+m_\phi)\mathcal{D}_2(\mathrm{d}x)})|=0,
\end{equation}
where $M^{(y)}_{2,n}:=M_{2,n}-n\kappa_2'(\theta)+y-m_\phi$, $\bP^{(y)}_n(\cdot):=\bP(\cdot|M_{2,n}^{(y)}\geq 0)$, $\E^{(y)}_n$ is the expectation of $\bP^{(y)}$ and $\mathbf{e}$ is an exponential variable with index $\theta$, independent of $\mathcal{D}_2$.
\end{lemma}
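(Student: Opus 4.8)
The plan is to read the statement as a joint-convergence result in which the overshoot of the maximum above its typical level plays the role of the exponential shift $\mathbf{e}$, and then to reduce everything to Proposition \ref{prop3} by discretizing this overshoot. Write $R:=M^{(y)}_{2,n}=M_{2,n}-n\kappa_2'(\theta)+y-m_\phi$, so that the conditioning event is $\{R\geq 0\}$ and, since $\mathcal{E}^{(2)}_n$ is centered at $M_{2,n}$, the exponent is exactly $\int_\R\phi(x+R+m_\phi)\mathcal{E}^{(2)}_n(\mathrm{d}x)=\sum_{\ell\in\mathcal{E}^{(2)}_n}\phi(\ell+R+m_\phi)$. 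The genuine difficulty is that $R$ appears \emph{both} as the conditioning variable and inside the argument of $\phi$, so no marginal limit applies directly; breaking this coupling is precisely what the discretization achieves. Setting $y_{\mathrm{Prop}}:=-y+m_\phi$, the event $\{R\geq 0\}$ and the tail events $\{R\geq x\}$ match verbatim the events $\{M_{2,n}\geq\kappa_2'(\theta)n+y_{\mathrm{Prop}}\}$ and $\{M_{2,n}\geq\kappa_2'(\theta)n+y_{\mathrm{Prop}}+x\}$ of Proposition \ref{prop3}, and $|y|\leq B\sqrt{n}$ yields $|y_{\mathrm{Prop}}|\leq B'\sqrt{n}$ for large $n$, so Proposition \ref{prop3} applies with its uniformity intact.

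First I would fix a mesh $\delta>0$ and a truncation level $K=J\delta$, and partition $[0,\infty)$ into the cells $[j\delta,(j+1)\delta)$, $0\leq j<J$. Since $\phi\in C_\uparrow(\R)$ is non-decreasing, on $\{R\in[j\delta,(j+1)\delta)\}$ one has the sandwich
\[
e^{-\mathcal{E}^{(2)}_n(\phi_{(j+1)\delta})}\leq e^{-\sum_{\ell\in\mathcal{E}^{(2)}_n}\phi(\ell+R+m_\phi)}\leq e^{-\mathcal{E}^{(2)}_n(\phi_{j\delta})},
\]
where $\phi_c:=\phi(\cdot+c+m_\phi)$ is again continuous, non-negative and with support bounded from the left, hence admissible in Proposition \ref{prop3}. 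Summing over $j$ and writing $1_{\{R\in[j\delta,(j+1)\delta)\}}=1_{\{R\geq j\delta\}}-1_{\{R\geq(j+1)\delta\}}$, each summand is of the form
\[
\E\big(e^{-\mathcal{E}^{(2)}_n(\phi_c)}1_{\{R\geq x\}}\,\big|\,R\geq 0\big),
\]
which by Proposition \ref{prop3} (test function $\phi_c$, level $x$) converges to $e^{-\theta x}\E(e^{-\mathcal{D}_2(\phi_c)})$, uniformly in $|y|\leq B\sqrt{n}$. The contribution of $j\geq J$ is bounded by the uniform tail estimate $\bP(R\geq K\mid R\geq 0)\to e^{-\theta K}$, which follows from the maximum-tail asymptotics of \cite{luo} (equivalently, Proposition \ref{prop3} with $\phi\equiv 0$); choosing $K$ large makes it uniformly small.

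Putting these together, for each fixed $\delta$ and $K$ the upper and lower sandwiches converge, uniformly in $|y|\leq B\sqrt{n}$ as $n\to\infty$, to the Riemann-type sums
\[
\sum_{j=0}^{J-1}\big(e^{-\theta j\delta}-e^{-\theta(j+1)\delta}\big)\,\E\big(e^{-\mathcal{D}_2(\phi_{c_j})}\big),
\]
with $c_j=j\delta$ for the upper bound and $c_j=(j+1)\delta$ for the lower one. Since $e^{-\theta j\delta}-e^{-\theta(j+1)\delta}=\int_{j\delta}^{(j+1)\delta}\theta e^{-\theta s}\,\mathrm{d}s$ and $s\mapsto\E(e^{-\mathcal{D}_2(\phi_s)})$ is bounded and non-increasing (the integral $\int\phi\,\mathrm{d}\mathcal{D}_2$ being a.s.\ finite because $\mathcal{D}_2$ is supported on $(-\infty,0]$ with finitely many atoms above any level), these are exactly the upper and lower Riemann sums of
\[
\int_0^\infty\theta e^{-\theta s}\,\E\big(e^{-\mathcal{D}_2(\phi_s)}\big)\,\mathrm{d}s=\E\Big(e^{-\int_\R\phi(x+\mathbf{e}+m_\phi)\mathcal{D}_2(\mathrm{d}x)}\Big),
\]
the last equality using the independence of $\mathbf{e}$ (exponential of index $\theta$) and $\mathcal{D}_2$. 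The gap between the two sandwiches is at most $\max_j(e^{-\theta j\delta}-e^{-\theta(j+1)\delta})$ times the total variation of the monotone integrand, hence $O(\delta)$, while the truncation error is $O(e^{-\theta K})$. Letting $n\to\infty$ first (uniformly in $y$), then $\delta\to 0$ and $K\to\infty$, both bounds collapse onto the target, giving the claimed uniform limit. The main obstacle is thus the order of limits: the grid must be coarse enough to reduce to finitely many applications of Proposition \ref{prop3}, which alone carry the uniformity in $y$, yet fine enough for the monotone sandwich to be tight, and the uniform tail bound on $R$ is what lets these two passes be reconciled.
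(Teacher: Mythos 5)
Your proof is correct, and its skeleton coincides with the paper's: the paper likewise restricts the overshoot $M^{(y)}_{2,n}$ to a bounded window, partitions that window into small cells, uses the monotonicity of $\phi$ to sandwich the Laplace functional between versions with the shift frozen at the cell endpoints, and then invokes Proposition \ref{prop3} finitely many times (once per endpoint and tail event, with the same reindexing of the conditioning level that you call $y_{\mathrm{Prop}}$), which is exactly where the uniformity in $|y|\leq B\sqrt{n}$ comes from in both arguments. The one genuine difference is how the discretization error is killed after $n\to\infty$. The paper uses the uniform continuity of $\phi\in C_\uparrow(\R)$ together with a truncation of the number of atoms of $\mathcal{D}_2$ in $[-b,\infty)$, producing an error of the form $\bP(\mathcal{D}_2([-b,\infty))\geq k)+k\vep$ and requiring four limiting parameters ($\vep\to 0$, $k\to\infty$, $a\to 0$, $b\to\infty$). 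You instead observe that $s\mapsto\E(e^{-\mathcal{D}_2(\phi_s)})$ is monotone and bounded, so the gap between the upper and lower limiting sums telescopes to at most $(1-e^{-\theta\delta})\leq\theta\delta$; this recognizes the limits as upper and lower Riemann sums of $\int_0^\infty\theta e^{-\theta s}\E(e^{-\mathcal{D}_2(\phi_s)})\,\mathrm{d}s$ and dispenses with both the uniform-continuity step and the atom-count truncation, leaving only the two parameters $\delta$ and $K$. Your route is thus a mild but real simplification of the error analysis; the paper's route is more robust in that it would also work if one only knew continuity (not monotonicity) of the limiting functional in the shift. One cosmetic caveat: when you invoke ``Proposition \ref{prop3} with $\phi\equiv 0$'' for the tail bound $\bP^{(y)}_n(R\geq K)\to e^{-\theta K}$, the zero function sits outside the literal hypotheses of that proposition (its support is empty); it is cleaner to cite, as you also do, the uniform tail asymptotics of \cite{luo} directly, which is what the paper itself relies on.
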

\begin{proof}
As $\phi\in C_\uparrow(\R)$, for $y,z\in\R$ and $I\subset\R$, we define
\begin{align}
&F_n(y,z,I):=\E^{(y)}_n(e^{-\int_{\R}\phi(x+z+m_\phi)\mathcal{E}^{(2)}_n(\mathrm{d}x)}1_{\{M^{(y)}_{2,n}\in I\}}),\nonumber\\
 &F(y,z,I):=\E(e^{-\int_\R \phi(x+z+m_\phi)\mathcal{D}_2(\mathrm{d}x)}1_{\{\mathbf{e}\in I\}}).\nonumber
\end{align}
Fix $a,b\in [0,\infty)$ such that $a<b$, we have
\begin{equation}\label{ineq2}
\begin{split}
&|\E^{(y)}_n(e^{-\int_{\R}\phi(x+M^{(y)}_{2,n}+m_\phi)\mathcal{E}^{(2)}_n(\mathrm{d}x)})-\E(e^{-\int_\R \phi(x+\mathbf{e}+m_\phi)\mathcal{D}_2(\mathrm{d}x)})|\\
\leq &\bP^{(y)}_n(M^{(y)}_{2,n}\notin (a,b])+\bP(\mathbf{e}\notin (a,b])\\
&+|\E^{(y)}_n(e^{-\int_{\R}\phi(x+M^{(y)}_{2,n}+m_\phi)\mathcal{E}^{(2)}_n(\mathrm{d}x)}1_{\{M^{(y)}_{2,n}\in(a,b]\}})-\E(e^{-\int_\R \phi(x+\mathbf{e}+m_\phi)\mathcal{D}_2(\mathrm{d}x)}1_{\{\mathbf{e}\in (a,b]\}})|.
\end{split}
\end{equation}

 Note that $\phi\in C_\uparrow(\R)$, then $\phi$ is uniformly continuous on $\R$. Therefore, fix $\vep>0$, there exists $\delta>0$ such that for any $|x_1-x_2|<\delta$, we have $|\phi(x_1)-\phi(x_2)|<\vep$. There exists a partition $\{a_1,\ldots,a_N\}$ of $(a,b]$ such that
\[a=a_0<a_1<\ldots<a_N=b\quad \text{and}\quad \max_{0\leq i\leq N-1}|a_{i+1}-a_i|<\vep.\]
Note that $1_{\{x\in(a,b]\}}=\sum_{i=0}^{N-1}1_{\{x\in I_i\}},$ where $I_i:=(a_i,a_{i+1}]$, according to $\phi$ is non-decreasing, we have the last term of right hand side of inequality (\ref{ineq2}) is smaller than
\begin{equation}\label{term}
\begin{split}
&\sum_{i=0}^{N-1}|F_n(y,a_i,I_i)-F_n(y,a_{i+1},I_i)|\\
+&\sum_{i=0}^{N-1}\{|F(y,a_i,I_i)-F(y,a_{i+1},I_i)|+|F_n(y,a_i,I_i)-F(y,a_i,I_i)|\}.
\end{split}
\end{equation}
 By triangular inequality, the first summation in (\ref{term}) is smaller than
\[
\begin{split}
&\sum_{i=0}^{N-1}\{|F_n(y,a_i,I_i)-F(y,a_i,I_i)|+|F_n(y,a_{i+1},I_i)-F(y,a_{i+1},I_i)|\}\\
+&\sum_{i=0}^{N-1}|F(y,a_i,I_i)-F(y,a_{i+1},I_i)|.
\end{split}
\]
Combined with (\ref{ineq2}), according to Proposition \ref{prop3}, we conclude that
\begin{align}
&\limsup_{n\to\infty}\sup_{|y|\leq B\sqrt{n}}|\E^{(y)}_n(e^{-\int_{\R}\phi(x+M^{(y)}_{2,n}+m_\phi)\mathcal{E}^{(2)}_n(\mathrm{d}x)})-\E(e^{-\int_\R \phi(x+\mathbf{e}+m_\phi)\mathcal{D}_2(\mathrm{d}x)})|\label{ineq3}\\
\leq &2\bP(\mathbf{e}\notin(a,b])+\limsup_{n\to\infty}\sup_{|y|\leq B\sqrt{n}}2\sum_{i=0}^{N-1}|F(y,a_i,I_i)-F(y,a_{i+1},I_i)|\nonumber.
\end{align}
For $k\in\N$, we have
\[
\begin{split}
&\sum_{i=0}^{N-1}|F(y,a_i,I_i)-F(y,a_{i+1},I_i)|\\
\leq &\bP(\mathcal{D}_2([-b,\infty))\geq k)\\
&\quad+\sum_{i=0}^{N-1}\E\left(\int_{\R}|\phi(x+a_i+m_\phi)-\phi(x+a_{i+1}+m_\phi)|\mathcal{D}_2(\mathrm{d}x)1_{\{\mathbf{e}\in I_i,\mathcal{D}_2([-b,\infty))<k\}}\right)\\
\leq & \bP(\mathcal{D}_2([-b,\infty))\geq k)+k\vep,
\end{split}
\]
where the second inequality follows from a basic inequality that is $|e^{-x_1}-e^{-x_2}|\leq |x_1-x_2|$ for $x_1,x_2>0$.
 Combined with (\ref{ineq3}), we deduce that
\[
\begin{split}
&\limsup_{n\to\infty}\sup_{|y|\leq B\sqrt{n}}|\E^{(y)}_n(e^{-\int_{\R}\phi(x+M^{(y)}_{2,n}+m_\phi)\mathcal{E}^{(2)}_n(\mathrm{d}x)})-\E(e^{-\int_\R \phi(x+\mathbf{e}+m_\phi)\mathcal{D}_2(\mathrm{d}x)})|\\
\leq& 2\bP(\mathbf{e}\notin(a,b])+2\bP(\mathcal{D}_2([-b,\infty))\geq k)+2k\vep.
\end{split}
\]
Note that $\bP(\mathcal{D}_2((z,\infty))<\infty)=1$ for $z\in\R$. Letting $\vep \to 0$ first, then $k\to\infty$, $a\to 0$ and $b\to\infty$, we complete the proof.
\end{proof}
\section{Proof of Theorem \ref{th2}}\label{proofofth2}
In this section, fix $t\in(0,1)$, we consider a two-speed branching random walk with reproduction law $\mathcal{L}_1$ and $\mathcal{L}_2$ in the slow regime. For convenience, we write $t_n$ for $\lfloor tn\rfloor$. More precisely, we assume that there exist $\theta_1^*,\theta_2^*>0$ such that $\theta_1^*<\theta_2^*$ and $\theta_i^*\kappa_i'(\theta_i^*)=\kappa_i(\theta_i^*)$, $i=1,2$.
Recall that $m_n=\kappa_1'(\theta_1^*)t_n+\kappa_2'(\theta_2^*)(n-t_n)-\frac{3}{2\theta^*_1}\log(t_n)-\frac{3}{2\theta^*_2}\log(n-t_n)$ and $\mathcal{E}_n=\sum_{|u|=n}\delta_{V^{(n)}(u)-m_n}.$ We first introduce an important proposition that shows the weak convergence of the Laplace functional of the extremal process $\mathcal{E}_n$. The proof of forthcoming proposition is a direct application of results by A\"id\'ekon\cite{aidekon2} and Madaule\cite{madaule}. Finally, we prove Theorem \ref{th2} by the Proposition.
\begin{proposition}\label{prop4}
Under the assumptions of Theorem \ref{th2}. For any $\phi\in C_\uparrow(\R)$, we have
\[\lim_{n\to\infty}\E(e^{-\mathcal{E}_n(\phi)})=\E(e^{-\mathcal{E}(\phi)}),\]
where $\mathcal{E}$ is defined in (\ref{eqth21}).
\end{proposition}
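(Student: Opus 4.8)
The plan is to condition on the configuration at the intermediate time $t_n$ and use the branching property to factorize $\E(e^{-\mathcal{E}_n(\phi)})$, turning the problem into a statement about the first-stage extremal process fed through the second-stage Laplace functional. The two analytic inputs are the convergence of the first-stage extremal process (Theorem \ref{th3} applied to $(V_1(u):u\in\T)$) and the uniform-in-shift convergence of the second-stage Laplace functional (Lemma \ref{lemslow2} applied to $(V_2(u):u\in\T)$).

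First I would write $m_n=m^{(1)}_{t_n}+m^{(2)}_{n-t_n}$, where $m^{(i)}_k:=\kappa_i'(\theta_i^*)k-\frac{3}{2\theta_i^*}\log k$, and note that, given $\F_{t_n}$, the subtrees rooted at the particles $|u|=t_n$ evolve as i.i.d. copies of the second-environment walk. Setting $F_m(x):=\E(e^{-\int_\R\phi(z+x)\,\mathcal{E}^{(2)}_m(\mathrm{d}z)})$, the branching property gives the exact identity
\[\E(e^{-\mathcal{E}_n(\phi)})=\E\Big(\prod_{|u|=t_n}F_{n-t_n}\big(V_1(u)-m^{(1)}_{t_n}\big)\Big)=\E\big(e^{-\mathcal{E}^{(1)}_{t_n}(G_n)}\big),\]
where $G_n:=-\log F_{n-t_n}\ge 0$ is continuous and non-decreasing since $\phi$ is. On the limit side, conditioning $\mathcal{E}=\sum_{i,j\ge 1}\delta_{e_i+e^{(i)}_j}$ on $\mathcal{E}^{(1)}$ and using independence of the decorations yields $\E(e^{-\mathcal{E}(\phi)})=\E(\prod_i F_\infty(e_i))=\E(e^{-\mathcal{E}^{(1)}(G)})$, with $F_\infty(x):=\E(e^{-\int_\R\phi(z+x)\mathcal{E}^{(2)}(\mathrm{d}z)})$ and $G:=-\log F_\infty$. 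It therefore suffices to prove $\mathcal{E}^{(1)}_{t_n}(G_n)\Rightarrow\mathcal{E}^{(1)}(G)$ and to apply the bounded continuous map $x\mapsto e^{-x}$.

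I would then split $\mathcal{E}^{(1)}_{t_n}(G_n)=\mathcal{E}^{(1)}_{t_n}(G)+\mathcal{E}^{(1)}_{t_n}(G_n-G)$. Lemma \ref{lemslow2} (its hypotheses being supplied by Lemma \ref{localiness} for $(V_2(u))$) gives $\sup_x|F_{n-t_n}(x)-F_\infty(x)|\to 0$; since the maxima of $\mathcal{E}^{(1)}_{t_n}$ are tight and $F_\infty$ is bounded away from $0$ on every half-line $(-\infty,C]$, this upgrades to $G_n\to G$ uniformly on $(-\infty,C]$. Combined with $\mathcal{E}^{(1)}_{t_n}\Rightarrow\mathcal{E}^{(1)}$ (Theorem \ref{th3}) and the a.s. local finiteness of $\mathcal{E}^{(1)}$ above a barrier level $-L$, the bulk part over $\{-L\le V_1(u)-m^{(1)}_{t_n}\le C\}$ is controlled: the number of such atoms is tight, $\sum_{-L\le x_u\le C}|G_n-G|(x_u)\to 0$ in probability, and $\mathcal{E}^{(1)}_{t_n}(G\,1_{[-L,C]})\Rightarrow\mathcal{E}^{(1)}(G\,1_{[-L,C]})$.

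The main obstacle is the left tail, namely showing
\[\lim_{L\to\infty}\limsup_{n\to\infty}\bP\Big(\sum_{|u|=t_n,\ V_1(u)-m^{(1)}_{t_n}<-L}G_n\big(V_1(u)-m^{(1)}_{t_n}\big)>\varepsilon\Big)=0.\]
A crude first moment is not enough here, because $\E(\sum_{|u|=t_n}e^{\theta_2^*(V_1(u)-m^{(1)}_{t_n})})$ grows exponentially (by convexity $\kappa_1(\theta_2^*)>\theta_2^*\kappa_1'(\theta_1^*)$ when $\theta_2^*>\theta_1^*$). The resolution is that extremal first-stage particles obey a ballot-type barrier, so I would decompose $\mathcal{E}^{(1)}_{t_n}$ into a barrier-respecting part and a remainder whose contribution is negligible by standard estimates. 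For the barrier-respecting part the expected occupation near level $x<0$ scales like the extremal intensity $e^{-\theta_1^* x}$ (up to polynomial factors), while the uniform second-stage first-moment bound yields $G_n(x)\le C(1+|x|)e^{\theta_2^* x}$ on the relevant range; hence the expected tail contribution is dominated by $\int_{-\infty}^{-L}P(|x|)\,e^{(\theta_2^*-\theta_1^*)x}\,\mathrm{d}x$ for some polynomial $P$, which is finite and tends to $0$ as $L\to\infty$ precisely because $\theta_1^*<\theta_2^*$. With the tail tamed, letting $n\to\infty$, then $C\to\infty$ and $L\to\infty$, gives $\mathcal{E}^{(1)}_{t_n}(G_n)\Rightarrow\mathcal{E}^{(1)}(G)$ and completes the proof.
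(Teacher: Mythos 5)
Your proposal is correct and follows essentially the same route as the paper: the same Markov-property factorization at time $t_n$, the same bulk treatment combining uniform convergence of the second-stage Laplace functional (Lemma \ref{localiness} and Lemma \ref{lemslow2}) with weak convergence of $\mathcal{E}^{(1)}_{t_n}$ (Theorem \ref{th3}), and the same tail control --- your barrier-respecting first-moment bound exploiting $\theta_1^*<\theta_2^*$ is precisely the content of the paper's Lemma \ref{lemslow}, proved via Mallein's ballot-type estimates. The only cosmetic difference is that the paper truncates to a symmetric window $\{|V_1(u)-m^{(1)}_{t_n}|<B\}$ and handles the resulting discontinuous test function $g_{\phi,B}$ by monotone sandwiching with compactly supported continuous functions (Kallenberg's lemma), whereas you split the window asymmetrically into $[-L,C]$ and treat the upper tail by tightness of the maximum.
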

Before the proof of Proposition \ref{prop4}, we introduce an important lemma that give the probability estimate of particles' trajectories. We define $m^{(1)}_n:=\kappa_1'(\theta_1^*)n-\frac{3}{2\theta_1^*}\log n$ and $m_n^{(2)}:=\kappa_2'(\theta_2^*)n-\frac{3}{2\theta_2^*}\log n$. It is not hard to see that $m_n=m^{(1)}_{t_n}+m^{(2)}_{n-t_n}$. Recall that for $i\in\{1,2\}$, $(V_i(u):u\in\T)$ is a branching random walk with reproduction law $\mathcal{L}_i$.
\begin{lemma}\label{lemslow}
Under the assumptions of Theorem \ref{th2}, for any $\vep>0$ and $y\in\R$, there exists $B>0$ such that
\[\lim_{B\to\infty}\sup_{n\in\N}\bP(\exists |u|=n : V^{(n)}(u)>m_n+y,\ |V^{(n)}(u_{t_n})-m^{(1)}_{t_n}|\geq B)=0.\]
\end{lemma}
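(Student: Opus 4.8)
The plan is to follow the first-moment strategy of Lemma~\ref{lemfast}, but the obstruction is that the localization now lives at the $O(1)$ scale around the first-stage maximum $m^{(1)}_{t_n}$ rather than at the diffusive scale around the mean, so a bare first moment will overcount and has to be corrected by a barrier. First I would apply the Markov property at time $t_n$, condition on the first-stage configuration, and split the event according to the position of the time-$t_n$ ancestor $u_{t_n}$, bounding the probability by the sum of a \emph{high part}, where $V_1(u_{t_n})>m^{(1)}_{t_n}+B$, and a \emph{low part}, where $V_1(u_{t_n})<m^{(1)}_{t_n}-B$. The high part is immediate: the existence of such a particle forces $M_{1,t_n}:=\max_{|w|=t_n}V_1(w)>m^{(1)}_{t_n}+B$, and since $t_n\to\infty$ the law of $M_{1,t_n}-m^{(1)}_{t_n}$ converges by \eqref{ai2} applied to $\mathcal{L}_1$, hence is tight; thus $\sup_n\bP(M_{1,t_n}-m^{(1)}_{t_n}>B)\to0$ as $B\to\infty$, the finitely many $n$ with $t_n$ small being harmless.

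The low part is the crux. Using independence of the second-stage subtrees and the identity $m_n=m^{(1)}_{t_n}+m^{(2)}_{n-t_n}$, I would bound it by a union bound over first-stage particles,
\[
\bP(\text{low part})\le \E\Big(\sum_{|w|=t_n}\mathbf 1_{\{V_1(w)\le m^{(1)}_{t_n}-B\}}\,\bP\big(M_{2,n-t_n}>m^{(2)}_{n-t_n}+z_w\big)\Big),\qquad z_w:=m^{(1)}_{t_n}+y-V_1(w),
\]
where $z_w$ is the overshoot the second stage must realize. The second-stage factor is controlled by the uniform critical right-tail estimate $\bP(M_{2,m}-m^{(2)}_m>z)\le C(1+z)e^{-\theta^*_2 z}$ (as in \cite{aidekon2}, \cite{madaule}). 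The difficulty is the remaining first-stage average: since $\theta^*_2>\theta^*_1$ and $\theta^*_1$ is critical for $\mathcal{L}_1$, a plain many-to-one computation reproduces the true extremal intensity \emph{inflated by a factor of order $t_n$}, because the unconditioned first moment overcounts low ancestors whose descendants reach the top.

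To remove this factor I would restrict to ancestral trajectories staying below an appropriate barrier and treat the complement separately. On the barrier event the standard truncated first-moment estimate for a critical branching random walk gives, uniformly in $n$, $\E\big(\#\{|w|=t_n:V_1(w)-m^{(1)}_{t_n}\in[x,x+1]\ \text{below the barrier}\}\big)\le C(1+|x|)e^{-\theta^*_1 x}$. Combining this with the second-stage tail (for which $z_w=|x|+y$ at level $x=V_1(w)-m^{(1)}_{t_n}\le -B$), the contribution of level $x$ is of order $(1+|x|)(1+|x|+y)\,e^{-(\theta^*_2-\theta^*_1)|x|}e^{-\theta^*_2 y}$, and summing over $x\le -B$ yields $\le C(1+B)^2 e^{-(\theta^*_2-\theta^*_1)B}e^{-\theta^*_2 y}\to0$ as $B\to\infty$, uniformly in $n$; the gap $\theta^*_2-\theta^*_1>0$ is exactly what produces the decay. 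The contribution of trajectories that cross the barrier yet still reach $m_n+y$ I would show negligible by a separate first-moment-over-the-barrier-crossing bound, as is standard in this setting.

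The main obstacle is this last step: securing the barrier-truncated first-moment estimate with the correct order $(1+|x|)e^{-\theta^*_1 x}$ \emph{uniformly in $n=t_n$} (which rests on Stone's local limit theorem \cite{stone} together with ballot-type estimates for a walk kept below a line), and checking that the over-barrier part is uniformly negligible. Everything else is bookkeeping; the conceptual content is that in the slow regime the extremal particles descend from first-stage ancestors within $O(1)$ of $m^{(1)}_{t_n}$, and the separation $\theta^*_1<\theta^*_2$ penalizes any ancestor lying $B$ below by the factor $e^{-(\theta^*_2-\theta^*_1)B}$.
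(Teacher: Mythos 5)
Your proposal is correct and follows essentially the same route as the paper: split at time $t_n$ into ancestors above/below $m^{(1)}_{t_n}\pm B$, dispose of the high part by the first-stage maximum's tail, and control the low part by a barrier-truncated first moment of order $(1+|x|)e^{\theta_1^*|x|}$ per unit level combined with the second-stage tail $e^{-\theta_2^*(|x|+y)}$, the gap $\theta_2^*-\theta_1^*>0$ giving the summable geometric decay (the paper implements the barrier via Lemmas 4.3--4.5 and Theorem 4.2 of Mallein's paper, handling the over-barrier event by bounding the probability that any first-stage particle crosses it). The only cosmetic difference is that you invoke tightness for the high part where the paper uses the quantitative bound $C(1+B)e^{-\theta_1^*B}$; both suffice.
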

\begin{proof}
Note that
\begin{equation}\label{q4}
\begin{split}
&\bP(\exists |u|=n : V^{(n)}(u)>m_n+y,\ |V^{(n)}(u_{t_n})-m^{(1)}_{t_n}|\geq B)\\
\leq &\bP(\exists |u|=n : V^{(n)}(u)>m_n+y,\ V^{(n)}(u_{t_n})-m^{(1)}_{t_n}\geq B)\\
&+\bP(\exists |u|=n : V^{(n)}(u)>m_n+y,\ V^{(n)}(u_{t_n})-m^{(1)}_{t_n}\leq -B).
\end{split}
\end{equation}
It is not hard to see that there exists $C>0$ such that
\begin{equation}\label{q5}
\begin{split}
&\bP(\exists |u|=n : V^{(n)}(u)>m_n+y,\ V^{(n)}(u_{t_n})-m^{(1)}_{t_n}\geq B)\\
\leq &\bP(\exists |u|=t_n : \bar{V}(u)-m^{(1)}_{t_n}\geq B) \leq C(1+B)e^{-\theta_1^* B},
\end{split}
\end{equation}
where the second inequality follows from Theorem 4.2 in \cite{bastien}.

For $\lambda\in (0,1)$ and $1\leq k\leq t_n$, let
\[f_k^{(n)}:=\kappa_1'(\theta_1^*)k1_{\{k\leq \lambda t_n\}}+\left[\kappa_1'(\theta_1^*)k-\frac{3}{2\theta_1^*}\log \left(\frac{t_n}{t_n-k+1}\right)\right]1_{\{k> \lambda t_n\}}.\]
For $B>0$, we denote by $J^\lambda_{B}(n)$ the following collection of particles:
\[\left\{u\in\T:|u|=t_n,V^{(n)}(u_k)\leq f_k^{(n)}+B,\forall 1\leq k\leq t_n\right\},\]

Note that
\begin{equation}\label{q1}
\begin{split}
&\bP(\exists |u|=n : V^{(n)}(u)>m_n+y,\ V^{(n)}(u_{t_n})-m^{(1)}_{t_n}\leq -B)\\
\leq& \bP(\exists |u|=t_n : V^{(n)}(u)-m^{(1)}_{t_n}\leq -B, \max_{v>u,|v|=n}V^{(n)}(v)>m_n+y)\\
\leq &\bP(\exists |u|=t_n : V^{(n)}(u)-m^{(1)}_{t_n}\leq -B, \max_{v>u,|v|=n}V^{(n)}(v)>m_n+y, u\in J_B^\lambda(n))\\
&+\bP(\exists |u|=t_n: u\notin J^\lambda_{B}(n)).
\end{split}
\end{equation}
By Lemma 4.3 and Lemma 4.4 in \cite{bastien}, there exists some constant $C>0$, independent of $B,t$, such that
\begin{equation}\label{q2}
\bP(\exists |u|=t_n: u\notin J^\lambda_{B}(n))\leq C(1+ B)e^{-\theta_1^*B}.
\end{equation}

  We define
\[Z_n(B,i):=\sum_{|u|=t_n}1_{\{V_1(u)-m^{(1)}_{t_n}\in(-B-i-1,-B-i],V_1(u_k)-f_k^{(n)}\leq  B,\forall 1\leq k \leq t_n\}}.\]
 Note that
\[
\begin{split}
&\bP(\exists |u|=t_n : V^{(n)}(u)-m^{(1)}_{t_n}\leq -B, \max_{v>u,|v|=n}V^{(n)}(v)>m_n+y, u\in J^\lambda_{B}(n))\\
\leq &\sum_{i=0}^\infty\E\left(\sum_{|u|=t_n}1_{\{V^{(n)}(u)-m^{(1)}_{t_n}\in(-B-i-1,-B-i],u\in J^\lambda_{B}(n), \max_{v>u,|v|=n}V^{(n)}(v)>m_n+y\}}\right)\\
\leq &\sum_{i=0}^\infty \E(Z_n(B,i))\bP(M_{2,n-t_n}>m^{(2)}_{n-t_n}+y+B+i),
\end{split}
\]
where the second equality follows from Markov property at time $t_n$ and $M_{2,n}:=\max_{|u|=n}V_2(u)$.
By Lemma 4.5 in \cite{bastien}, there exists $C>0$ such that
\[
\E(Z_n(B,i))\leq Ce^{\theta_1^* (B+i+1)}(2B+i+2)(1+B)
\]
 On the other hand, by Theorem 4.2 in \cite{bastien}, there exists $C>0$ such that for any $B>0$ and $i\geq 0$,
\[\bP(M_{2,n-t_n}>m^{(2)}_{n-t_n}+y+B+i)\leq Ce^{-\theta_2^*(y+B+i)}.\]
Therefore, we conclude that
\begin{equation}\label{q3}
\begin{split}
&\bP(\exists |u|=tn : V^{(n)}(u)-m^{(1)}_{t_n}\leq -B, \max_{v>u,|v|=n}V^{(n)}(v)>m_n+y, u\in J^\lambda_{B}(n))\\
\leq &C(1+ B)e^{-\theta_2^* y+\theta_1^*}\sum_{i=0}^\infty (2B+i+2)e^{-(\theta_2^*-\theta_1^*)(B+i)}\\
\leq &C(1+B^2) e^{-(\theta_2^*-\theta_1^*)B}
\end{split}
\end{equation}
where the second equality follows from that $\sum_{i=0}^\infty ie^{-(\theta_2^*-\theta_1^*)i}<\infty$ because $\theta_1^*<\theta_2^*$.

With (\ref{q4})-(\ref{q3}), letting $B\to\infty$, we completes the proof.
\end{proof}
Now we start to prove Proposition \ref{prop4}.
\begin{proof}[Proof of Proposition \ref{prop4}]
Define $\mathcal{H}(n,B):=\{|u|=tn: |V^{(n)}(u)-m^{(1)}_{t_n} |<B\}$.
For any $n\in\N$, $B>0$ and $\phi\in C_\uparrow(\R)$, note that
\[
\begin{split}
&|\E(e^{-\mathcal{E}_n(\phi)})-\E(e^{-\sum_{u\in\mathcal{H}(n,B)}\sum_{v\succ u,|v|=n}\phi(V^{(n)}(v)-m_n)})|\\
\leq &\E(|1-e^{-\sum_{|u|=t_n,u\notin\mathcal{H}(n,B)}\sum_{v\succ u,|v|=n}\phi(V^{(n)}(v)-m_n)}|)\\
\leq &\bP(\exists |u|=n,\ V^{(n)}(u)-m_n> m_\phi,\ |V^{(n)}(u_{t_n})-\bar{m}_{t_n}|\geq B),
\end{split}
\]
where $m_\phi=\inf\{x\in\R:\phi(x)\neq 0\}.$
Thus, by Lemma \ref{lemslow}, we only need to compute the expectation
\[\E(e^{-\sum_{u\in\mathcal{H}(n,B)}\sum_{v\succ u,|v|=n}\phi(V^{(n)}(v)-m_n)}).\]
By Theorem \ref{th3}, we denote by $\mathcal{E}_n^{(2)}:=\sum_{|u|=n}\delta_{V_2(u)-m^{(2)}_n}$ the extremal process of $(V_2(u):u\in\T)$ and $\mathcal{E}^{(2)}$ the limit in law of $\mathcal{E}^{(2)}_n$.
By Markov property at time $t_n$, we have
\[
\begin{split}
&\E(e^{-\sum_{u\in \mathcal{H}(n,B)}\sum_{v\succ u,|v|=n}\phi(V^{(n)}(v)-m_n)})\\
=&\E\left(\prod_{|u|=t_n}\E_{V_1(u)-m^{(1)}_{t_n}}(e^{-\mathcal{E}^{(2)}_{n-t_n}(\phi)})1_{\{|V_1(u)-m^{(1)}_{t_n}|<B\}}\right)\\
=&\E\left(e^{-\sum_{|u|=t_n}[-\log\E_{V_1(u)-m^{(1)}_{t_n}}(e^{-\mathcal{E}^{(2)}_{n-t_n}(\phi)})]1_{\{|V_1(u)-m^{(1)}_{t_n}|<B\}}}\right).
\end{split}
\]
By direct computation, there exists $c_\phi\in\R$ such that for any $x\in\R$, $\E(e^{-\sum_{\ell\in\mathcal{E}^{(2)}}\phi(x+\ell)})=w^{(2)}(c_\phi-x)$, where $w^{(2)}(x):=\bP(\max \mathcal{E}^{(2)}\leq x)$. Using Lemma \ref{localiness} and Lemma \ref{lemslow2}, we know that
\[\lim_{n\to\infty}\sup_{|x|<B}\left|\frac{\E_x(e^{-\mathcal{E}^{(2)}_n(\phi)})}{\E(e^{-\sum_{\ell\in \mathcal{E}^{(2)}}\phi(x+\ell)})}-1\right|=0.\]
Therefore, we only need to consider how to estimate $\E(e^{-\mathcal{E}^{(1)}_{t_n}(g_{\phi,B})})$,
where
\[\mathcal{E}^{(1)}_{n}=\sum_{|u|=n}\delta_{V_1(u)-m^{(1)}_{n}}\quad\text{and}\quad g_{\phi,B}(x):=-\log(\E(e^{-\sum_{\ell\in \mathcal{E}^{(2)}}\phi(x+\ell)}))1_{\{|x|<B\}}.\]
We denote by $C_c(\R)$ the collection of continuous functions with compact support. Because $\inf_{|x|\leq B}\E(e^{-\sum_{\ell\in \mathcal{E}^{(2)}}\phi(x+\ell)})>0$, there exist two non-negative function sequences $(\bar{g}_{\phi,B,k})_{k\geq 1}$ and $(\underline{g}_{\phi,B,k})_{k\geq 1}$ such that for any $k\geq 1$, $\bar{g}_{\phi,B,k},\underline{g}_{\phi,B,k}\in C_c(\R)$ and for any $x\in\R$,
\[\underline{g}_{\phi,B,k}(x)\nearrow g_{\phi,B}(x)\quad\text{and}\quad \bar{g}_{\phi,B,k}(x)\searrow \bar{g}_{\phi,B}(x),\quad\text{as}\quad k\to\infty,\]
where $\bar{g}_{\phi,B}(x):=-(\log \E(e^{-\sum_{\ell\in\mathcal{E}^{(2)}}\phi(\ell+x)}))1_{\{|x|\leq B\}}$, $x\in\R$.
Let $\mathcal{E}^{(1)}$ be the limit in law of $\mathcal{E}^{(1)}_n$. According to Theorem \ref{th3} and Lemma 5.1 in \cite{kallenberg}, for any $k\geq 1$, we have
\begin{equation}\label{eq5.2}
\E(e^{-\mathcal{E}^{(1)}(\bar{g}_{\phi,B,k})})\leq \liminf_{n\to\infty}\E(e^{-\mathcal{E}^{(1)}_{t_n}(g_{\phi,B})})\leq \limsup_{n\to\infty}\E(e^{-\mathcal{E}^{(1)}_{t_n}(g_{\phi,B})})\leq \E(e^{-\mathcal{E}^{(1)}(\underline{g}_{\phi,B,k})}).
\end{equation}
By dominated convergence theorem and monotone convergence theorem, letting $k\to\infty$ first and then $B\to\infty$, we conclude that
\[\lim_{n\to\infty}\E(e^{-\mathcal{E}_n(\phi)})=\E(e^{-\mathcal{E}^{(1)}(g_\phi)}),\]
where $g_\phi(x):=-\log(\E(e^{-\sum_{\ell\in \mathcal{E}^{(2)}}\phi(x+\ell)}))$, $x\in\R$.
Let $(\mathcal{E}^{(2,i)}:i\geq 1)$ be independent copies of $\mathcal{E}^{(2)}$ and be independent of $\mathcal{E}^{(1)}$. Then it is not hard to check that
\[\E(e^{-\mathcal{E}^{(1)}(g_\phi)})=\E(e^{-\mathcal{E}(\phi)}),\]
where $\mathcal{E}:=\sum_{i,j\geq 1}\delta_{e_i+e_j^{(i)}}$ and $(e_i:i\geq 1)$ and $(e_j^{(i)}:j\geq 1)$ are the atoms of $\mathcal{E}^{(1)}$ and $\mathcal{E}^{(2,i)}$ respectively.
\end{proof}
In the following lemma, we prove that the maximum of $\mathcal{E}$ is almost surely finite.
\begin{lemma}\label{lemslow1}
Under the assumptions of Theorem \ref{th2}, we have $\bP(\max \mathcal{E}<\infty)=1$.
\end{lemma}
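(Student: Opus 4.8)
The plan is to avoid the explicit decorated-Poisson description of $\mathcal{E}$ in (\ref{eqth21}) and instead exploit the Laplace-functional convergence already established in Proposition \ref{prop4}, together with the tightness of the centred maximum $M_n^{(n)}-m_n$ proved by Mallein \cite{bastien} in the slow regime. The motivation is that $\max\mathcal{E}$ is not a continuous functional for the vague topology, so one cannot read off its finiteness directly from $\mathcal{E}_n\Rightarrow\mathcal{E}$; the Laplace functional supplies a continuous surrogate. Concretely, it suffices to show $\bP(\max\mathcal{E}>y)\to 0$ as $y\to\infty$, since $\{\max\mathcal{E}=\infty\}=\bigcap_{y}\{\max\mathcal{E}>y\}$.

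First I would fix $y\in\R$ and $\alpha>0$ and choose a test function $\phi\in C_\uparrow(\R)$ that is continuous and non-decreasing, with $\phi\equiv 0$ on $(-\infty,y]$ and $\phi\equiv\alpha$ on $[y+1,\infty)$; this $\phi$ belongs to $C_\uparrow(\R)$ because its support $[y,\infty)$ has finite left bound and it is eventually equal to the positive constant $\alpha$. On $\{\max\mathcal{E}>y+1\}$ there is an atom of $\mathcal{E}$ above $y+1$, so $\mathcal{E}(\phi)\geq\alpha$ and hence
\[
\E\bigl(1-e^{-\mathcal{E}(\phi)}\bigr)\geq (1-e^{-\alpha})\,\bP(\max\mathcal{E}>y+1).
\]
On the pre-limit side $\phi>0$ only on $(y,\infty)$, so $1-e^{-\mathcal{E}_n(\phi)}\leq 1_{\{\mathcal{E}_n(\phi)>0\}}\leq 1_{\{\max\mathcal{E}_n>y\}}$, which gives $\E(1-e^{-\mathcal{E}_n(\phi)})\leq\bP(M_n^{(n)}-m_n>y)$ (recall $\max\mathcal{E}_n=M_n^{(n)}-m_n$). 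Proposition \ref{prop4} lets me pass to the limit, $\E(1-e^{-\mathcal{E}(\phi)})=\lim_{n\to\infty}\E(1-e^{-\mathcal{E}_n(\phi)})$, so that
\[
\bP(\max\mathcal{E}>y+1)\leq \frac{1}{1-e^{-\alpha}}\,\limsup_{n\to\infty}\bP(M_n^{(n)}-m_n>y).
\]

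It then remains to let $y\to\infty$. By Mallein's tightness of the centred maximum in the slow regime \cite{bastien}, the right-hand side tends to $0$; here one should first observe that the centring $m_n$ of Theorem \ref{th2} differs from the one used in \cite{bastien} only by a bounded sequence, since $\log t_n=\log n+O(1)$ and $\log(n-t_n)=\log n+O(1)$, so tightness transfers unchanged. This yields $\bP(\max\mathcal{E}>y)\to 0$ and hence $\bP(\max\mathcal{E}<\infty)=1$. The only delicate point is engineering the two-sided squeeze: one must keep $\phi$ admissible for Proposition \ref{prop4} while simultaneously dominating $1-e^{-\mathcal{E}_n(\phi)}$ by $1_{\{\max\mathcal{E}_n>y\}}$ and bounding $\bP(\max\mathcal{E}>y+1)$ from below by $\E(1-e^{-\mathcal{E}(\phi)})$; all the quantitative work is then outsourced to the external tightness estimate. (A more hands-on alternative would start from (\ref{eqth21}), write $\max\mathcal{E}=\sup_i(e_i+\max\mathcal{E}^{(2,i)})$, and combine the tail bound $\bP(\max\mathcal{E}^{(2)}>x)\leq Ce^{-\theta_2^* x}$ with the a.s.\ finiteness of $\sum_i e^{\theta_2^* e_i}$, which is where the hypothesis $\theta_1^*<\theta_2^*$ enters decisively; I avoid this route because it additionally requires an integrability control on the $\mathcal{L}_1$-decoration that the Laplace-functional argument sidesteps entirely.)
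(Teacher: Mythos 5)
Your argument is correct, and it takes a genuinely different route from the paper's. The paper proves the lemma by exactly the ``hands-on alternative'' you sketch in your closing parenthesis: starting from the representation (\ref{eqth21}) and the independence of $\mathcal{E}^{(1)}$ and $(\mathcal{E}^{(2,i)})_{i\geq 1}$, it writes $\bP(\max\mathcal{E}\leq y)=\E\bigl(\prod_{i}w^{(2)}(y-e_i)\bigr)$ with $w^{(2)}(x)=\bP(\max\mathcal{E}^{(2)}\leq x)$, bounds $-\log w^{(2)}(x)\leq Ce^{-\bar{\theta}x}$ for $x\geq 0$ and any $\bar{\theta}\in(\theta_1^*,\theta_2^*)$ via A\"id\'ekon's tail estimate $1-w^{(2)}(x)\leq C(1+x)e^{-\theta_2^*x}$, and then invokes Theorem 2.3 of \cite{madaule}, which gives $\bP(\mathcal{E}^{(1)}(e^{\bar{\theta}\cdot})<\infty)=1$ for $\bar{\theta}>\theta_1^*$ --- this is precisely the integrability control whose availability you doubted, so that route does close, with dominated convergence finishing the proof. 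Your main proof instead needs nothing about the structure of $\mathcal{E}$: it combines the Laplace-functional convergence of Proposition \ref{prop4} (no circularity, since that proposition is proved without this lemma) with the upper-tail half of Mallein's tightness, transferred through the sandwich $(1-e^{-\alpha})\bP(\max\mathcal{E}>y+1)\leq\E(1-e^{-\mathcal{E}(\phi)})=\lim_{n\to\infty}\E(1-e^{-\mathcal{E}_n(\phi)})\leq\limsup_{n\to\infty}\bP(M_n^{(n)}-m_n>y)$, and your observation that the centring of Theorem \ref{th2} differs from that of \cite{bastien} by a bounded sequence is the right way to import the tightness. What your approach buys: robustness --- it would work verbatim in the mean regime or for any limit obtained through a Laplace-functional statement like Proposition \ref{prop4}, and it only uses the easy (upper-tail) half of tightness. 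What the paper's approach buys: it stays entirely within the limiting objects, makes explicit where the hypothesis $\theta_1^*<\theta_2^*$ enters (the room to choose $\bar{\theta}$ strictly in between), and yields a quantitative lower bound on $\bP(\max\mathcal{E}\leq y)$ rather than bare finiteness; note also that depending on \cite{bastien} is no extra cost for the paper, since Lemma \ref{lemslow} already rests on Mallein's estimates.
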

\begin{proof}
Recall that $w^{(2)}(x)=\bP(\max \mathcal{E}^{(2)}\leq x),$ $x\in\R$. Note that for $y\in\R$,
\begin{equation}\label{lb}
\begin{split}
\bP(\max \mathcal{E}\leq y)&=\bP(\forall e_i\in \mathcal{E}^{(1)}, e_i+\max\mathcal{E}^{(2,i)}\leq y)\\
&=\E\left(\prod_{e_i\in\mathcal{E}^{(1)}}w^{(2)}(y-e_i)\right)\\
&\geq \E(e^{-\mathcal{E}^{(1)}(-\log w_2(y-\cdot))}1_{\{\max\mathcal{E}^{(1)}\leq y\}}),
\end{split}
\end{equation}
where the second equality follows from the independence between $\mathcal{E}^{(1)}$ and $(\mathcal{E}^{(2,i)}:i\geq 1)$.
By Proposition 1.3 in \cite{aidekon2}, there exists $C>0$ such that for any $x\geq 0$,
\[1-w^{(2)}(x)\leq C(1+x)e^{-\theta_2^* x}, \]
which implies that
\[
-\log w^{(2)}(x)\leq C(1+x)e^{-\theta_2^* x}.
\]
Therefore, given $\bar{\theta}\in(\theta_1^*,\theta_2^*)$, we conclude that for $x\geq 0$,
\[
-\log w^{(2)}(x)\leq Ce^{-\bar{\theta}x},
\]
which implies that
\begin{equation}\label{lb1}
\E(e^{-\mathcal{E}^{(1)}(-\log w_2(y-\cdot))}1_{\{\max\mathcal{E}^{(1)}\leq y\}})\geq\E(e^{-Ce^{-\bar{\theta}y\mathcal{E}^{(1)}(e^{\bar{\theta}\cdot})}}1_{\{\max\mathcal{E}^{(1)}\leq y\}}).
\end{equation}
By Theorem 2.3 in \cite{madaule}, for $\theta>\theta_1^*$, $\bP(\mathcal{E}^{(1)}(e^{\theta \cdot})<\infty)=1.$ Note that $\max\mathcal{E}^{(1)}<\infty$ a.s., combined with (\ref{lb}) and (\ref{lb1}), according to dominated convergence theorem, we conclude that
\[\lim_{y\to\infty}\bP(\max\mathcal{E}\leq y)=1,\]
which completes the proof.
\end{proof}
Now we prove Theorem \ref{th2}.
\begin{proof}[Proof of Theorem \ref{th2}]
According to (\ref{lb}), for $y\in\R$
\begin{equation}\label{eq6.0}
\bP(\max\mathcal{E}\leq y)=\E(e^{-\mathcal{E}^{(1)}(w_y)}),
\end{equation}
where $w_y(x):=-\log w^{(2)}(y-x)$. Let $Z^{(1)}$ be the limit of the derivative martingale of $(V_1(u):u\in\T)$. According to Theorem \ref{th3} and (\ref{lapf}), we know that there exist a point process $\mathcal{D}^{(1)}$ on $(-\infty,0]$ and some constant $C^*_1>0$ such that
\[
\begin{split}
\E(e^{-\mathcal{E}^{(1)}(w_y)})&=\E(e^{-C^*_1\theta_1^*Z^{(1)}\int_{\R}(1-\E(e^{-\mathcal{D}^{(1)}(w_y(\cdot+s))}))e^{-\theta_1^* s}\mathrm{d}s})\\
&=\E(e^{-C_1^*\theta_1^*Z^{(1)}[\int_{\R}(1-\E(e^{-\mathcal{D}^{(1)}(w_s)}))e^{\theta_1^* s}\mathrm{d}s]e^{-\theta_1^* y}})=\E(e^{-C_{\mathrm{s}}Z^{(1)}e^{-\theta_1^* y}}),
\end{split}
\]
where the second equality follows from a change of variables $s\to y-s$ and
\begin{equation}\label{eq6.1}
C_{\mathrm{s}}:=C_1^*\theta_1^*\int_{\R}(1-\E(e^{-\mathcal{D}^{(1)}(w_s)}))e^{\theta_1^* s}\mathrm{d}s\in(0,\infty),
\end{equation}
where $C_{\mathrm{s}}<\infty$ follows from Lemma \ref{lemslow1}. By direct computation, for any $\phi\in C_\uparrow(\R)$, there exists $c_{\phi}\in\R$ such that for any $y\in\R$, $\E(e^{-\mathcal{E}^{(2)}(\phi(\cdot+y))})=w^{(2)}(c_{\phi}-y)$, which implies that for $z>0$,
\[\E(e^{-z\mathcal{E}(\phi)})=\E(e^{-\mathcal{E}^{(1)}(w_{c_{z\phi}})})=\E(e^{-C_{\mathrm{s}}Z^{(1)}e^{-\theta_1^* c_{z\phi}}}).\]
As $\lim_{z\to0+}\E(e^{-z\mathcal{E}(\phi)})=1$, we have $\lim_{z\to 0+}c_{z\phi}=\infty$, which yields that
\[\lim_{z\to 0+}\E(e^{-z\mathcal{E}(\phi)})=1,\]
which yields that for any $y\in\R$, $\bP(\mathcal{E}((y,\infty))<\infty)=1$. According to Lemma 4.4 in \cite{berestycki} and Proposition \ref{prop4}, we conclude that $(M_n^{(n)}-m_n, \mathcal{E}_n)$ converges in law to $(\max\mathcal{E},\mathcal{E})$. Moreover, by Theorem 10 in \cite{subag}, we conclude that $\mathcal{E}$ can be rewritten as a randomly shifted decorated Poisson point process with some decoration $\mathcal{D}'$ such that $\max\mathcal{D}'=0$ almost surely.
\end{proof}

\noindent\textbf{Acknowledgements:}
I wish to thank my supervisor Bastien Mallein for introducing me to this subject and constantly finding the time for useful discussions and advice. I has received financial support from China Scholarship Council (No.202306040031) and the CNRS through the MITI interdisciplinary program 80PRIME GEX-MBB.

\end{document}